\DeclareSymbolFont{script}{U}{eus}{m}{n}
\DeclareMathSymbol{\Wedge}{0}{script}{"5E}
\DeclareMathAlphabet{\mathrmsl}{OT1}{cmr}{m}{sl}
\newcommand{\R}{\mathbb R}
\newcommand{\wt}[1]{\widetilde{#1}}
\newcommand{\weg}[1]{}
\newcommand{\D}{\partial}
\newcommand{\rank}{\operatorname{rank}}
\newcommand{\const}{\mathrm{const}}
\theoremstyle{plain}
\newtheorem{thm}{Theorem}[section]
\newtheorem*{thm*}{Theorem}
\newtheorem{lemma}[thm]{Lemma}
\newtheorem{proposition}[thm]{Proposition}
\newtheorem{cor}[thm]{Corollary}
\theoremstyle{definition}
\newtheorem{defn}{Definition}[section]
\theoremstyle{remark}
\newtheorem{rem}{Remark}[section]
\title[Symplectic invariants for parabolic orbits and cusp singularities]{Symplectic invariants for parabolic orbits and cusp singularities of integrable systems with two degrees of freedom}
\author{Alexey Bolsinov, Lorenzo Guglielmi and Elena Kudryavtseva}
\address{Department of Mathematical Sciences,
 Loughborough University,
 LE11 3TU, UK  and  Faculty of Mechanics and Mathematics, Moscow State University, Moscow 119991, Russia}
 \email{A.Bolsinov@lboro.ac.uk} 
\address{SISSA, Via Bonomea, 265 -- 34136 Trieste, ITALY}
\email{lgugliel@sissa.it}
\address{Faculty of Mechanics and Mathematics, Moscow State University, Moscow 119991, Russia}
\email{eakudr@mech.math.msu.su}
\begin{document}
\maketitle

\begin{abstract}
We discuss normal forms and symplectic invariants of parabolic orbits and cuspidal tori in integrable Hamiltonian systems with two degrees of freedom.  Such singularities appear in many integrable systems in geometry and mathematical physics and can be considered as the simplest example of degenerate singularities. We also suggest some new techniques which apparently  can be used for studying symplectic invariants of degenerate singularities of more general type. 
\end{abstract}


\section{Introduction} \label {sec:1}

An integrable Hamiltonian system on a symplectic manifold $(M^{2n}, \Omega)$ is defined by $n$ pairwise commuting functions $F_1, \dots, F_n$ which are independent on $M^{2n}$ almost everywhere.  We will consider the case  $n=2$ and denote such a pair of commuting functions by $H$ and $F$  ($H$ is usually considered as the Hamiltonian and $F$ as an additional first integral).   Under the above assumptions,  on $M^4$  we can introduce the structure of a singular Lagrangian fibration whose fibers are, by definition, common level surfaces  $\mathcal L_{h,f} = \{ H = h, \ F=f\}$,   $(h,f)\in \R^2$  (or their connected components).  We will assume that all the fibers are compact  (unless we study local properties of a system). The functions  $H$ and $F$ also define a Hamiltonian  $\R^2$-action on $M^4$.

According to Liouville theorem,   regular compact connected fibers are 2-dimensional Lagrangian tori of dimension 2  which coincide with orbits of the $\R^2$-action.    We say that a fiber $\mathcal L_{h,f}$ is singular if it contains a singular point, i.e., a point $P$ such that $dH(P)$ and $dF(P)$  are linearly dependent.  Equivalently, we may say that  $\mathcal L_{h,f}$ is singular if it contains an orbit of a non-maximal dimension, i.e., $1$ or $0$.  A general problem of the theory of singularities of integrable systems is to describe the topology of singular fibers and their saturated neighborhoods (similarly for singular orbits). Notice that the fact that $F$ and $H$ commute makes this theory rather different as compared to the classical  singularity theory for smooth maps.  

Saying ``describe''  we may mean at least three different settings:  topological, smooth and symplectic.  For instance, saying that two given singularities   (points, orbits or fibers)  are symplectically equivalent  we mean the existence of a fiberwise symplectomorphism between their neighborhoods.  Throughout the paper, in addition we will assume that all the objects we are working with are real (or complex) analytic.

In this paper we discuss just one particular type of singularitites,  namely parabolic orbits and cuspidal tori (speaking informally,  a cuspidal torus is  a compact singular fiber that contains one parabolic orbit and no other singular points). 

Recall that typical  (non-degenerate)  singular orbits in integrable Hamiltonian systems can be of two different types: elliptic and hyperbolic.  
In integrable systems of two degrees of freedom, we may  very often  observe a  transition from {\it elliptic} to {\it hyperbolic} in a smooth one-parameter family of singular orbits.  At the very moment of transition,  the orbit becomes degenerate and of parabolic type.  This scenario is rather natural and  {\it parabolic} can be viewed  as the simplest possible type of degenerate singularities.
 
 Another important property of parabolic orbits is their stability  under small integrable perturbations (this follows from \cite[Sec. 6.1, 6.6, 7.3]{AVG}).  This is one of the reasons why such orbits can be observed in many examples  of integrable Hamiltonian systems:    Kovalevskaya top \cite{BolsFomRichter}, other integrable cases in rigid body dynamics including Steklov case, Clebsch case, Goryachev--Chaplygin--Sretenskii case, Zhukovskii case, Rubanovskii case and Manakov top on so(4) \cite{BolsFomBook}, as well as systems invariant w.r.t. rotations \cite{Kantonistova}, \cite{KozlovOshemkov},  see also examples discussed in \cite{Efstathiou}, \cite{DullinIvanov2}. Unlike non-degenerate singularities, however, in the literature on topology and singularities of integrable systems there are only few papers devoted to degenerate singularites including parabolic ones.  We refer, first of all, to  the following six ---  L. Lerman, Ya. Umanskii \cite{LermanUmanskii2},  V. Kalashnikov \cite{Kalashnikov}, N. T. Zung \cite{ZungDegen}, H. Dullin, A. Ivanov \cite{DullinIvanov2},  K. Efstathiou, A. Giacobbe \cite{Efstathiou} and Y. Colin de Verdi\`ere \cite{CdV} --- which we consider to be very important in the context of general classification programme for bifurcations occurring in integrable systems.  

It is well known that from the smooth point of view,
all parabolic orbits are equivalent,  i.e. any two parabolic orbits admit fiberwise diffeomorphic neighborhoods   (Lerman-Umanskii \cite{LermanUmanskii1, LermanUmanskii2}, Kalashnikov \cite{Kalashnikov}).  The same is true for cuspidal tori  \cite{Efstathiou}.  The simplest model for a parabolic singularity is as follows.

Consider the direct product of $\R^3$ with coordinates $x,y,\lambda$ and a circle  $S^1$  parametrized by $\varphi \mod 2\pi$  and two functions on this product $\R^3 \times S^1$:
\begin{equation}
H = x^2 + y^3 + \lambda y \quad\mbox{and} \quad F = \lambda.
\end{equation}
They commute with respect to the symplectic form
\begin{equation}
\Omega = dx\wedge dy + d\lambda \wedge d\varphi.
\end{equation}

The curve $\gamma_0(t)=(0, 0, 0, t)$ is a parabolic orbit of an integrable Hamiltonian system defined by commuting functions $H$ and $F$. However, in general,  we cannot assume that these coordinates $x,y,\lambda, \varphi$ are canonical  (in other words, the formula for $\Omega$ could be different).

The starting point of the present paper was the following question. We know that elliptic and hyperbolic orbits have no symplectic invariants \cite{ZungMiranda}. In other words,  for any elliptic or hyperbolic  (with orientable or non-orientable separatrix diagram) orbit there exists a symplectic canonical form, one and the same for all orbits of a given type (see, e.g., \cite{BBM_UMN}). Is the same true for parabolic orbits or they admit non-trivial symplectic invariants?

It appears that non-trivial symplectic invariants do exist  (a very simple invariant is given by Proposition \ref{prop:3.3.11}).  Moreover, we show that all symplectic invariants of parabolic orbits can be expressed in terms of action variables (Theorem \ref{thm:5.6}).   The next natural step would be to extend a fiberwise symplectomorphism between tubular neighborhoods of two parabolic orbits to saturated neighborhoods of the cuspidal tori that contain these orbits. This is done in Section \ref{sec:6}:   Theorems \ref{th:6.1} and \ref{th:6.2} (see also Remark \ref{rem:converse}) give necessary and sufficient conditions for symplectic equivalence of cuspidal tori.  The latter theorems basically say the only symplectic semi-local invariant of a cuspidal torus is the canonical integer affine structure on the base of the corresponding singular Lagrangian fibration.  In other words, cuspidal tori satisfy the following principle formulated  in   \cite{BITOpenProb}:

{\it Let $\phi : M \to B$ and $\phi': M' \to B'$ be two singular Lagrangian fibrations. If $B$ and $B'$ are affinely equivalent (as stratified
manifolds with singular integer affine structures), then these Lagrangian fibrations are fiberwise symplectomorphic.}

Also we would like to notice that although parabolic singularities are rather simple and specific,  some techniques developed and used in this paper are quite general and can be used for analysis of more complicated singularities.  They also can be generalised to the case of many degrees of freedom.

Acknowledgements:    This work was supported by the Russian Science Foundation (project no. 17-11-01303).   We are very grateful to D.\,Guzzetti,  A.\,Varchenko, S.\,Nemirovskii, N.\,T.\,Zung, A.\,Izosimov  for valuable comments and discussions.


\section{Definition of parabolic singularities.  Canonical form with no symplectic structure} \label{sec:2}

We begin with the definition of parabolic orbits following \cite{Efstathiou} \footnote{One essential condition, in our opinion, is missing in  \cite{Efstathiou}.  In  Definition \ref{def:parabolic} below we make a necessary modification by adding Condition (iii).}. Let $H$ and $F$ be a pair of Poisson commuting real-analytic functions on a  real-analytic symplectic manifold $(M^4, \Omega) $.  They define a Hamiltonian $\R^2$-action  (perhaps local) on $M^4$.  The dimension of the $\R^2$-orbit through a point $P\in M^4$ coincides with the rank of  the differential of the momentum map $\mathcal F=(H,F): M^4 \to \R^2$ at this point and we are interested in one-dimensional orbits.  Without loss of generality throughout the paper we will assume that $dF(P)\ne 0$.   Consider the restriction of $H$ onto the three-dimensional level set of $F$ through $P$, that is,  $H_0:=H|_{\{F=F(P)\}}$.  We assume that the rank of $d\mathcal F$ at the point $P$ equals one.  This is equivalent to any of the following:
\begin{itemize}

\item $P$ is a critical point of $H_0$;

\item there exists a unique $k\in \R$ such that $dH(P) = k dF(P)$, in particular, $P$ is a critical point of $F-kH$.
\end{itemize}

These properties hold true for each singular point $P$ of rank one of the momentum mapping $\mathcal F=(H,F)$ under the condition that $dF(P)\ne 0$.

\begin{defn} \label{def:parabolic}
A point $P$ (and the corresponding $\R^2$-orbit through this point) is called {\it parabolic} if the following conditions hold:
\begin{enumerate}
\item[(i)] the quadratic differential $d^2H_0(P)$ has rank 1; 
\item[(ii)] there exists a vector $v \in \ker d^2H_0(P)$ such that $v^3H_0\ne 0$ (by $v^3H_0$ we mean the third derivative of $H_0$ along the tangent vector $v$ at $P$);
\item[(iii)] the quadratic differential $d^2(H-kF)(P)$ has rank 3, where $k$ is the real number determined by the condition $dH(P)=kdF(P)$.
\end{enumerate}
\end{defn}

\begin{rem}{\rm  
In this definition, we use the third derivative of a function along a tangent vector which, in general,  is not well defined.  In our special case, however,  this derivative makes sense as $dH_0(P)=0$ and $v\in \ker d^2H_0(P)$.  These two properties allows us to define it as follows:
$$
v^3 (H_0) = \frac{d^3}{dt^3} |_{t=0} H_0 (\gamma (t)), 
$$
where $\gamma (t)$ is an arbitrary  curve on the hypersurface $\{ F = F(P)\} $ such that  $\gamma(0)=P$, $\frac{d\gamma}{dt} (0) = v$. The result does not depend on the choice of $\gamma(t)$. Indeed,  
$$
\frac{d^3}{dt^3} H_0(\gamma(t)) =   d^3H_0 (\gamma', \gamma',\gamma') + 3d^2H_0(\gamma',\gamma '') + dH_0(\gamma''') = d^3H_0 (\gamma', \gamma',\gamma') = d^3H_0 (v,v,v),
$$ 
as $dH_0=0$ and $\gamma' \in \ker d^2H_0$. This computation also shows that the third differential $d^3H_0$ is a well-defined cubic form on $\ker d^2H_0$ so that condition (ii) is equivalent to the fact that the third differential $d^3H_0$ does not vanish on $\ker d^2 H_0$  (at the point $P$). 
}\end{rem}

\begin{rem}\label{rem:new1}
It can be checked that in  Definition \ref{def:parabolic}, we may replace $H$ and $F$ by any other independent functions $\wt H = \wt H (H, F)$,  $\wt F = \wt F(H,F)$ such that $d\wt F(P) \ne 0$. In other words, the property of {\it being parabolic}  refers to a singularity of  the momentum mapping $\mathcal F:  M^4 \to \R^2$ and does not depend on the choice of local coordinates in a neighborhood of $\mathcal F(P) \in \R^2$.  Necessary details can be found in Appendix, see Proposition \ref{prop:ap1}.
\end{rem}

The following statement describes the structure of the singular Lagrangian fibration in a neighborhood of a parabolic point $P$.  As we are mostly interested in this fibration (rather than specific commuting functions $H$ and $F$), we allow ourselves to replace $H$ with $\widetilde H = \widetilde H(H,F)$ where $\frac{\partial \widetilde H}{\partial H}\ne 0$ and to shift  and change the sign of $F$, so that $\widetilde H$ and $\widetilde F= \pm F+\mathrm{const}$  still commute and define the same Lagrangian fibration as $H$ and $F$.  Notice that according to Remark \ref{rem:new1},  $P$ is parabolic for $\widetilde H$ and $\widetilde F$.

\begin{proposition} \label{prop:2.1}
Locally in a neighborhood of $P$  there exist a transformation 
\begin{equation} \label{eq:transf1}
\begin{aligned}
\widetilde H &= \widetilde H(H,F),  \quad\mbox{with } \frac{\partial \widetilde H}{\partial H}\ne 0, \\
\widetilde F &= \pm F + \mathrm{const},
\end{aligned}
\end{equation}
and a local coordinate system $x,y,\lambda, \varphi$ such that  $(x,y,\lambda,\varphi)|_P=(0,0,0,0)$ and
\begin{equation} \label{eq:canon1}
\widetilde H = \widetilde H(H,F)= x^2 + y^3 + \lambda y \quad\mbox{and} \quad \widetilde F = \lambda.
\end{equation}
\end{proposition}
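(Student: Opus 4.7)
\emph{Step 1 (reduction to three dimensions).} Since $dF(P)\neq 0$ and $X_F$ is non-vanishing and tangent to the level sets of $F$ near $P$, I would first choose flow-box coordinates $(x,y,\lambda,\varphi)$ centred at $P$ in which $X_F=\partial_\varphi$ and $F=F(P)+\lambda$; the choice $\widetilde F:=\lambda$ then has the form $\pm F+\mathrm{const}$. From $\{H,F\}=0$ the function $H$ is $X_F$-invariant, so $H=H(x,y,\lambda)$ is independent of $\varphi$. The proposition is thereby reduced to finding a canonical form for a one-parameter analytic family $H(\,\cdot\,,\,\cdot\,,\lambda)$ of functions of two variables.

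\emph{Step 2 (normal form on the slice $\lambda=0$).} By condition (i), the Hessian $d^2 H_0(P)$ has rank one; after a linear change of $(x,y)$ I may assume $\partial_x^2 H_0(0)\neq 0$. The parametric Morse lemma in $x$ then yields coordinates in which
\[ H=\varepsilon\,\widetilde x^{\,2}+G(y,\lambda),\qquad \varepsilon=\pm 1. \]
Composing with $H\mapsto -H$ (via $\widetilde H$) and/or $y\mapsto -y$, I arrange $\varepsilon=+1$ and make the leading cubic coefficient of $g(y):=G(y,0)$ positive. Condition (ii) then gives $g(y)=y^3 h(y)$ with $h(0)>0$, and the $y$-change $y\mapsto y\,h(y)^{1/3}$ normalises $g(y)=y^3$. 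Renaming coordinates, $H=x^2+y^3+\lambda\,R(y,\lambda)$ for some analytic $R$.

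\emph{Step 3 (versality and the canonical form).} A direct computation of the $3\times 3$ Hessian of $H$ at the origin in $(x,y,\lambda)$ shows that its rank equals $3$ exactly when $R_y(0,0):=\partial_y R(0,0)\neq 0$; since $H$ is $\varphi$-independent and $F=F(P)+\lambda$, one has $d^2(H-kF)(P)=d^2H(P)$ with $\partial_\varphi$ automatically in its kernel, so this matches condition (iii). In the local algebra $\R\{y\}/\langle y^2\rangle$ modulo the line $\R\cdot 1$, the restriction $R(y,0)$ represents the class $R_y(0,0)\,y\neq 0$, so the one-parameter deformation of $x^2+y^3$ is infinitesimally versal in the sense of Thom--Mather. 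The real-analytic version of the universal unfolding theorem then supplies an analytic change $y=Y(y',\lambda)$, an analytic reparametrisation $\mu(\lambda)$ with $\mu(0)=0$, $\mu'(0)\neq 0$, and an analytic $c(\lambda)$ such that
\[ G(Y(y',\lambda),\lambda)=(y')^3+\mu(\lambda)\,y'+c(\lambda). \]
If $\mu'(0)<0$ I first replace $F$ by $-F$; otherwise $A(\lambda):=(\lambda/\mu(\lambda))^{3/2}$ is positive and analytic near $0$. The rescalings $x=A(\lambda)^{-1/2}\,\widetilde x$, $y'=A(\lambda)^{-1/3}\,\widetilde y$, combined with $\widetilde H:=A(\lambda)\bigl(H-c(\lambda)\bigr)$, then yield exactly $\widetilde H=\widetilde x^{\,2}+\widetilde y^{\,3}+\lambda\,\widetilde y$. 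The resulting $\widetilde H$ is a function of $(H,F)$ with $\partial_H\widetilde H=A(F)\neq 0$, as required.

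The substantive obstacle is Step 3: invoking the Thom--Mather universal unfolding theorem in the real-analytic category. Its usual proof rests on the Malgrange preparation theorem, so one has to replace it by an analytic analogue (for instance, via Weierstrass preparation after complexification and descent to the real locus). An alternative, more explicit route is to solve the normal-form equation recursively in powers of $\lambda$: at each order one must divide an analytic function of $y$ modulo the Jacobian ideal $\langle y^2\rangle\subset\R\{y\}$, which is immediate by Hadamard's lemma, and a majorant estimate then controls convergence. The remaining ingredients (flow box, parametric Morse, Hadamard, and the final rescaling) are standard.
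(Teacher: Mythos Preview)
Your argument is correct and follows the paper closely through the reduction to three dimensions, the parametric Morse lemma, and the appeal to versality of the $A_2$ germ; these are exactly the ingredients of the paper's Lemma (the preliminary form $H=\pm(x^2+y^3+b(\lambda)y+a(\lambda))$ with $b'(0)\ne0$). One phrasing point: the paper simply invokes right-versality of $\hat y^3+\lambda_2\hat y+\lambda_1$ and induces the given one-parameter family from it, obtaining $b(\lambda)$ and $a(\lambda)$ first and \emph{then} reading off $b'(0)\ne0$ from condition~(iii); you instead argue that $R_y(0,0)\ne0$ makes the one-parameter family versal ``modulo constants'' and deduce $\mu'(0)\ne0$ as part of the unfolding statement. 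Both routes are equivalent, and your concern about the analytic category is exactly the point the paper handles by citing \cite{AVG}.

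The genuine difference is the last step. After reaching $H=x^2+(y')^3+\mu(\lambda)y'+c(\lambda)$ with $\mu'(0)\ne0$, you perform a direct rescaling $\widetilde x=A(\lambda)^{1/2}x$, $\widetilde y=A(\lambda)^{1/3}y'$, $\widetilde H=A(\lambda)(H-c(\lambda))$ with $A(\lambda)=(\lambda/\mu(\lambda))^{3/2}$ to finish. The paper instead develops a separate lifting result (Proposition~\ref{prop:lift}): a local diffeomorphism of the base is liftable if and only if it respects the bifurcation diagram and its elliptic/hyperbolic partition. It then writes down the same rescaling \emph{on the base}, checks it sends the bifurcation diagram to the canonical cusp $\{\widetilde H^2=-\tfrac{4}{27}\widetilde F^3\}$, and invokes the lifting result to obtain the coordinate change. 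Your route is shorter and entirely adequate for this proposition taken in isolation; the paper's detour through Proposition~\ref{prop:lift} is conceptually heavier but pays off later, since the same lifting criterion is reused in Sections~\ref{sec:3} and \ref{sec:5} (e.g.\ Proposition~\ref{pro:coord2} and Theorem~\ref{thm:5.6}).
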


\begin{rem}
We do not require that this coordinate system is canonical and, in this view,  Proposition \ref{prop:2.1} describes a normal form of a parabolic singularity in the sense of Singularity Theory with no symplectic structure involved.  This statement is local and we do not need to assume that the orbit through $P$ is closed. Later on, the variable $\varphi$ will be one of the angle variables defined modulo $2\pi$,  but here $\varphi$ just  belongs to a certain interval.
\end{rem}

\begin{proof}   
The proof of this statement if well known but we still want to briefly explain some of its steps to reveal  important underlying phenomena. The first step is to find $x,y,\lambda,\varphi$ without touching $H$ and $F$. 

\begin{lemma} \label {lem:HF}
Under the above assumptions,  there exist local coordinates $x,y,\lambda,\varphi$ such that $(x,y,\lambda,\varphi)|_P=(0,0,F(P),0)$ and
\begin{equation}
\label{eq:canon2}
H = \pm (x^2  + y^3 + b(\lambda) y + a(\lambda)),    \quad  F=\lambda,
\end{equation}
where $a(\lambda)$ and $b(\lambda)$ are real-analytic functions with $b(F(P))=0$, $b'(F(P))\ne0$.
\end{lemma}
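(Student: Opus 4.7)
My plan is to reduce the four-dimensional problem to a three-dimensional one via the Hamiltonian flow of $F$, then bring the three-variable family into cusp normal form by an analytic parametric splitting lemma combined with the Weierstrass preparation theorem, and finally read off $b(\lambda_0)=0$ and $b'(\lambda_0)\neq 0$ from the parabolic conditions.

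\emph{Coordinates via the flow of $X_F$.} Since $dF(P)\neq 0$, pick a real-analytic hypersurface $N\ni P$ transverse to the Hamiltonian vector field $X_F$ at $P$. The function $\lambda:=F$ has $d\lambda|_N\neq 0$ at $P$, because $X_F\in\ker dF$ (as $dF(X_F)=\Omega(X_F,X_F)=0$) together with $X_F(P)\notin T_PN$ forces $T_PN\not\subset\ker dF$. Choose analytic coordinates $(q_2,q_3)$ on $N\cap\{F=\lambda_0\}$ (where $\lambda_0:=F(P)$) and use $\lambda$ as the remaining coordinate on $N$; let $\varphi$ be the time along orbits of $X_F$ starting from $N$. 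Since $\{H,F\}=\{F,F\}=0$, both $H$ and $F$ are $X_F$-invariant, so in the coordinates $(q_2,q_3,\lambda,\varphi)$ one has $F=\lambda$ and $H=H(q_2,q_3,\lambda)$.

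\emph{Normal form for $H(q_2,q_3,\lambda)$.} After a linear change of $(q_2,q_3)$ we may assume $\partial_{q_2}^2 H(0,0,\lambda_0)\neq 0$ (possible by condition (i), which says the Hessian has rank $1$). The analytic parametric splitting lemma gives coordinates $x=x(q_2,q_3,\lambda)$, $y=q_3$ such that $H=\varepsilon_1 x^2+g(y,\lambda)+a_0(\lambda)$ with $\varepsilon_1=\pm 1$ and $g(0,\lambda)\equiv 0$. By condition (ii), $g(\cdot,\lambda_0)$ vanishes at $y=0$ to order exactly three. Weierstrass preparation applied to $g$ in $y$ with parameter $\lambda$ then yields
\[
g(y,\lambda)=u(y,\lambda)\bigl(y^3+\beta_2(\lambda)y^2+\beta_1(\lambda)y\bigr),\qquad u(0,\lambda_0)\neq 0.
\]
The Tschirnhaus shift $y\mapsto y-\beta_2(\lambda)/3$ eliminates the quadratic coefficient; after absorbing $|u|^{1/3}$ into a new $y$-variable (with a compensating rescaling of $x$), and if necessary flipping the sign of $y$ and/or replacing $H$ by $-H$ via the admissible transformation (\ref{eq:transf1}) to match signs, $H$ takes the claimed form $H=\pm(x^2+y^3+b(\lambda)y+a(\lambda))$.

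\emph{Parabolic conditions give $b(\lambda_0)=0$ and $b'(\lambda_0)\neq 0$.} The vanishing $b(\lambda_0)=0$ is forced by $P$ being a critical point of $H_0$: at $(x,y,\lambda)=(0,0,\lambda_0)$ the partial $\partial_y H=\pm(3y^2+b(\lambda))$ equals $\pm b(\lambda_0)$, which must vanish. For the non-vanishing of $b'(\lambda_0)$, note that in the normal form $dH(P)=\pm a'(\lambda_0)\,d\lambda$ and $dF=d\lambda$, so $k=\pm a'(\lambda_0)$; computing $d^2(H-kF)(P)$ in the coordinates $(x,y,\lambda,\varphi)$ gives a matrix with vanishing $\varphi$-row and column, diagonal entry $\pm 2$ in $x$, and the $2\times 2$ block
\[
\pm\begin{pmatrix}0 & b'(\lambda_0) \\ b'(\lambda_0) & a''(\lambda_0)\end{pmatrix}
\]
in $(y,\lambda)$, of determinant $-(b'(\lambda_0))^2$. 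Thus total rank $3$ (condition (iii)) is equivalent to $b'(\lambda_0)\neq 0$. The main technical effort is the combination of the parametric splitting lemma and Weierstrass preparation in the real-analytic category, plus the bookkeeping of signs in absorbing the unit $u$; once these are granted the rest is a direct check.
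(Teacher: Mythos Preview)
Your overall strategy matches the paper's: reduce to three variables using that $H$ is $X_F$-invariant, apply a parametric Morse/splitting lemma to isolate $\pm x^2$, bring the remaining one-variable family to cusp normal form, and then read off $b(\lambda_0)=0$, $b'(\lambda_0)\neq 0$ from conditions (i)--(iii). Your first and last paragraphs are fine, and using the flow of $X_F$ in place of the paper's Darboux coordinates is a harmless variation.

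The gap is in the middle paragraph, at the step ``absorbing $|u|^{1/3}$ into a new $y$-variable.'' After Weierstrass preparation and the Tschirnhaus shift you have
\[
g(y,\lambda)=u(y,\lambda)\bigl(y^3+p(\lambda)y+q(\lambda)\bigr),
\]
with $u$ a unit depending on \emph{both} $y$ and $\lambda$. The substitution $\tilde y=|u|^{1/3}y$ does give $u\,y^3=\tilde y^3$, but then $u\,p(\lambda)\,y=u^{2/3}p(\lambda)\,\tilde y$ and $u\,q(\lambda)$ still carry $y$-dependence through $u$, so the coefficients of $\tilde y^1$ and $\tilde y^0$ are no longer functions of $\lambda$ alone. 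A simple rescaling cannot remove a $y$-dependent unit from a depressed cubic while keeping the lower coefficients parameter-only; this is exactly the content of the statement that $y^3+\lambda_1 y+\lambda_0$ is a \emph{versal} deformation of $y^3$. The paper handles this step by citing the versality theorem from Arnold--Varchenko--Gusein-Zade: since $y^3+\lambda_1 y+\lambda_0$ is right-versal, any analytic deformation $g(y,\lambda)$ of $y^3$ is induced from it, giving $g=y^3+b(\lambda)y+a(\lambda)$ after a $\lambda$-dependent change of the $y$-coordinate. Your Weierstrass-plus-rescaling route stops one nontrivial lemma short of this; what you call ``bookkeeping of signs in absorbing the unit $u$'' is in fact the substantive part of the argument. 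Either invoke versality directly, or supply an explicit inductive construction of $\tilde y(y,\lambda)$, $\tilde p(\lambda)$, $\tilde q(\lambda)$ solving $u(y,\lambda)(y^3+py+q)=\tilde y^3+\tilde p\tilde y+\tilde q$.
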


\begin{proof} 
Without loss of generality, we assume that $H(P)=F(P)=0$.
First of all we need to kill one dimension using the fact that $H$ and $F$ Poisson commute.  Since  $dF(P)\ne 0$ we can choose a canonical coordinate system $p_1, q_1, p_2, q_2$ such that $F=q_2$.  Since $H$ and $F$ commute,  we conclude that $H$ does not depend on $p_2$, i.e., $H=H(p_1, q_1, q_2)$. Thus, $p_2$ does not play any role, so we may forget about it and continue working with $p_1, q_1, q_2$.

Let us now think of $H$ as a function of two variables $q_1$ and $p_1$ depending on $q_2 = \lambda$ as a parameter.
We have $\partial H/\partial p_1|_P=\partial H/\partial q_1|_P=0$ and, without loss of generality, $\partial^2H/\partial p_1^2|_P\ne0$.
We are now in a quite standard situation in singularity theory.

By a parametric version of the Morse lemma, the function $H$ can be written as $H=\pm(x^2+f(q_1,\lambda))$, for some new local variable $x=x(p_1,q_1,\lambda)$ such that $x|_P=0$ and $\D x/\D p_1\ne0$. 
Now, condition (ii) of the definition of a parabolic point is satisfied if and only if the function $f(q_1,0)$ in one variable $q_1$ has order $3$ at the point $q_1|_P$. Hence, this function can be written as $\hat y^3$ for some variable $\hat y=\hat y(q_1)$ with $\hat y(q_1(P))=0$.

Now the function $f(q_1,\lambda)$ is a 1-parameter ``deformation'' of the function $f(q_1,0)=\hat y^3$ with the parameter $\lambda$. It follows from \cite [Sec. 8.2, Theorem or Example]{AVG} that the deformation $\hat y^3+\lambda_2\hat y+\lambda_1$ is right-infinitesimally versal. By the versality theorem \cite[Sec. 8.3]{AVG}, it is right-versal (for a definition of a versal deformation, see \cite[Sec. 8.1]{AVG}). Since any deformation is right-equivalent to a deformation induced from the right-versal one, we have $f(q_1,\lambda)=y^3+b(\lambda)y+a(\lambda)$ for some real-analytic functions $y=y(\hat y,\lambda)$, $a(\lambda)$ and $b(\lambda)$ such that $y(\hat y,0)=\hat y$, $a(0)=b(0)=0$. Since, by assumption, the quadratic differential $d^2(H-kF)(P)$ has rank 3, we have $b'(0)\ne0$.
So, we obtain the representation \eqref{eq:canon2}.
\end{proof}

Later on we will need to rearrange leaves of our singular Lagrangian fibration by using some transformations of the form  (the fibration itself remains unchanged) 
\begin{equation}
\label{eq:transbase}
H \mapsto \widetilde H=\widetilde H(H,F),  \  F \mapsto \widetilde F=\widetilde F(H,F).
\end{equation}
So we need to understand if such a transformation (acting on the base of the Lagrangian fibration) can be realised by a fiberwise analytic diffeomorphism upstairs.  In other words, we want to know which of transformations \eqref{eq:transbase} are liftable.

Let us look at the (local) bifurcation diagram (i.e.\ the set of critical values) of the map defined by $H$ and $F$ from \eqref{eq:canon2}. This bifurcation diagram is as follows, for a $+$ sign in \eqref{eq:canon2}:
$$
\Sigma = \left\{ \bigl(H-a(F)\bigr)^2 = -\frac{4}{27} b(F)^3 \right\} \subset \R^2 (H,F), 
$$
and it has a cusp at the point $(H(P),F(P))$ that splits $\Sigma$ into two smooth branches, $\Sigma_{\mathrm{ell}}$ and $\Sigma_{\mathrm{hyp}}$, corresponding to one-parameter families of elliptic and hyperbolic orbits. The bifurcation diagram for $a(\lambda)=0$ and $b(\lambda)=\lambda$ is shown on Fig. 3. 
Notice that our choice of the + sign in \eqref{eq:canon2} simply means that the monotone function $H-kF|_\Sigma$ increases w.r.t. the orientation of $\Sigma$ from $\Sigma_{\mathrm{ell}}$ to $\Sigma_{\mathrm{hyp}}$.

It can be easily seen (see the proof of Proposition \ref{prop:lift} below) that this bifurcation diagram allows us to reconstruct  both functions $a(\lambda)$ and $b(\lambda)$.    We will use this observation to prove the following

\begin{proposition}\label{prop:lift}
Assume we have two parabolic singularities defined by functions $H, F$ at a point $P$ and $\widetilde H, \widetilde F$ at a point $\widetilde P$ respectively.  A map (local  analytic diffeomorphism)
$$
\phi :  \R^2 (H, F) \to \R^2 (\widetilde H, \widetilde F)
$$
is liftable if and only if $\phi$ transforms the bifurcation diagram  of $(H,F)$ to that of $(\widetilde H, \widetilde F)$, i.e. $\phi (\Sigma) = \widetilde \Sigma$, together with its partition into elliptic and hyperbolic branches. In other words, the condition $\phi (\Sigma) = \widetilde \Sigma$ is necessary and sufficient for the existence of a local  analytic diffeomorphism $\Phi$ such that the diagram 
$$
\begin{tikzcd}
M^4 \arrow{r}[]{\Phi}  \arrow{d}{(H,F)}&  {\widetilde{M}}^4  \arrow{d}[]{(\widetilde H,\widetilde F)}\\
\R^2 \arrow{r}{\phi}& \R^2
\end{tikzcd}
$$
is commutative.
\end{proposition}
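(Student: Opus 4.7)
My plan is to prove the two implications separately; necessity is almost immediate, while sufficiency requires extracting the data $(a,b,\varepsilon)$ of the canonical form from the labelled bifurcation diagram.

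For \textbf{necessity}, suppose $\Phi$ exists making the diagram commute. Since $\Phi$ is a local analytic diffeomorphism of $M^4$ onto $\widetilde{M}{}^4$, it sends critical points of the momentum map $(H,F)$ to critical points of $(\widetilde H,\widetilde F)$; projecting via the commutative diagram gives $\phi(\Sigma)=\widetilde\Sigma$. The partition of $\Sigma$ into elliptic and hyperbolic branches is an invariant of the Lagrangian fibration near $P$ (detected, for instance, by the signature of the Hessian of $H-kF$ on the symplectic normal to the orbit, or equivalently by the local topology of nearby fibers), and is therefore preserved by any fiberwise diffeomorphism.

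For \textbf{sufficiency}, I would first apply Lemma \ref{lem:HF} to put both systems in the canonical form
$$H=\varepsilon(x^2+y^3+b(\lambda)y+a(\lambda)),\ F=\lambda,\qquad \widetilde H=\widetilde\varepsilon(\widetilde x^2+\widetilde y^3+\widetilde b(\widetilde\lambda)\widetilde y+\widetilde a(\widetilde\lambda)),\ \widetilde F=\widetilde\lambda,$$
with $\varepsilon,\widetilde\varepsilon\in\{\pm1\}$. Define a new commuting pair on $M^4$ by
$$H':=\phi_1(H,F),\qquad F':=\phi_2(H,F),$$
where $\phi=(\phi_1,\phi_2)$. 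Since $\phi$ is a local analytic diffeomorphism, $(H',F')$ defines the same Lagrangian fibration as $(H,F)$ and $P$ remains parabolic for it (Remark \ref{rem:new1}), so Lemma \ref{lem:HF} applies once more, producing coordinates $(x',y',\lambda',\varphi')$ in which
$$H'=\varepsilon'\bigl((x')^2+(y')^3+b'(\lambda')y'+a'(\lambda')\bigr),\qquad F'=\lambda'.$$

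The crux is to show $\varepsilon'=\widetilde\varepsilon$, $a'=\widetilde a$, $b'=\widetilde b$. By construction the bifurcation diagram of $(H',F')$ is $\phi(\Sigma)=\widetilde\Sigma$, with matching ell/hyp partition. A direct computation on the canonical form locates the two branches over $\lambda$ at
$$H=\varepsilon\bigl(a(\lambda)\mp 2(-b(\lambda)/3)^{3/2}\bigr),\qquad b(\lambda)\le 0,$$
and a Hessian check at the critical points $y=\pm\sqrt{-b(\lambda)/3}$, $x=0$ identifies which of the two signs corresponds to the elliptic branch in terms of $\varepsilon$. Hence the labelled bifurcation diagram reconstructs the triple $(\varepsilon,a(\lambda),b(\lambda))$ uniquely: $\varepsilon a$ is the midpoint of the two branches, $|b|$ comes from their separation, and the sign of $\varepsilon$ is pinned down by which branch is elliptic (equivalently, by the monotonicity convention for $H-kF$ noted just before the proposition). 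Applying this reconstruction to both $(H',F')$ and $(\widetilde H,\widetilde F)$ — whose labelled bifurcation diagrams coincide in $\R^2(\widetilde H,\widetilde F)$ — yields the desired equalities. Finally, $\Phi$ is defined as the local analytic diffeomorphism identifying the coordinate charts $(x',y',\lambda',\varphi')$ and $(\widetilde x,\widetilde y,\widetilde\lambda,\widetilde\varphi)$; then $\widetilde H\circ\Phi=H'=\phi_1(H,F)$ and $\widetilde F\circ\Phi=F'=\phi_2(H,F)$, which is exactly commutativity of the diagram. I expect the main obstacle to be the sign bookkeeping in the recovery of $\varepsilon$ from the partition of $\Sigma$; once that is settled, the rest of the argument is essentially a transport through Lemma \ref{lem:HF}.
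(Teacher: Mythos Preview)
Your approach is essentially identical to the paper's: compose with $\phi$ to get a new pair $(H',F')$, apply Lemma~\ref{lem:HF} to both $(H',F')$ and $(\widetilde H,\widetilde F)$, then reconstruct the sign and the functions $a,b$ from the labelled bifurcation diagram and identify the two coordinate charts.

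There is one small step you skip that the paper does handle: to apply Lemma~\ref{lem:HF} (and Remark~\ref{rem:new1}) to $(H',F')$ you need $dF'(P)\ne 0$, and this is not automatic --- $\phi$ is an arbitrary local diffeomorphism of $\R^2$, and $dF'(P)=(\partial_H\phi_2\cdot k+\partial_F\phi_2)\,dF(P)$ could in principle vanish. The paper deduces $dF'(P)\ne 0$ from the geometry of the bifurcation diagram: since $\widetilde\Sigma$ lies in a half-plane bounded by the line $\{\widetilde F=\widetilde F(\widetilde P)\}$, and $\phi(\Sigma)=\widetilde\Sigma$, the tangent to $\Sigma'$ at the cusp is transverse to $\{\widetilde F=\mathrm{const}\}$, forcing $dF'(P)\ne 0$. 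Once you insert this check, your argument and the paper's coincide; the paper also notes that the equalities $a'=\widetilde a$, $b'=\widetilde b$ are first obtained only on the half-neighbourhood $\{\widetilde b\le 0\}$ and then extended by real-analyticity, a point your reconstruction argument leaves implicit.
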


\begin{proof}
The ``only if'' part is obvious. 

Let us prove the ``if'' part.
Denote $\phi\circ(H,F)$ by $(H_1,F_1)$. Clearly, $\phi$ transforms the bifurcation diagram of $(H,F)$ to that of $(H_1,F_1)$, together with their partitions into elliptic and hyperbolic branches. Hence, the bifurcation diagram $\Sigma_1$ of $(H_1,F_1)$ coincides with the bifurcation diagram $\widetilde\Sigma$ of $(\widetilde H,\widetilde F)$, together with its partition into elliptic and hyperbolic branches.

As shown above,  under the condition that $d\widetilde F(\wt P)\ne0$,  the bifurcation diagram $\wt \Sigma$ of the mapping $\wt{\mathcal F}=(\wt H, \wt F): {\widetilde{M}}^4 \to \R^2(h,f)$ is defined by
$$
\wt \Sigma =\{ (h,f)\in \R^2 ~|~   (h- \tilde a(f))^2 = -\frac{4}{27} \tilde b(f)^3\}
$$
for some functions $a(\cdot)$ and $b(\cdot)$ determined by the canonical form \eqref{eq:canon2}.
Hence $\widetilde\Sigma$ lies entirely in a half-plane $\{(h,f) \mid \widetilde b(f)\le 0\}\subset\R^2(h,f)$  bounded by a line $\{f=\const\}$ through the cusp point $(\widetilde H(\widetilde P),\widetilde F(\widetilde P))$.  Since $\Sigma_1=\widetilde\Sigma$, we conclude that $dF_1(P)\ne0$ as well. 
 
By Lemma \ref {lem:HF}, there exist local (real-analytic) coordinates $x_1,y_1,\lambda_1,\varphi_1$ in a neighborhood $U_1$ of $P$ and coordinates $\widetilde x,\widetilde y,\widetilde \lambda,\widetilde \varphi$ in a neighborhood $\widetilde U$ of $\widetilde P$ such that 
\begin{equation} \label {eq:H1F1}
\begin{aligned}
\eta_1 H_1 &= x_1^2  + y_1^3 + b_1(\lambda_1) y_1 + a_1(\lambda_1),    \quad  & F_1 &= \lambda_1, \\
\widetilde\eta \widetilde H &= \widetilde x^2  + \widetilde y^3 + \widetilde b(\widetilde \lambda) \widetilde y + \widetilde a(\widetilde \lambda),    \quad  &\widetilde F &= \widetilde \lambda,
\end{aligned}
\end{equation}
for some signs $\eta_1,\wt\eta\in\{1,-1\}$. 
The elliptic and hyperbolic branches of $\widetilde\Sigma$ have the form
$$
\begin{aligned}
\widetilde\Sigma_{\mathrm{ell}} &= \left\{\left. (h,f)= \left(\widetilde\eta\left(\widetilde a(f) - 2(-\widetilde b(f)/3)^{3/2}\right),f\right) \, \right|\, \widetilde b(f)<0\right\}, \\
\widetilde\Sigma_{\mathrm{hyp}} &= \left\{\left. (h,f)= \left(\widetilde\eta\left(\widetilde a(f) + 2(-\widetilde b(f)/3)^{3/2}\right),f\right) \, \right|\, \widetilde b(f)<0\right\},
\end{aligned}
$$
in particular, 
$\widetilde\eta h-\widetilde a(f)>0$ on the hyperbolic branch of $\widetilde\Sigma$ and $<0$ on its elliptic branch.
Since similar properties and formulae hold for the elliptic and hyperbolic branches $\Sigma_{1,\mathrm{ell}}$ and $\Sigma_{1,\mathrm{hyp}}$ of $\Sigma_1$, 
moreover $\Sigma_{1,{\mathrm{ell}}}=\widetilde\Sigma_{\mathrm{ell}}$ and $\Sigma_{1,{\mathrm{hyp}}}=\widetilde\Sigma_{\mathrm{hyp}}$,  we obtain the equalities
\begin{equation} \label {eq:eq}
\eta_1=\widetilde\eta, \quad
a_1(\lambda)=\widetilde a(\lambda), \quad
b_1(\lambda)=\widetilde b(\lambda)
\end{equation}
where the equalities of functions hold in a half-neighbourhood $\{\lambda\mid\widetilde b(\lambda)\le0\}$ of the point $\widetilde F(\widetilde P)\in\R$. Since all functions are real-analytic at this point, the equalities of functions in \eqref{eq:eq} hold in an entire neighbourhood.

Define a (real-analytic) diffeomorphism germ $\Phi:(U_1,P)\to(\widetilde U,\widetilde P)$ given by the identity map in the local coordinates $(x_1,y_1,\lambda_1,\varphi_1)$ and $(\widetilde x,\widetilde y,\widetilde \lambda,\widetilde \varphi)$. By \eqref{eq:H1F1} and \eqref{eq:eq}, $\Phi$ transforms $(\widetilde H,\widetilde F)$ to $(H_1,F_1)$, so it 
has the desired property $\phi\circ(H,F)=(H_1,F_1)=(\widetilde H,\widetilde F)\circ\Phi$.
\end{proof}

Proposition \ref{prop:lift}  implies the following

\begin{cor}
\label{cor:2.1}
Let $P$ be a parabolic point for an integrable Hamiltonian system with the momentum mapping $\mathcal F= (H,F): M^4 \to \R^2$. Assume that the local bifurcation diagram $\Sigma \subset \R^2(H,F)$ of $\mathcal F$ takes the standard form 
\begin{equation}
\label{eq:canonbd1}
\Sigma = \left\{ {H}^2 = -\frac{4}{27} {F}^3 \right\}  \quad \mbox{with} \quad
\Sigma_{\mathrm{ell}} = \Sigma \cap \{{H}<0\},\ 
\Sigma_{\mathrm{hyp}} = \Sigma \cap \{{H}>0\}. 
\end{equation}
Then in a neighborhood of a parabolic point  there exists a local coordinate system $(x,y,\lambda,\varphi)$  in which 
$H=x^2 + y^3 + \lambda y$ and $F=\lambda$.
\end{cor}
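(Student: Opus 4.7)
The plan is to deduce this corollary directly from Proposition \ref{prop:lift} by comparing the given system with the standard model manifold $\widetilde M^4 = \R^4$ with coordinates $(\widetilde x, \widetilde y, \widetilde \lambda, \widetilde \varphi)$ and functions $\widetilde H = \widetilde x^2 + \widetilde y^3 + \widetilde \lambda\widetilde y$, $\widetilde F = \widetilde \lambda$, based at the parabolic point $\widetilde P = 0$. The idea is that the identity map $\phi = \mathrm{id}_{\R^2}$ on the base will satisfy the sufficient condition of Proposition \ref{prop:lift}, and then pulling back the model coordinates via the lift $\Phi$ gives exactly the desired normal form on $M^4$.

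First I would compute the bifurcation diagram of the model. Solving $\partial \widetilde H/\partial \widetilde x = 2\widetilde x = 0$ and $\partial \widetilde H/\partial \widetilde y = 3\widetilde y^2 + \widetilde \lambda = 0$ gives the critical set $\{\widetilde x = 0,\ \widetilde \lambda = -3\widetilde y^2\}$, whose image under $(\widetilde H, \widetilde F)$ is parametrized by $(\widetilde H, \widetilde F) = (-2\widetilde y^3, -3\widetilde y^2)$. Eliminating $\widetilde y$ yields $\widetilde \Sigma = \{\widetilde H^2 = -\tfrac{4}{27}\widetilde F^3\}$, exactly matching \eqref{eq:canonbd1}. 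To identify the branches, note that the Hessian of $\widetilde H$ restricted to the level set $\{\widetilde F = \widetilde \lambda\}$ is $\operatorname{diag}(2, 6\widetilde y)$: it is positive definite for $\widetilde y > 0$ (i.e., along $\widetilde H = -2\widetilde y^3 < 0$), giving the elliptic branch, and indefinite for $\widetilde y < 0$ (i.e., along $\widetilde H > 0$), giving the hyperbolic branch. Thus the model's bifurcation diagram is partitioned into elliptic and hyperbolic branches exactly as in \eqref{eq:canonbd1}.

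Next, by hypothesis the bifurcation diagram of $(H,F)$ at $P$ has the same shape and the same partition. Consequently, the identity map $\phi = \mathrm{id} : \R^2(H,F) \to \R^2(\widetilde H, \widetilde F)$ sends $\Sigma$ to $\widetilde \Sigma$ together with its partition into elliptic and hyperbolic branches. Proposition \ref{prop:lift} then produces a local real-analytic diffeomorphism germ $\Phi : (M^4, P) \to (\widetilde M^4, \widetilde P)$ satisfying $(\widetilde H, \widetilde F) \circ \Phi = (H, F)$. Defining the local coordinates on $M^4$ by $x := \widetilde x \circ \Phi$, $y := \widetilde y \circ \Phi$, $\lambda := \widetilde \lambda \circ \Phi$, $\varphi := \widetilde \varphi \circ \Phi$, we obtain the required identities $H = x^2 + y^3 + \lambda y$ and $F = \lambda$.

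The main (and essentially only) subtlety is the sign $\eta$ from Lemma \ref{lem:HF}: a priori Lemma \ref{lem:HF} only produces the form $H = \pm(x^2 + y^3 + b(\lambda)y + a(\lambda))$, and the wrong choice of sign would swap the elliptic and hyperbolic branches across the cusp. The assumption $\Sigma_{\mathrm{ell}} = \Sigma \cap \{H < 0\}$ forces $\eta = +1$ (together with $a \equiv 0$ and $b(\lambda) = \lambda$, which are already encoded in the bifurcation diagram via the argument in the proof of Proposition \ref{prop:lift}). This sign-fixing is precisely the content that Proposition \ref{prop:lift} converts, via the preservation of the branch partition under $\phi$, into the existence of the desired lift $\Phi$. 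No further work is needed.
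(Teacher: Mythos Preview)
Your proof is correct and follows essentially the same approach as the paper: observe that the standard model $(\widetilde H,\widetilde F)=(x^2+y^3+\lambda y,\lambda)$ has bifurcation diagram \eqref{eq:canonbd1}, then apply Proposition~\ref{prop:lift} with $\phi=\mathrm{id}$. Your explicit computation of the model's bifurcation diagram and branch identification, together with the discussion of the sign $\eta$, spell out details the paper leaves implicit, but the argument is the same.
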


\begin{proof}   
It is sufficient to notice that the pair of functions $\wt H = x^2 + y^3 + \lambda y$, $\wt F = \lambda$ define a parabolic singular point with the standard bifurcation diagram \eqref{eq:canonbd1}.   According to Proposition \ref{prop:lift}  any other parabolic singularity with the same bifurcation diagram is fiberwise diffeomorphic to this simplest model, moreover,  the map $\phi: \R^2(H,F) \to \R^2(\wt H, \wt F)$ between the bases is defined by $\wt H = H$, $\wt F=F$.  \end{proof}

We are now able to complete the proof of Proposition \ref{prop:2.1}. In view of Corollary \ref{cor:2.1}, it is sufficient to show that by a suitable transformation \eqref{eq:transf1} the bifurcation diagram, together with its partition into elliptic and hyperbolic branches, can be reduced to the standard form \eqref{eq:canonbd1}. 

As shown above, for the original functions $H$ and $F$ the bifurcation diagram is defined by the equation 
$$
\Sigma = \left\{ \bigl(H-a(F)\bigr)^2 = -\frac{4}{27} b(F)^3 \right\}
$$
(here we assume that $H$ in \eqref{eq:canon2} comes with $+$).

Let  $F(P) = f_0$ so that $b(f_0)=0$ and $b'(f_0)\ne0$,  then we can represent $b(\lambda)$ as $b(\lambda) = (\lambda-f_0) c(\lambda)$ with $c(f_0)\ne 0$  and rewrite the equation for $\Sigma$ in the form
$$
\left( \frac{H -a(F)}{|c(F)|^{3/2}}\right)^2 = -\eta_F \frac{4}{27} (F-f_0)^3 
$$
with $\eta_F=c(f_0)/|c(f_0)|$ or, equivalently, 
$$
\Sigma = \left\{ {\widetilde H}^2 = -\frac{4}{27} \widetilde{F}^3 \right\}  \quad \mbox{with} \quad
\Sigma_{\mathrm{ell}} = \Sigma \cap \{\widetilde{H}<0\},\ 
\Sigma_{\mathrm{hyp}} = \Sigma \cap \{\widetilde{H}>0\}, 
$$
for $\widetilde{H} = \dfrac{H-a(F)}{|c(F)|^{3/2}}$ and $\widetilde{F} = \eta_F (F - f_0)$, which coincides with \eqref{eq:canonbd1} as required.  \end{proof}


\section{Description of a neighborhood of a parabolic orbit with  symplectic structure} \label{sec:3}

Our next goal is to describe the symplectic structure $\Omega$ near a parabolic orbit.

An important property of a parabolic orbit is the existence (in real-analytic case) of a free Hamiltonian $S^1$-action in its tubular neighborhood  (N.T. Zung  \cite{ZungDegen}, compare Kalashnikov \cite{Kalashnikov}).
In other words, without loss of generality we may assume that one of the commuting functions, say $F$, generates this $S^1$-action, i.e., the Hamiltonian flow of $F$ is $2\pi$-periodic. 
From the viewpoint of singularity theory, this means that in our case the parameter of the versal deformation is essentially unique and is given by the Hamiltonian of the $S^1$-action  (or in slightly different terms, by the action variable related to the cycle in the first homology group of fibers that corresponds to this $S^1$-action).  The latter interpretation, in particular, means that one of two action variables is a real-analytic function defined on the whole neighborhood $U(\mathcal L_0)$ of $\mathcal L_0$ including singular fibers, where $\mathcal L_0$ denotes the singular fiber (cuspidal torus) containing the parabolic orbit $\gamma_0$.
The action variable $F$ is defined up to changing $F\to\pm F+\const$, and we can (and will) choose $F$ in such a way that $F(P)=0$ and the bifurcation diagram $\Sigma$ is located in the domain $\{ F \le 0\}$.

Basically, what we want to do next is to reduce our Hamiltonian system w.r.t. this action.  We shall think of $F$ as a parameter and denote it by $\lambda$  as above.
In particular, now we can choose a 
coordinate system $x,y,\lambda,\varphi$ in a tubular neighborhood $U(\gamma_0)$ of $\gamma_0$ in such a way that the Hamiltonian vector field of $\lambda$ is $\frac{\partial}{\partial \varphi}$. Since $H$ commutes with $F=\lambda$, we conclude that $H=H(x,y,\lambda)$ and we are in the situation discussed in the previous section. If we are only interested in the symplectic topology of the fibration, we are free in the choice of $H$ (in contrast to $F$ which is essentially unique), so according to Proposition \ref{prop:2.1} we may assume  without loss of generality 
that $H= x^2 + y^3 + \lambda y$. However, these coordinates are not canonical, so that (in the tubular neighborhood $U(\gamma_0)$) the symplectic structure takes the following form (here we take into account the condition that $\Omega$ is closed and the Hamiltonian vector field of $\lambda$ is $\frac{\partial}{\partial \varphi}$  or, equivalently,    $i_{\D/\D \varphi} \Omega = -d\lambda$):  
\begin{equation} \label{eq:Omega}
\begin{aligned}
\Omega &= f(x,y,\lambda) dx\wedge dy + d\lambda\wedge d\varphi +  d\lambda \wedge \bigl(P(x,y,\lambda) dx + Q(x,y,\lambda) dy\bigr)=\\
&=\omega_\lambda + d\lambda\wedge d\varphi + \mbox{ (additional terms)}.
\end{aligned}
\end{equation}

The form $\omega_\lambda = f(x,y,\lambda)dx\wedge dy$  can be considered as the restriction of $\Omega$ onto the common level of $\lambda$ and $\varphi$ (we assume that $\varphi=0$ but $\lambda$  varies and is considered as a parameter).   The other interpretation of $\omega_\lambda$ is that it is the one-parameter family of symplectic forms obtained from $\Omega$ by the reduction w.r.t. the Hamiltonian $S^1$-action  (or, using old-style terminology, w.r.t. the cyclic variable $\varphi$). Here is a more formal statement.

\begin{proposition} \label{pro:coord}
In a tubular neighborhood of a parabolic orbit $\gamma_0$ we can choose a coordinate system $x,y,\lambda,\varphi$ {\rm(}with  $\varphi\,\operatorname{mod} 2\pi \in \R / 2\pi\mathbb Z${\rm)}  such that $(x,y,\lambda)|_{\gamma_0} = (0,0,0)$ and our singular Lagrangian fibration is given by two functions
$$
F=\lambda \quad\mbox{and}\quad H=x^2 + y^3 + \lambda y
$$
and the symplectic form 
$$
\Omega = f(x,y,\lambda) dx\wedge dy   +   d\lambda \wedge d\varphi  +  \mbox{ (additional terms) }
$$
as in \eqref{eq:Omega}.  \hfill$\square$
\end{proposition}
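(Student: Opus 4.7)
The plan is to combine Zung's existence of a free Hamiltonian $S^1$-action in a tubular neighborhood $U(\gamma_0)$ of the parabolic orbit with the local normal form from Proposition \ref{prop:2.1}, arranging the coordinates so that the angle of this action is precisely the coordinate $\varphi$.

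First I would invoke Zung's theorem to obtain a free Hamiltonian $S^1$-action on $U(\gamma_0)$ and, after shifting and possibly changing the sign of its generator, take this generator to be $F$. Thus $F(P)=0$, the flow $X_F$ is $2\pi$-periodic, and the bifurcation diagram lies in $\{F\le 0\}$. Pick a real-analytic three-dimensional transversal $\Sigma_0$ through $P$ transverse to $X_F$ and use the flow of $X_F$ for time $\varphi\in\R/2\pi\mathbb{Z}$ to identify $U(\gamma_0)\simeq\Sigma_0\times S^1_\varphi$; by construction $X_F=\partial/\partial\varphi$. Since $\{H,F\}=0$, the function $H$ is $X_F$-invariant, i.e.\ independent of $\varphi$. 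Because $X_F\in\ker dF$ is transverse to $\Sigma_0$, the restriction $F|_{\Sigma_0}$ has nonzero differential at $P$, so I may pick coordinates $(x_0,y_0,\lambda)$ on $\Sigma_0$ with $\lambda=F|_{\Sigma_0}$; extending $(x_0,y_0,\lambda)$ to $U(\gamma_0)$ by $\varphi$-independence yields coordinates in which $F=\lambda$ and $H=H(x_0,y_0,\lambda)$.

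Next I would apply the normal-form construction of Section \ref{sec:2}. The steps in the proofs of Lemma \ref{lem:HF} and Proposition \ref{prop:2.1} (parametric Morse lemma in $(x_0,y_0)$, reduction to $y^3+b(\lambda)y+a(\lambda)$ via a versal-deformation argument, and straightening of the bifurcation diagram to the standard form \eqref{eq:canonbd1}) act on $H$ as a function of $(x_0,y_0,\lambda)$ alone, producing a new pair $(x,y)$ depending only on $(x_0,y_0,\lambda)$ together with a transformation $\widetilde H=\widetilde H(H,F)$ such that $\widetilde H=x^2+y^3+\lambda y$. Because the new $(x,y)$ are $\varphi$-independent, the vector field $\partial/\partial\varphi$, and hence the identity $X_F=\partial/\partial\varphi$, are preserved; $F=\lambda$ persists as well, so after renaming $\widetilde H$ by $H$ (as we are free to do when only interested in the fibration) we are in the desired setting.

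Finally, the announced form of $\Omega$ follows from the two conditions $i_{\partial/\partial\varphi}\Omega=-d\lambda$ (the Hamiltonian equation for $F=\lambda$) and $\mathcal{L}_{\partial/\partial\varphi}\Omega=0$ (invariance of $\Omega$ under $X_F$). Writing the general real-analytic $2$-form as
\[
\Omega=A_{xy}\,dx\wedge dy+A_{x\lambda}\,dx\wedge d\lambda+A_{y\lambda}\,dy\wedge d\lambda+A_{x\varphi}\,dx\wedge d\varphi+A_{y\varphi}\,dy\wedge d\varphi+A_{\lambda\varphi}\,d\lambda\wedge d\varphi,
\]
the first condition forces $A_{x\varphi}=A_{y\varphi}=0$ and $A_{\lambda\varphi}=1$, while the second makes the remaining coefficients independent of $\varphi$; this is exactly the expression \eqref{eq:Omega} with $f=A_{xy}$, $P=-A_{x\lambda}$, $Q=-A_{y\lambda}$. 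The main technical subtlety is to verify that the versality argument in the proof of Lemma \ref{lem:HF} remains valid with the prescribed $\lambda=F$ as the deformation parameter; this is automatic because that proof itself begins with canonical coordinates in which $F$ plays exactly the role of the parameter, so the steps go through verbatim in our $S^1$-adapted setting.
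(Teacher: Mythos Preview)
Your argument is correct and follows exactly the route the paper sketches informally in the paragraphs preceding the proposition: Zung's $S^1$-action gives the periodic integral $F$, an angle coordinate $\varphi$ with $X_F=\partial/\partial\varphi$ reduces the problem to three variables, Proposition \ref{prop:2.1} brings $H$ to the standard form, and the shape \eqref{eq:Omega} of $\Omega$ follows from $i_{\partial/\partial\varphi}\Omega=-d\lambda$ together with closedness (equivalently, with $\mathcal L_{\partial/\partial\varphi}\Omega=0$). Your explicit verification that the versal-deformation step of Lemma \ref{lem:HF} goes through with the prescribed parameter $\lambda=F$ is a welcome clarification, since that proof only uses the symplectic structure to eliminate the $p_2$-dependence of $H$, a step you have already carried out via $\{H,F\}=0$.
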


\begin{rem}
Without loss of generality we may assume that $f(x,y,\lambda)>0$    in \eqref{eq:Omega}. Indeed,  in order for the latter property to be fulfilled, we only need to replace $x$ with $-x$ if necessary.  We also notice that since $\Omega$ is closed,  formula \eqref{eq:Omega} can be rewritten as
$$
\Omega =  dX(x,y,\lambda) \wedge dy + d\lambda\wedge d\wt\varphi
$$
for a certain real-analytic function $X(x,y,\lambda)$ with $\frac{\D X}{\D x}>0$ and $\wt \varphi = \varphi + R(x,y, \lambda)$ for some real-analytic function $R(x,y,\lambda)$.
\end{rem}

It follows from Proposition \ref{pro:coord} that the function $F$ is uniquely defined (being a generator of the $S^1$-action), but $H$ is not.  However $H$ cannot be chosen arbitrarily because the bifurcation diagram for $F$ and $H$  must be of a very special form, namely \eqref{eq:canonbd1}.  If this condition is fulfilled then $H$ is allowed and, using Corollary \ref{cor:2.1},   we can modify Proposition \ref{pro:coord}  in the following way.  

\begin{proposition}\label{pro:coord2}
Consider a tubular neighborhood of a parabolic trajectory.  Let $H$ and $F$ be two functions defining our fibration and satisfying the following conditions:
\begin{itemize}
\item[(i)] the bifurcation diagram of $(H,F)$ is canonical, i.e., as in  \eqref{eq:canonbd1};
\item[(ii)] $F$ is  $2\pi$-periodic, i.e., is a generator of a free Hamiltonian $S^1$-action. 
\end{itemize}
Then there exists a coordinate system $(x,y,\lambda,\varphi)$  as in Proposition \ref{pro:coord}. \hfill$\square$
\end{proposition}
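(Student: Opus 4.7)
The plan is to build $\varphi$ first out of the free Hamiltonian $S^1$-action generated by $F$, and then apply the parametric normal-form argument of Lemma \ref{lem:HF} on a transversal, checking that this second step preserves the $S^1$-structure. Concretely, pick a real-analytic 3-dimensional local submanifold $T_0 \subset M^4$ through $P$ transverse to the Hamiltonian vector field $X_F$. Since $X_F$ generates a free $S^1$-action near $\gamma_0$, the map
$$
T_0 \times (\R / 2\pi\mathbb Z) \to U(\gamma_0), \qquad (p, \varphi) \mapsto \Phi_F^{\varphi}(p),
$$
is a real-analytic diffeomorphism onto a tubular neighbourhood of $\gamma_0$ with $X_F = \D/\D\varphi$, i.e.\ $i_{\D/\D\varphi}\Omega = -dF$. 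As $F$ is preserved by its own flow, it descends to $T_0$; and since $X_F \in \ker dF$ is transverse to $T_0$ we have $d(F|_{T_0})(P) \ne 0$. Choose analytic coordinates $(x_1, y_1, \lambda)$ on $T_0$ centred at $P$ with $F|_{T_0} = \lambda$ and transport them along $S^1$-orbits: this produces coordinates $(x_1, y_1, \lambda, \varphi)$ on $U(\gamma_0)$ in which $F = \lambda$, $X_\lambda = \D/\D\varphi$, and $(x_1, y_1, \lambda)|_{\gamma_0} = (0, 0, 0)$.

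Since $\{H, F\} = 0$ gives $\D H/\D\varphi = 0$, we have $H = H(x_1, y_1, \lambda)$, and the point $P$ is parabolic for $(H, \lambda)$ with canonical bifurcation diagram by hypothesis (i). Next I would re-run the argument of Lemma \ref{lem:HF} inside the 3-dimensional slice with $\lambda$ playing the role of a parameter: the elimination of the variable conjugate to $F$ has already been achieved by the $S^1$-reduction, so only the parametric Morse lemma and the versal deformation step remain. Both can be performed as an analytic change of variables $x = x(x_1, y_1, \lambda)$, $y = y(x_1, y_1, \lambda)$ that leaves $\lambda$ and $\varphi$ untouched, producing $H = \eta(x^2 + y^3 + b(\lambda) y + a(\lambda))$ with $\eta = \pm 1$ and $b(0) = 0$, $b'(0) \ne 0$. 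Comparing the resulting bifurcation diagram with the canonical form \eqref{eq:canonbd1}, and matching the elliptic and hyperbolic branches as in the proof of Proposition \ref{prop:lift}, forces $\eta = +1$, $a(\lambda) \equiv 0$ and $b(\lambda) = \lambda$, hence $H = x^2 + y^3 + \lambda y$.

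Because the last coordinate change is the identity in $\lambda$ and $\varphi$, the properties $F = \lambda$, $X_\lambda = \D/\D\varphi$ and the $2\pi$-periodicity of $\varphi$ established above are preserved. The shape \eqref{eq:Omega} of $\Omega$ then follows automatically from $i_{\D/\D\varphi}\Omega = -d\lambda$ together with the closedness of $\Omega$. The main technical point is verifying that the parametric Morse lemma and the versal-deformation step inside the proof of Lemma \ref{lem:HF} can indeed be arranged to act only on $(x_1, y_1)$ with $\lambda$ held fixed; once this is granted, compatibility with the previously built $S^1$-structure is automatic.
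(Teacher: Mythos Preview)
Your argument is correct and follows essentially the same route that the paper leaves implicit: build the $S^1$-coordinates $(\lambda,\varphi)$ first, then run the parametric normal-form argument of Lemma~\ref{lem:HF} on the transversal, and finally use the canonical bifurcation diagram (as in the proof of Proposition~\ref{prop:lift}) to pin down $\eta=+1$, $a\equiv 0$, $b(\lambda)=\lambda$. The ``main technical point'' you flag---that the parametric Morse lemma and versal-deformation step act only on $(x_1,y_1)$ with $\lambda$ fixed---is not an additional issue: it is already explicit in the paper's proof of Lemma~\ref{lem:HF}, where the new variables are of the form $x=x(p_1,q_1,\lambda)$ and $y=y(\hat y(q_1),\lambda)$ with $\lambda$ untouched.
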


\begin{rem}\label{rem:about2forms}
It follows from Proposition \ref{pro:coord} that if we are given two integrable systems with parabolic trajectories, we can always find a fiberwise real-analytic diffeomorphism between their tubular neighborhoods that respects the $S^1$-actions and corresponding periodic Hamiltonians.  This means that without loss of generality we may assume that we are given just one single fibration defined by $H$ and $F$ having canonical form \eqref{eq:canon1}  with two different symplectic forms given by \eqref{eq:Omega}  (i.e. such that $H$ and $F$ commute and the Hamiltonian vector field of $\lambda$ is $\frac{\partial}{\partial \varphi}$):
\begin{equation} \label{eq:omega} 
\Omega = \omega_\lambda + d\lambda\wedge d\varphi + \mbox{ (additional terms)}
\end{equation}
and
\begin{equation} \label{eq:tildeomega}
\widetilde\Omega=\tilde \omega_\lambda + d\lambda\wedge d\varphi + \mbox{ (additional terms)}.
\end{equation}

We still have two different integrable systems but after the above ``pre-identification'' they have many common properties. Namely,

\begin{enumerate}
\item[(i)] They have a common local coordinate system $(x,y,\lambda,\varphi)$ from Proposition \ref{pro:coord};
\item[(ii)] $F=\lambda$ is a $2\pi$-periodic integral for the both systems;
\item[(iii)] The $S^1$-actions defined by $F$ for $\Omega$ and $\wt\Omega$  coincide  (i.e.,  $X_F = \wt X_F=\frac{\partial}{\partial  \varphi}$ where $X_F$ and $\wt X_F$ denote the  Hamiltonian vector fields generated by $F$ w.r.t. $\Omega$ and $\wt\Omega$  respectively);
\item[(iv)] The bifurcation diagrams of these two systems coincide;
\item[(v)] The orientations and co-orientations of the parabolic trajectory  $\gamma_0(t){=}(0,0,0,\varphi{=}t)$ induced by $\Omega$ and $\wt\Omega$ coincide  (see Section~\ref{sec:5}, Theorem \ref{thm:5.4}). 
\end{enumerate}

We need to find out whether $\Omega$ can be transformed to $\widetilde\Omega$ by a suitable fiberwise diffeomorphism $\Phi$. First, we  impose a stronger condition on $\Phi$ by requiring that $\Phi$ preserves not only the fibration but also each particular fiber, i.e. the functions $H$ and $F$  (in other words, rearrangements of fibers are temporarily forbidden, i.e.  $\Phi$ induces the identity map on the base of the fibration). 
\end{rem}

The following statement reduces this 4-dim problem for $\Omega$ and $\widetilde\Omega$ to a similar problem for the reduced forms $\omega_\lambda$ and $\widetilde\omega_\lambda$ (in other words, we now reduce our ``two-degrees-of-freedom'' problem to a parametric ``one-degree-of-freedom'' problem).

Consider the singular fibration defined by the functions $H=x^2 + y^3 + \lambda y$ and $F=\lambda$.  This fibration is obviously Lagrangian w.r.t. any of the symplectic structures \eqref{eq:omega} and \eqref{eq:tildeomega} in a neighborhood of the parabolic orbit $\gamma_0=\{x=y=\lambda=0\}$.

\begin{proposition}\label{prop:3.3}
 The following two statements are equivalent.
\begin{enumerate}
\item[(i)] In a tubular neighborhood of the parabolic orbit $\gamma_0$ there is a (real-analytic) diffeomorphism $\Phi$  such that
\begin{itemize}
\item $\Phi$ preserves $H$ and $F$;
\item $\Phi^* (\widetilde\Omega) = {\Omega}$.
\end{itemize}
\item[(ii)] There exists a one-parameter family of local diffeomorphisms $\psi_\lambda (x,y)$ (real-analytic in $x$, $y$ and $\lambda$) leaving fixed the origin in $\R^2(x,y)$ at $\lambda = 0$ and such that, for each $\lambda\in\R$ close enough to $0$,
\begin{itemize}
\item $\psi_\lambda$ preserves $H(x,y,\lambda)$;
\item $\psi_\lambda^* (\widetilde\omega_\lambda) = {\omega}_\lambda$.
\end{itemize}
\end{enumerate}
\end{proposition}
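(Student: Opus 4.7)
The plan is to establish the two implications separately, with (ii)$\Rightarrow$(i) being the substantive direction.

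For (i)$\Rightarrow$(ii): The key observation is that $\Phi$ is automatically $S^1$-equivariant. Since $F=\lambda$ is the Hamiltonian of the same vector field $\partial/\partial\varphi$ with respect to both $\Omega$ and $\widetilde\Omega$ (property (iii) of Remark \ref{rem:about2forms}), and since $\Phi^*\widetilde\Omega=\Omega$ together with $\Phi^*F=F$, the diffeomorphism $\Phi$ intertwines the two Hamiltonian vector fields of $F$. Combined with $\Phi^*\lambda=\lambda$, this forces
$$
\Phi(x,y,\lambda,\varphi) = \bigl(\psi_\lambda(x,y),\,\lambda,\,\varphi + g(x,y,\lambda)\bigr)
$$
for some real-analytic family $\psi_\lambda$ and real-analytic function $g$. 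The condition $\Phi^*H=H$ then reads $\psi_\lambda^*H(\cdot,\cdot,\lambda)=H(\cdot,\cdot,\lambda)$, and restricting the 2-form identity $\Phi^*\widetilde\Omega=\Omega$ to a leaf $\{\lambda=\const,\varphi=\const\}$ yields $\psi_\lambda^*\widetilde\omega_\lambda=\omega_\lambda$. Finally, $\Phi$ must send the parabolic orbit $\gamma_0$ to itself (since $\gamma_0$ is determined intrinsically by $H$ and $F$), which forces $\psi_0(0,0)=(0,0)$.

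For (ii)$\Rightarrow$(i): I would seek $\Phi$ in the form $\Phi=\Phi_0\circ\Phi_1$, where
$$
\Phi_0(x,y,\lambda,\varphi)=(\psi_\lambda(x,y),\,\lambda,\,\varphi),\qquad
\Phi_1(x,y,\lambda,\varphi)=(x,y,\lambda,\,\varphi+g(x,y,\lambda)),
$$
with $g$ still to be determined. Both maps preserve $F=\lambda$; $\Phi_0$ preserves $H$ by the hypothesis $\psi_\lambda^*H=H$, and $\Phi_1$ preserves $H$ trivially. So the only remaining condition is $\Phi_1^*(\Phi_0^*\widetilde\Omega)=\Omega$. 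Unwinding the pullback of the decomposition \eqref{eq:tildeomega} and using $\psi_\lambda^*\widetilde\omega_\lambda=\omega_\lambda$, one sees that $\Phi_0^*\widetilde\Omega$ differs from $\Omega$ only through ``additional terms'' of the form $d\lambda\wedge(\cdot)$; that is,
$$
\Omega - \Phi_0^*\widetilde\Omega \;=\; d\lambda\wedge\sigma
$$
for a real-analytic 1-form $\sigma=\sigma_1(x,y,\lambda)\,dx+\sigma_2(x,y,\lambda)\,dy$. A direct computation gives $\Phi_1^*(\Phi_0^*\widetilde\Omega)-\Phi_0^*\widetilde\Omega = d\lambda\wedge d_{xy}g$, where $d_{xy}$ denotes the exterior differential in the $(x,y)$ variables only (the $g_\lambda\,d\lambda\wedge d\lambda$ term vanishes). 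Thus the problem reduces to the single equation $d_{xy}g=\sigma$.

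The main obstacle is solving this equation, which requires $\sigma$ to be closed in $(x,y)$ for each $\lambda$. This follows from the closedness of both symplectic forms: $d\Omega=0=d\Phi_0^*\widetilde\Omega$ gives $d(d\lambda\wedge\sigma)=0$, whose only nontrivial component is $-d\lambda\wedge d_{xy}\sigma$; vanishing of this component is exactly $d_{xy}\sigma=0$. The Poincar\'e lemma with real-analytic parameter $\lambda$ then produces the desired $g(x,y,\lambda)$ in a neighborhood of the origin, and $\Phi=\Phi_0\circ\Phi_1$ has all the required properties. The bulk of the remaining work is the bookkeeping in isolating $\sigma$ from the pullback formula; everything else is forced by the structural conditions listed in Remark \ref{rem:about2forms}.
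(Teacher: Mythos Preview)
Your proposal is correct and follows essentially the same route as the paper's proof: for (i)$\Rightarrow$(ii) you extract the form of $\Phi$ and read off $\psi_\lambda$, and for (ii)$\Rightarrow$(i) you first apply the family $\psi_\lambda$, observe that the defect $\Omega-\Phi_0^*\widetilde\Omega$ lies in $d\lambda\wedge(\cdot)$, and then correct it by a $\varphi$-shift chosen via closedness. The only differences are cosmetic (your $\Phi_0,\Phi_1$ versus the paper's $\Phi_1$ and subsequent shift, and your explicit invocation of the parametric Poincar\'e lemma where the paper simply says ``follows from closedness'').
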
 

Roughly speaking, this statement says that the additional terms in \eqref{eq:omega} and \eqref{eq:tildeomega} are not important and can be ignored.
We also remark that we can replace the conditions that $H$ and $F$ are preserved by saying that the fibration is preserved.

\begin{proof} 

The fact that (i) implies (ii) is almost obvious.  Indeed, since $\Omega$ and $\wt\Omega$ are of quite special form, $\Phi^*(\wt\Omega)=\Omega$ and $F=\lambda$ is preserved, then in local coordinates $x,y,\lambda,\varphi$, the diffeomorphism $\Phi$ takes the following form:

$$
\begin{aligned}
\tilde x &= \tilde x (x,y,\lambda), \\ 
\tilde y &= \tilde y (x,y,\lambda),\\
\tilde \lambda &= \lambda, \\
\tilde \varphi &= \varphi + R(x,y,\lambda),
\end{aligned}
$$
then if we consider the first two functions as a family of diffeomorphisms $\psi_\lambda (x,y)$, then we will immediately see that (ii) holds.
Since $\Phi$ preserves $H$ and $F$, it leaves invariant the set of such points $(x,y,\lambda,\varphi)$ that $dH(x,y,\lambda,\varphi)$ and $dF(x,y,\lambda,\varphi)$ are proportional. But for $\lambda = 0$ this set coincides with $\gamma_0$, so $\Phi$ maps $\gamma_0$ to itself. Therefore $\psi_0(0,0)=(0,0)$.

The proof of the converse statement consists of two steps. Assuming that $\psi_\lambda (x,y)$ satisfies the conditions from (ii), we define $\Phi_1$ as follows:
$$
\begin{aligned}
(\tilde x, \tilde y ) &= \psi_\lambda (x,y), \\
\tilde \lambda &= \lambda, \\
\tilde \varphi &= \varphi. 
\end{aligned}
$$
It is easily checked that, for this $\Phi_1$, the symplectic forms $\Phi_1^*(\wt\Omega)$ and $\Omega$ coincide up to additional terms, that is
\begin{equation} \label{eq:difference}
\Phi_1^* (\wt\Omega) - \Omega = d\lambda \wedge \bigl(P(x,y,\lambda) dx + Q(x,y,\lambda) dy\bigr).
\end{equation}
Hence, our goal is to show that these additional terms do not play any essential role and can be killed by an appropriate shift $\varphi \mapsto \varphi - R(x,y,\lambda)$ (without changing the other coordinates). In other words, we need to find $R(x,y,\lambda)$ such that $d\lambda \wedge dR(x,y,\lambda) = d\lambda \wedge \bigl(P(x,y,\lambda) dx + Q(x,y,\lambda) dy\bigr)$. The existence of such a function follows immediately from the closedness of the form  
$d\lambda \wedge \bigl(P(x,y,\lambda) dx + Q(x,y,\lambda) dy\bigr)$  (this form is the difference of two closed forms $\Phi_1^* (\wt\Omega)$ and $\Omega$. Finally, we define $\Phi$ as the composition of $\Phi_1$ and the above shift, 
and we get $\Phi^* (\wt\Omega) = \Phi_1^* (\wt\Omega) - d\lambda \wedge dR(x,y,\lambda) =\Omega$ due to \eqref {eq:difference}.

It remains to notice that, since $\psi_0(0,0)=(0,0)$ and $\gamma_0=\{x=y=\lambda=0\}$, we have $\Phi(\gamma_0)=\gamma_0$, thus $\Phi$ is defined in a neighborhood of $\gamma_0$ as required.
\end{proof}

Our next observation is that symplectic invariants do exist, in other words,   the desired map $\Phi$ (or, equivalently, the family $\psi_\lambda$) may not exist.  Moreover, the existence of just one map $\psi_0$ implies rather strong condition.  To show this, we treat the case $\lambda = 0$ in detail.



\section{The case $\lambda=0$,  one-degree of freedom problem} \label{sec:4}

In this Section, for notational convenience, we use a different sign in the definition of $H$.  Consider the function $H= y^3-x^2$  (in a neighborhood of the origin) and two symplectic forms $\omega_0$ and $\wt \omega_0$ (all of our objects are real-analytic).   We want to know necessary and sufficient conditions for the existence of a local diffeomorphism $\psi_0$   satisfying $\psi_0^* \wt \omega_0 = {\omega}_0$ and   (two versions):
\begin{itemize}
\item either preserving $H$  (strong condition);
\item or preserving the (singular) fibration defined by $H$ (weaker condition) (more formally,   $\psi_0^* (H) = h(H)$  where $h(H)$ is real-analytic and $h'(0)\ne 0$).
\end{itemize}

\begin{figure}[htbp]
\begin{center}
\includegraphics[width=0.40\textwidth]{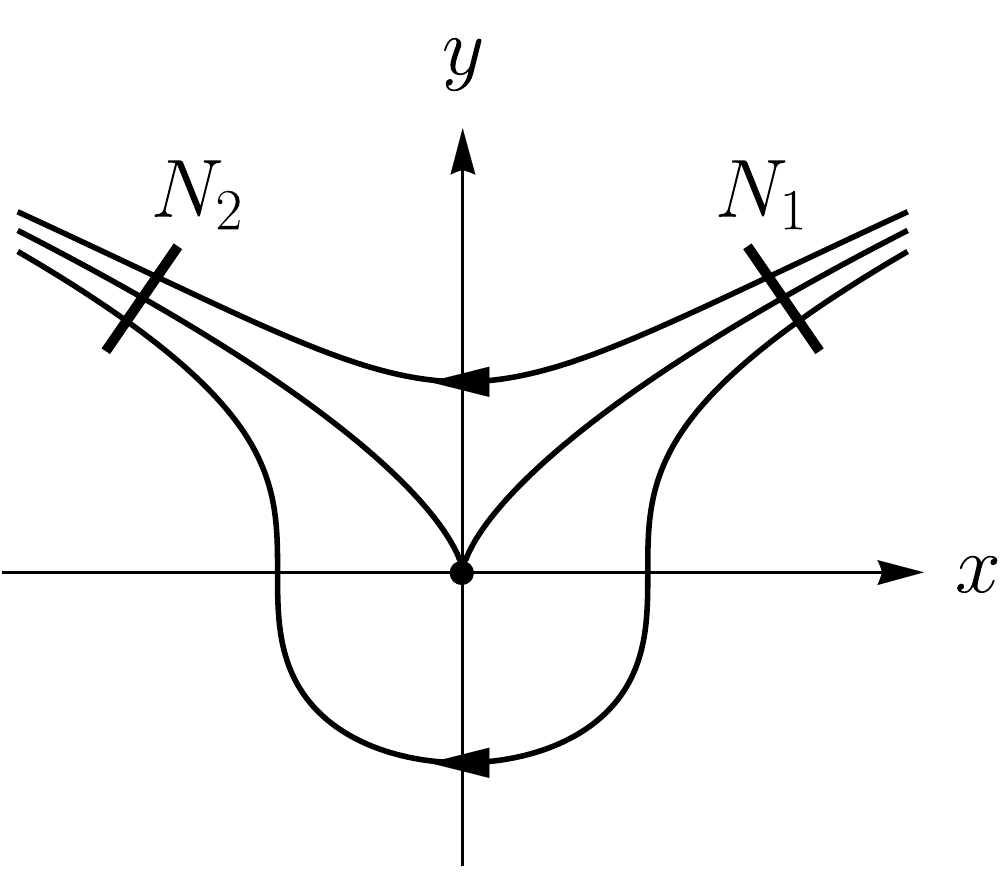}
\end{center}
\caption{Two cross-sections $N_1,N_2$ to the fibration defined by $H=y^3-x^2$}
\label {fig:1}
\end{figure}

The complex version of the first problem was studied in \cite{Francoise}, in this Section we adapt some of these results to the real case we are considering. In the following, $\mathbb R\{H\}$ and $\mathbb C\{H\}$ will denote, respectively, real-analytic germs and complex-analytic germs in the variable $H$ at $0$, i.e., convergent power series in the respective fields. Consider $H = y^3-x^2$ as a holomorphic function, we have:

\begin{proposition}[{\cite[Theorems 2.3 and 3.0]{Francoise}}] 
Any holomorphic $2$-form $\omega_0$  can be decomposed, in a sufficiently small neighborhood $U$ of $0 \in \mathbb{C}^2$, as follows
\begin{equation}\label{decomposition}
\omega_0 = \alpha(H) dx \wedge dy + \beta(H) y dx \wedge dy + d H \wedge d \eta
\end{equation}
for some holomorphic germ $\eta(x,y)$, and unique $\alpha, \beta\in \mathbb{C}\{H\}$.
\end{proposition}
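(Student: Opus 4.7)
The proposition asserts that the Brieskorn--Petrov $\mathbb{C}\{H\}$-module $\Omega^2_{(\mathbb{C}^2, 0)} / (dH \wedge d\mathbb{C}\{x,y\})$ is free of rank $2$ with basis $\{dx \wedge dy,\ y\, dx \wedge dy\}$, a standard fact for the isolated $A_2$ singularity $H = y^3 - x^2$. I will split the proof into \emph{existence} via a Nakayama-type reduction through the Milnor algebra, and \emph{uniqueness} via Gelfand--Leray periods over vanishing cycles of the Milnor fibre.

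\emph{Existence.} Write $\omega_0 = g(x, y)\, dx \wedge dy$ with $g$ holomorphic. The fundamental identities
$$dH \wedge dy = -2x\, dx \wedge dy, \qquad dH \wedge dx = -3y^2\, dx \wedge dy$$
show that $x\, dx \wedge dy$ and $y^2\, dx \wedge dy$ already lie in $dH \wedge d\mathbb{C}\{x,y\}$. Since the Milnor algebra $\mathbb{C}\{x, y\}/(H_x, H_y) = \mathbb{C}\{x, y\}/(x, y^2)$ has $\mathbb{C}$-basis $\{1, y\}$, a first reduction gives $g \equiv \alpha_0 + \beta_0 y \pmod{(x, y^2)}$. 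The residual terms have the shape $xu(x,y) + y^2 v(x,y)$; extracting $H$-multiples via $y^3 = H + x^2$ together with the identities above (for instance, a direct computation yields $y^3\, dx \wedge dy \equiv \tfrac{2}{5} H\, dx \wedge dy$ modulo $dH \wedge d\mathbb{C}\{x,y\}$) produces new coefficients $\alpha_1 + \beta_1 y$ accompanying $H$, plus a remainder of strictly higher weighted degree under the grading $w(x) = 3,\ w(y) = 2,\ w(H) = 6$. Iterating generates series $\alpha(H) = \sum_k \alpha_k H^k$, $\beta(H) = \sum_k \beta_k H^k$ and the function $\eta$, whose convergence follows either from the weighted-homogeneous structure (each step strictly increases weighted degree) or from Brieskorn's theorem on freeness of the Brieskorn lattice for isolated hypersurface singularities, combined with Nakayama's lemma over $\mathbb{C}\{H\}$.

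\emph{Uniqueness.} Suppose $\alpha(H)\, dx \wedge dy + \beta(H)\, y\, dx \wedge dy = dH \wedge d\eta$. For small $c \neq 0$ the Milnor fibre $F_c = \{H = c\}$ carries two independent vanishing $1$-cycles $\gamma_1(c), \gamma_2(c)$. The Gelfand--Leray residue of $dH \wedge d\eta$ equals $d(\eta|_{F_c})$, which is exact on $F_c$, so integrating the supposed identity over each $\gamma_i(c)$ yields the linear system
$$\alpha(c)\, I_i(c) + \beta(c)\, J_i(c) = 0, \qquad i = 1, 2,$$
with $I_i(c) = \int_{\gamma_i(c)} dx \wedge dy / dH$ and $J_i(c) = \int_{\gamma_i(c)} y\, dx \wedge dy / dH$. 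The period matrix $(I_i, J_i)_{i=1,2}$ is nondegenerate, a classical consequence of Picard--Lefschetz theory for the cusp (also checkable by beta-function integrals on $y^3 - x^2 = c$), forcing $\alpha \equiv \beta \equiv 0$.

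The main obstacle will be the convergence of the iterative reduction in the existence step: the formal Nakayama procedure a priori produces only a power series in $H$, and its convergence to a genuine holomorphic germ is not automatic. This requires exploiting either the weighted-homogeneity of the cusp or the full strength of the Brieskorn finiteness theorem. Once convergence is secured, the remainder of existence is routine bookkeeping, and uniqueness reduces cleanly to the nondegeneracy of a $2 \times 2$ period matrix of the Milnor fibre.
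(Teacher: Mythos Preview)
The paper does not prove this proposition at all: it is simply quoted from the reference \cite{Francoise} (Theorems 2.3 and 3.0), with no argument given. So there is no ``paper's proof'' to compare against; you have supplied a proof where the authors chose to cite one.

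Your sketch is the standard Brieskorn-module argument for an isolated quasi-homogeneous singularity, and it is essentially correct. The existence step is exactly the observation that $\Omega^2/(dH\wedge d\mathcal{O})$ is a free $\mathbb{C}\{H\}$-module whose rank equals the Milnor number $\mu=2$, with basis induced by a monomial basis of the Milnor algebra; your identification of the reduction identities and the weighted grading $w(x)=3$, $w(y)=2$ is the right mechanism, and for a quasi-homogeneous $H$ the convergence you flag as an obstacle is in fact automatic (each iteration raises weighted degree by at least $w(H)=6$, so the resulting series in $H$ has the same radius estimates as the original $g$). For uniqueness, the period-matrix argument over two independent vanishing cycles is the clean way to do it and matches how the paper itself later extracts invariants (Lemma~\ref{periodsExp} computes essentially these periods, and the nondegeneracy you need is equivalent to $C_0,C_1\neq 0$ there). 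So your proposal is sound; it is simply more than the paper undertakes.
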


\begin{rem} 
If $\omega_0$ is symplectic, then $\alpha(0) \neq 0$.
\end{rem}

In our case we are dealing with real objects $\omega_0$ and $H$, in this case  $\alpha(H), \beta(H)$ are  real-analytic, and $\eta(x,y)$ can be chosen to be real-analytic, as can be shown by taking the real part of Equation \eqref{decomposition}.

Choose two one-dimensional cross-sections $N_1, N_2$ to the fibration defined by $H$ as shown in Fig.1.  Each non-singular leaf $\tau_H$ of this fibration (with a given value of $H$) now will be interpreted as a trajectory of the Hamiltonian vector field $X_H = \omega_0^{-1}(dH)$  with respect to the symplectic form $\omega_0$.  For each trajectory $\tau_H$ we can measure the \emph{passage time}  $\Pi(H)$ from $N_1$ to $N_2$.  This function can be expressed as
\begin{equation}\label{eqPiLeray}
\Pi(H) = \int_{N_1}^{N_2}\frac{\omega_0}{d H}
\end{equation}
(integral taken along the trajectory $\tau_H$) where $\omega_0/dH$ is the \emph{ Gelfand-Leray form} associated to the pair $(\omega_0,H)$, i.e., any $1$-form $\gamma$ defined in the region $dH \neq 0$ and such that $dH\wedge \gamma= \omega_0$ (the form $\gamma$ is not uniquely defined, but its restriction to the level-sets $H= \mathrm{const}$ is unique).

We can similarly consider the \emph{area function} $\mathsf{area}(H)$ defined as the integral of  $\omega_0$ over the subset  
of $\{0 \leq H(x,y) \leq H\}$ bounded by the sections $N_1$, $N_2$.  As a consequence of Fubini's theorem,
one has
\begin{equation}\label{darea}
\frac{d \mathsf{area}(H)}{d H} =\Pi(H).
\end{equation}

Clearly,  $\Pi(H)$ is a real-analytic function defined for all  (small) $H$.   As $H$ tends to $0$, the passage time $\Pi(H)$ tends to infinity and it is natural to look at the asymptotic behaviour of $\Pi(H)$ at zero.

\begin{lemma}\label{periodsExp}
The function $\Pi(H)$ for $H>0$ can be written as
\begin{equation}
\label{eq:Pi}
\Pi(H) = a(H) H^{-1/6} + b(H) H^{1/6} + c(H), \quad  H>0,
\end{equation}
where $a, b, c \in \mathbb R\{H\}$. 
Moreover,
$a (H)= C_0 \alpha(H)$ and $b(H) = C_1 \beta(H)$ for some non-zero constants $C_0, C_1 \in \mathbb R$, with $C_0> 0$ and $C_1< 0$.
\end{lemma}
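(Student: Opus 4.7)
The plan is to reduce $\Pi(H)$ to two model integrals via linearity, and then compute those integrals by exploiting the scaling symmetry of $H = y^3 - x^2$.

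The Gelfand--Leray correspondence $\omega_0 \mapsto \omega_0/dH$ is linear in $\omega_0$, and therefore so is $\Pi(H)$. Plugging the decomposition \eqref{decomposition} into \eqref{eqPiLeray} gives
$$\Pi(H) \;=\; \alpha(H)\,\Pi_0(H) \;+\; \beta(H)\,\Pi_1(H) \;+\; \Pi_\eta(H),$$
where $\Pi_0, \Pi_1$ are the passage times for the model forms $dx\wedge dy$ and $y\,dx\wedge dy$, and $\Pi_\eta$ comes from $dH\wedge d\eta$. The coefficients $\alpha(H), \beta(H)$ pull out of the integrals because they are constant along each $\tau_H$. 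For the exact piece, a Gelfand--Leray primitive of $dH \wedge d\eta$ is $d\eta$ (modulo multiples of $dH$), whose restriction to $\tau_H$ is exact. Hence $\Pi_\eta(H) = \eta(N_2\cap\tau_H) - \eta(N_1\cap\tau_H)$, real-analytic in $H$ by transversality of the cross-sections; this entire term goes into $c(H)$.

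For the two model integrals, using $dH = 3y^2\,dy - 2x\,dx$ one finds Gelfand--Leray primitives $\mp dx/(3y^2)$ and $\mp dx/(3y)$ for $dx\wedge dy$ and $y\,dx\wedge dy$. Parametrising $\tau_H$ by $x$ with $y = (H+x^2)^{1/3}$ and $x_1<0<x_2$ the $x$-coordinates of $N_i\cap\tau_H$, we get
$$\Pi_0(H) = \mp\tfrac{1}{3}\!\!\int_{x_1}^{x_2}\!\frac{dx}{(H+x^2)^{2/3}},\qquad \Pi_1(H) = \mp\tfrac{1}{3}\!\!\int_{x_1}^{x_2}\!\frac{dx}{(H+x^2)^{1/3}}.$$
The substitution $x = H^{1/2}u$ (valid for $H>0$) yields $\Pi_s(H) = \mp H^{(1-2s)/2}\cdot\tfrac{1}{3}\int_{T_1}^{T_2}(1+u^2)^{-s}du$ with $s\in\{2/3, 1/3\}$ and $T_i = x_i/H^{1/2}$. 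Expanding $(1+u^2)^{-s} = |u|^{-2s}\sum_{k\ge 0}\binom{-s}{k}u^{-2k}$ for $|u|>1$ and integrating termwise gives, for $T>1$, the convergent expansion
$$\int_0^T (1+u^2)^{-s}du \;=\; \frac{T^{1-2s}}{1-2s} \;+\; K_s \;+\; \sum_{k\ge 1}\frac{\binom{-s}{k}}{1-2s-2k}\,T^{1-2s-2k},$$
where $K_s$ is the value at infinity (a genuine improper integral when $s>1/2$, and a Hadamard-regularised constant otherwise). Using odd symmetry to combine $[T_1, 0]$ with $[0, T_2]$, the leading $T^{1-2s}$ terms contribute $(x_2^{1-2s}+(-x_1)^{1-2s})/(1-2s)\cdot H^{(2s-1)/2}$, which after multiplication by $H^{(1-2s)/2}$ becomes an analytic constant in $H$; the $K_s$ term contributes $2K_sH^{(1-2s)/2}$, i.e.\ $H^{-1/6}$ for $s=2/3$ and $H^{1/6}$ for $s=1/3$; the remaining $T^{1-2s-2k}$ terms produce integer powers of $H$. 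We conclude $\Pi_0(H) = C_0H^{-1/6} + (\text{analytic})$ and $\Pi_1(H) = C_1H^{1/6} + (\text{analytic})$, with $C_0 = \mp 2K_{2/3}/3$ and $C_1 = \mp 2K_{1/3}/3$. Combined with the linearisation, this produces $\Pi(H) = C_0\alpha(H)H^{-1/6} + C_1\beta(H)H^{1/6} + c(H)$ with $c\in\R\{H\}$ absorbing all analytic contributions, as claimed.

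It remains to verify the signs. Non-vanishing and positivity of $C_0$ follow from $K_{2/3} = \int_{-\infty}^{\infty}(1+u^2)^{-2/3}du > 0$, combined with the convention that $\Pi_0$ is a positive passage time when $\alpha(0)>0$. For $C_1<0$, the key point is that the Hadamard-regularised constant $K_{1/3} = \lim_{T\to\infty}[\int_0^T(1+u^2)^{-1/3}du - 3T^{1/3}]$ is strictly negative: the function $G(T) := \int_0^T(1+u^2)^{-1/3}du - 3T^{1/3}$ satisfies $G(0) = 0$ and $G'(T) = (1+T^2)^{-1/3} - T^{-2/3} < 0$ for every $T>0$, so $G$ is monotonically decreasing and $K_{1/3} = \lim_{T\to\infty} G(T) < 0$. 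The main obstacle of the proof is precisely this sign-bookkeeping step: one must consistently track the direction of the Hamiltonian flow, the Gelfand--Leray sign convention, and the labelling of $N_1, N_2$ so that the universal constants $C_0, C_1$ emerge with the claimed signs $+$ and $-$ respectively.
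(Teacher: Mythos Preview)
Your argument is correct and follows the paper's overall strategy: linearise via the decomposition \eqref{decomposition}, absorb the exact piece $dH\wedge d\eta$ into the analytic remainder, and reduce to the two model integrals $\int (H+x^2)^{(j-2)/3}\,dx$ for $j=0,1$. Where you diverge from the paper is in evaluating those model integrals. The paper chooses the specific sections $N_1=\{x=1\}$, $N_2=\{x=-1\}$ and recognises the resulting integral as a hypergeometric function $F\bigl(\tfrac{2-j}{3},\tfrac12,\tfrac32;-\tfrac{1}{H}\bigr)$, then applies a standard connection formula to pass from argument $-1/H$ to argument $-H$; this yields the explicit constants $C_0=\tfrac{\sqrt\pi}{3}\Gamma(1/6)/\Gamma(2/3)$ and $C_1=\tfrac{\sqrt\pi}{3}\Gamma(-1/6)/\Gamma(1/3)$, from which the signs are immediate. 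Your route via the substitution $x=H^{1/2}u$ and the large-$|u|$ expansion is more elementary (no special-function identities needed) and your monotonicity argument for $K_{1/3}<0$ is a nice replacement for the Gamma-function sign check; the price is that you do not get closed formulas for $C_0,C_1$, and you have to handle the Hadamard-regularised constant in the case $s=1/3$ by hand.

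Two small cleanups: (i) with your definition, $K_s$ for $s>1/2$ equals the half-line integral $\int_0^\infty(1+u^2)^{-s}\,du$, not the full-line integral you wrote (this does not affect the sign); (ii) the phrase ``analytic constant in $H$'' for the boundary contribution $(x_2^{1-2s}+(-x_1)^{1-2s})/(1-2s)$ should read ``analytic function of $H$'', since the endpoints $x_i(H)$ generally vary with $H$ (analytically, by transversality) unless you fix the sections to be vertical lines as the paper does.
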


Before proving the lemma, we give some remarks:

\begin{itemize}

\item The functions in this representations are uniquely defined, i.e., if
$$
a(H) H^{-1/6} + b(H) H^{1/6} + c(H) = \tilde a(H) H^{-1/6} + \tilde b(H) H^{1/6} + \tilde c(H),
$$
then $a(H)=\tilde a(H)$,  $b(H)=\tilde b(H)$ and $c(H)=\tilde c(H)$.

\item If we change the sections $N_1$ and $N_2$ by a deformation in the class of such sections, then the function $\Pi(H)$ changes by adding a certain analytic function, given by the passage time between the old and the new sections. However, if we replace $N_1$ and $N_2$ by each other, then the function $\Pi(H)$ will be replaced by $-\Pi(H)$. This shows that the functions $a(H)$ and $b(H)$ (up to multiplying with $-1$ simultaneously) do not depend on the choice of the cross-sections $N_1$ and $N_2$. Since we are working with a symplectic form, we have $\alpha(0)\ne0$ and $a(0) \neq 0$, and we can be more specific: the functions $a(H)$ and $b(H)$ with $a(0)>0$ do not depend on the choice of the cross-sections $N_1$ and $N_2$.

\item In a similar way,  we can define the functions $\tilde a$, $\tilde b$ and $\tilde c$ for the second symplectic structure $\widetilde{\omega}_0$.   If $\psi$ preserves $H$ and transforms $\omega_0$ to $\widetilde{\omega}_0$,   then the Hamiltonian vector field $X_H$ will be transformed to the Hamiltonian vector field $\widetilde{X}_H = \widetilde{\omega}_0^{-1}(dH)$  (with the same Hamiltonian $H$).   Since $\psi$ does not preserve the cross-sections $N_1$ and $N_2$, the passage time $\widetilde{\Pi}(H)$  will, in general, differ from $\Pi (H)$ by adding some analytic functions (and, possibly, by multiplying with $-1$),  which shows that the functions $a(H)$ and $b(H)$  with $a(0)>0$ remain invariant under $\psi$, i.e. $a(H)=\tilde a(H)$,  $b(H)=\tilde b(H)$, provided that $\tilde a(0)>0$ too. In other words, $a(H)$ and $b(H)$ with $a(0)>0$ are symplectic invariants  (under the condition that $\psi$ preserves $H$).

\item It is easy to give an example of two symplectic structures producing two different pairs of functions $a$ and $b$ in the asymptotic decomposition \eqref{eq:Pi}.

\end{itemize}

\begin{proof}[Proof of Lemma \ref{periodsExp}] 
Consider the decomposition \eqref{decomposition}. Taking the integral of the Gelfand-Leray form we get:
$$\Pi(H) = \alpha(H)\int_{N_1}^{N_2} \frac{dx \wedge dy}{d H} + \beta(H) \int_{N_1}^{N_2}\frac{y dx \wedge dy}{dH} +  N_2^*\eta(H)-N_1^*\eta(H)$$
(the coefficients can be taken outside of the integral, since we integrate along a trajectory $\tau_H$ where $H$ is constant). The last two terms give a real-analytic contribution. To finish the proof it is sufficient to show that, for $H>0$
\begin{equation}\label{basicInt}
\int_{N_1}^{N_2} \frac{dx \wedge dy}{d H} - C_0 H^{-1/6} \in \mathbb{R}\{H\}, \qquad 
\int_{N_1}^{N_2} \frac{y dx \wedge dy}{d H} -  C_1 H^{1/6} \in \mathbb{R}\{H\},
\end{equation}
for some non-zero real constants $C_0, C_1$, so  that $a(H) = C_0\alpha(H)$ and $b(H)= C_1\beta(H)$. We can assume that $N_1 = \{x = 1\}$ and $N_2 = \{x = -1\}$. We have
$$
\frac{y^j dx\wedge dy}{dH} = -\frac{dx}{3 y^{2-j}},\quad j = 0,1.
$$
Hence, we are reduced to compute, for $j = 0,1$, the integral:
\begin{align*}
J_{j}(H)&=-\frac{1}{3}\int_{1}^{-1}y(H,x)^{j-2}\,dx = \frac{2}{3}\int_{0}^{1}(H+x^{2})^{\frac{j-2}{3}}\,dx\\
&=\frac{2}{3} H^{\frac{j-2}{3}}\int_{0}^{1}\left(1+\tfrac{x^2}{H}\right)^{\frac{j-2}{3}}\,dx 
=\frac{1}{3}H^{\frac{j-2}{3}}\int_{0}^{1}t^{\tfrac{1}{2}-1} \left(1+\tfrac{t}{H}\right)^{\frac{j-2}{3}}\,dt\\
&=\frac{2}{3}H^{\frac{j-2}{3}}F\left(\tfrac{2-j}{3},\tfrac{1}{2},\tfrac{3}{2}; -\tfrac{1}{H}\right)
\end{align*}
where $F(p,q,r;z)$ is the hypergeometric function. In this case we can use the connection formula (\cite[Eq.~(9.5.9)]{Lebedev})
\begin{align*}
F(p,q,r;z) &= \quad c_1(-z)^{-p}F(p, 1+p-r,1+p-q; 1/z)+\\
&\quad+c_2(-z)^{-q}F(q, 1+q-r,1+q-p; 1/z)
\end{align*}
where 
$$
c_1 = \frac{\Gamma(r)\Gamma(q-p)}{\Gamma(r-p)\Gamma(q)},\quad  
c_2 = \frac{\Gamma(r)\Gamma(p-q)}{\Gamma(r-q)\Gamma(p)}.
$$

This gives:
\begin{align*}
J_j(H) &= \tfrac{2}{3}H^{\frac{j-2}{3}}\Big(c_1H^{\frac{2-j}{3}}F(\tfrac{2-j}{3},\tfrac{1-2j}{6},\tfrac{7-2j}{6};-H)+c_2H^{1/2}F(\tfrac{1}{2},0,\tfrac{5+2j}{6};-H)\Big)\\
&= \tfrac{2}{3}c_1 F(\tfrac{2-j}{3},\tfrac{1-2j}{6},\tfrac{7-2j}{6};-H) + \tfrac{2}{3}c_2H^{\frac{2j-1}{6}}\\
&= C_j H^{\frac{2j-1}{6}} + d_j(H), \qquad d_j\in\mathbb R\{H\}, 
\end{align*}
where $C_0 = \frac{\sqrt\pi}{3} \frac{\Gamma(1/6)}{\Gamma(2/3)}$ and $C_1 = \frac{\sqrt\pi}{3}\frac{\Gamma(-1/6)}{\Gamma(1/3)}$. This proves \eqref{basicInt} as required.
\end{proof}

For $r \in \mathbb Q$, consider the operator $\phi_r: \mathbb R\{H\} \rightarrow \mathbb R\{H\}$ defined by $\phi_r: A(H) \mapsto A'(H) H + rA(H)$. If $r \notin \mathbb Z$ then $\phi_r$ is bijective.

\begin{cor}\label{areaExp}
The function $\mathsf{area}(H)$ for $H\ge0$ can be written as
\begin{equation}
\label{eq:area}
\mathsf{area}(H) = A(H) H^{5/6} + B(H) H^{7/6} +  C(H), \quad  H \ge 0,
\end{equation}
where $A, B, C\in \mathbb R\{H\}$ are the unique real-analytic germs such that
$$
a(H) = A'(H) H + \tfrac{5}{6}A(H),\quad b(H) =B'(H) H + \tfrac{7}{6}B(H),\quad c(H)=C'(H), \ C(0)=0,
$$
in other words $A= \phi_{5/6}^{-1}(a)$, $B = \phi_{7/6}^{-1}(b)$.  \hfill $\square$
\end{cor}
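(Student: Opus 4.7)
The plan is to derive Corollary \ref{areaExp} directly from Lemma \ref{periodsExp} via the identity $d\mathsf{area}/dH=\Pi(H)$ from \eqref{darea}, by matching the asymptotic expansions term by term. This is essentially antidifferentiation of a function with fractional-power singularities, where the non-integer exponents guarantee unique recovery of the real-analytic coefficients.

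\textbf{Step 1 (expansion of $\Pi$).} Substitute the expansion \eqref{eq:Pi} for $\Pi(H)$ and postulate an expansion of the form
\[
\mathsf{area}(H) = A(H)H^{5/6}+B(H)H^{7/6}+C(H), \qquad A,B,C\in\mathbb R\{H\},
\]
with $C(0)=0$ (since the region shrinks to a subset of the singular fiber as $H\to 0^+$, the area tends to $0$; the shift in $C$ is uniquely fixed by this normalisation). The exponents $5/6$ and $7/6$ are dictated by the requirement that differentiation should lower each fractional exponent by $1$ so as to reproduce $H^{-1/6}$ and $H^{1/6}$ from Lemma \ref{periodsExp}.

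\textbf{Step 2 (differentiate and match).} Formally differentiating the ansatz and comparing with \eqref{eq:Pi} gives
\[
\Pi(H)=\bigl(A'(H)H+\tfrac{5}{6}A(H)\bigr)H^{-1/6}+\bigl(B'(H)H+\tfrac{7}{6}B(H)\bigr)H^{1/6}+C'(H).
\]
The three summands carry the distinct fractional parts $-1/6$, $1/6$ and $0$ modulo $\mathbb Z$, so by the uniqueness of the representation \eqref{eq:Pi} (noted just after Lemma \ref{periodsExp}) the matching with $a(H)H^{-1/6}+b(H)H^{1/6}+c(H)$ yields, term by term,
\[
a(H)=\phi_{5/6}(A)(H),\qquad b(H)=\phi_{7/6}(B)(H),\qquad c(H)=C'(H).
\]

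\textbf{Step 3 (invertibility of $\phi_r$).} It remains to show that $A$ and $B$ exist and are unique as real-analytic germs. Writing $A(H)=\sum_{n\ge 0}a_nH^n$ one has $\phi_r(A)(H)=\sum_{n\ge 0}(n+r)a_nH^n$, so $\phi_r$ is a diagonal operator on $\mathbb R\{H\}$ with eigenvalues $n+r$. For $r\in\{5/6,7/6\}$ these are all non-zero (in fact bounded below by $r>0$), so $\phi_r$ is a bijection of $\mathbb R\{H\}$ with a continuous inverse that preserves convergence, giving $A=\phi_{5/6}^{-1}(a)$ and $B=\phi_{7/6}^{-1}(b)$. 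The germ $C$ is determined by the antiderivative of $c$ with $C(0)=0$. This completes the identification; uniqueness of $A,B,C$ follows from the uniqueness of the asymptotic representation in Lemma \ref{periodsExp}, applied to the derivative.

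\textbf{Main obstacle.} The computation is almost mechanical once Lemma \ref{periodsExp} is in hand; the only subtlety is justifying that the formal matching is \emph{valid}, i.e.\ that one may differentiate the fractional-power asymptotic expansion of $\mathsf{area}(H)$ term by term on $H>0$ and still obtain convergent power series coefficients. This is ensured by the fact that $\mathsf{area}(H)$ is itself an absolutely convergent integral that admits such a decomposition (an argument parallel to the proof of Lemma \ref{periodsExp}, applied to the primitive rather than the Gelfand--Leray form), combined with the normalisation $C(0)=0$; alternatively, one can derive the decomposition of $\mathsf{area}$ from scratch using the same hypergeometric computation as in Lemma \ref{periodsExp} but with one extra power of $y$ (equivalently, an extra factor in the integrand coming from integrating $\omega_0$ rather than the Gelfand--Leray form).
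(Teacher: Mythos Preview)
Your argument is correct and is exactly what the paper has in mind (the corollary is stated without proof, as an immediate consequence of Lemma \ref{periodsExp} and \eqref{darea}). The concern you flag as the ``main obstacle'' dissolves if you reverse the logic: rather than postulating an expansion for $\mathsf{area}$ and then worrying about term-by-term differentiation, first define $A=\phi_{5/6}^{-1}(a)$, $B=\phi_{7/6}^{-1}(b)$, $C(H)=\int_0^H c$, set $F(H)=A(H)H^{5/6}+B(H)H^{7/6}+C(H)$, and observe directly that $F'=\Pi$ on $\{H>0\}$ with $F(0)=0=\mathsf{area}(0)$, whence $F\equiv\mathsf{area}$ by integrating \eqref{darea}; no a~priori expansion of $\mathsf{area}$ is required.
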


\begin{thm}\label{Moser}
Let $\omega_0,\wt \omega_0$ be two real-analytic symplectic forms. Suppose that $\omega_0 - \wt\omega_0 = dH \wedge d\eta$ for some real-analytic function germ $\eta(x,y)$ at $0\in \mathbb R^2$. Then there is a local diffeomorphism $\psi$ at $0 \in \mathbb R^2$ such that $\psi^*H = H$ and $\psi^*\wt\omega_0= \omega_0$.
\end{thm}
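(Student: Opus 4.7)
The plan is to apply Moser's homotopy (path) method, adapted to the fact that $dH$ vanishes at the origin and hence the Hamiltonian vector field of $H$ vanishes there, which is exactly what allows the singular fibration to be preserved.

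First I would connect $\omega_0$ and $\wt\omega_0$ by the straight-line family
\begin{equation*}
\omega_t := (1-t)\omega_0 + t\wt\omega_0 = \omega_0 - t\, dH \wedge d\eta, \qquad t\in [0,1].
\end{equation*}
Since $dH$ vanishes at the origin, so does $dH\wedge d\eta$, hence $\omega_t|_0 = \omega_0|_0$ is symplectic; by continuity there is a neighbourhood $U$ of $0$ on which $\omega_t$ is non-degenerate uniformly in $t\in [0,1]$. All $\omega_t$ are closed since $dH\wedge d\eta$ is exact.

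Next I would solve the corresponding Moser equation. Differentiating the desired identity $\psi_t^*\omega_t = \omega_0$ reduces the problem to finding a time-dependent vector field $X_t$ on $U$ with
\begin{equation*}
\mathcal L_{X_t}\omega_t = -\tfrac{d}{dt}\omega_t = dH\wedge d\eta.
\end{equation*}
Using $dH\wedge d\eta = -d(\eta\, dH)$ and closedness of $\omega_t$, it suffices to impose $i_{X_t}\omega_t = -\eta\, dH$. Because $\omega_t$ is symplectic on $U$, this determines $X_t$ uniquely; in terms of the Hamiltonian vector field $X_H^t$ of $H$ with respect to $\omega_t$, one has simply $X_t = -\eta\, X_H^t$.

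Two properties of $X_t$ then fall out for free. First, $X_t(H) = -\eta\, X_H^t(H) = 0$, so $H$ is constant along the flow, giving $\psi_t^*H = H$. Second, since $dH(0)=0$ we have $X_H^t(0)=0$, hence $X_t(0)=0$; this is the crucial singular-point feature that guarantees that the flow of $X_t$ is well-defined on a (possibly smaller) neighbourhood of $0$ for all $t\in [0,1]$ and fixes the origin. Setting $\psi := \psi_1$ yields the required diffeomorphism, with $\psi^*\wt\omega_0 = \psi_1^*\omega_1 = \omega_0$.

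The only non-routine point is the passage through $t\in [0,1]$: one needs $\omega_t$ non-degenerate and the flow of $X_t$ defined uniformly on some common neighbourhood. Both are ensured by the vanishing of $dH$ at $0$ together with the vanishing of $X_t$ at $0$; compactness of $[0,1]$ then makes the argument a standard contraction/ODE existence statement and no further difficulty arises in the real-analytic category since $X_t$ is real-analytic in $(x,y,t)$.
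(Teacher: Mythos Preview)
Your proof is correct and follows essentially the same Moser path method as the paper: the same straight-line family $\omega_t$, the same defining equation $i_{X_t}\omega_t = -\eta\,dH$, and the same conclusion that the time-$1$ flow does the job. The only differences are cosmetic: you phrase $X_t = -\eta\,X_H^t$ to read off $X_t(H)=0$ directly, whereas the paper derives $L_{X_t}H=0$ from the identity $i_{X_t}(dH\wedge\omega_t)=0$; and you are a bit more explicit than the paper about why the flow exists up to $t=1$ (namely $X_t(0)=0$ because $dH(0)=0$).
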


\begin{proof} 
We adapt the proof of \cite[Theorem 2.1]{Francoise}. Put $\omega_t = \omega_0 + t(\wt\omega_0-\omega_0)$. In a neighborhood of zero, the forms  $\omega_t$, for $t \in [0,1]$, are also non-degenerate, indeed the equation $\omega_0 - \wt\omega_0 = dH \wedge d\eta$ implies that the two forms have the same sign/orientation at zero and near zero. 
Since $\omega_t$ is a convex combination of functions with the same sign, it will also be non-zero in a neighborhood of zero. Define a real-analytic time-dependent vector field $X_t$ by
$$
i_{X_t}\omega_t = -\eta d H.
$$
Let $\phi_t$ be the flow generated by such $X_t$, we can integrate it for $t \in [0,1]$. Notice that
$$
L_{X_t}\omega_t = i_{X_t}{d\omega_t } + d i_{X_t}\omega_t = d H\wedge d\eta = \omega_0-\wt\omega_0,
$$
therefore
$$
\frac{d}{d t} \phi_t^*\omega_t = \phi^*_t\left(L_{X_t}\omega_t + \frac{d}{d t} \omega_t\right) = 0,
$$
so that $\phi_1^*\wt\omega_0 = \omega_0$. Moreover $L_{X_t}H =0$, because of the equality:
\begin{equation}
0 = i_{X_t}(d H\wedge\omega_t) = (i_{X_t}d H)\omega_t+ d H\wedge i_{X_t}\omega_t = (L_{X_t}H)\omega_t+d H\wedge i_{X_t}\omega_t= (L_{X_t}H)\omega_t,
\end{equation}
and using  $\omega_t \neq 0$. This means that $H\circ \phi_t = H$. Finally take $\psi = \phi_1$.
\end{proof}

In the following we will specify as a subscript the symplectic structure $\omega_0$ in the notation for $\alpha,\beta$, $a,b$ and $A,B$, that is, writing $\alpha_{\omega_0},\beta_{\omega_0}$, $a_{\omega_0},b_{\omega_0}$ and $A_{\omega_0},B_{\omega_0}$. In the rest of the Section, for the reasons explained in the second and third remarks below Lemma \ref{periodsExp}, we will consider symplectic forms inducing a fixed orientation. In this regard we can consider, without loss of generality, only  symplectic forms $\omega$ satisfying  $\alpha_\omega(0) >0$.  Such a symplectic form is said to be \emph{positively-oriented}.

 In the above setting and notation we come to the following statement:

\begin{proposition}\label{Hpreserving}
Let $\omega_0, \wt \omega_0$ be positively-oriented symplectic forms. An $H$-preserving map $\psi$  such that $\psi^*\wt \omega_0={\omega}_0$ exists, if and only if the following conditions hold:
$$
\alpha_{\omega_0}(H) =  \alpha_{\wt \omega_0}(H) \mbox{ and } \beta_{\omega_0}(H)= \beta_{\wt \omega_0}(H)
$$
or, equivalently, $a_{\omega_0}(H) = a_{\wt \omega_0}(H) \mbox{ and } b_{\omega_0}(H)= b_{\wt \omega_0}(H) $ or   $A_{\omega_0}(H) =  A_{\wt \omega_0}(H) \mbox{ and } B_{\omega_0}(H)= B_{\wt \omega_0}(H)$.
\end{proposition}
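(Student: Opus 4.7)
The plan is to establish the two directions separately and then observe that the three pairs of conditions $(\alpha,\beta)$, $(a,b)$, $(A,B)$ are interchangeable from the previously recorded relations between them.

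For the sufficiency direction, I apply the decomposition \eqref{decomposition} to both symplectic forms. Under the assumption $\alpha_{\omega_0} = \alpha_{\wt\omega_0}$ and $\beta_{\omega_0} = \beta_{\wt\omega_0}$, the first two (polynomial in $H$) pieces coincide, so $\omega_0 - \wt\omega_0 = dH \wedge d(\eta_1 - \eta_2)$ for the real-analytic remainders $\eta_1, \eta_2$ supplied by the decomposition. Theorem \ref{Moser} then immediately produces the desired $H$-preserving diffeomorphism $\psi$ with $\psi^*\wt\omega_0 = \omega_0$.

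For the necessity direction, suppose $\psi$ exists. Because $\psi$ preserves $H$ and pulls $\wt\omega_0$ back to $\omega_0$, it sends each Hamiltonian trajectory of $X_H$ for $\omega_0$ onto a trajectory of $\wt X_H$ for $\wt\omega_0$ on the same level $\{H=\mathrm{const}\}$, and pulling back the Gelfand--Leray integral \eqref{eqPiLeray} gives $\Pi_{\omega_0}(H) = \Pi^{\psi(N_1)\to\psi(N_2)}_{\wt\omega_0}(H)$. The difference between this and $\Pi_{\wt\omega_0}(H)$ computed between the original sections $N_1, N_2$ is a passage time for $\wt X_H$ across a compact arc that is bounded away from the critical point, hence a real-analytic germ in $H$. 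Consequently $\Pi_{\omega_0}(H) - \Pi_{\wt\omega_0}(H) \in \mathbb R\{H\}$. The uniqueness of the decomposition \eqref{eq:Pi} (noted in the first remark after Lemma \ref{periodsExp}) forces the singular parts to match, $a_{\omega_0} = a_{\wt\omega_0}$ and $b_{\omega_0} = b_{\wt\omega_0}$, where the $\pm1$ ambiguity from the remarks is killed by the hypothesis that both forms are positively oriented, i.e.\ $\alpha_{\omega_0}(0), \alpha_{\wt\omega_0}(0) > 0$. The equivalent formulations in terms of $(\alpha,\beta)$ and $(A,B)$ follow purely algebraically: the relations $a = C_0\alpha$, $b = C_1\beta$ from Lemma \ref{periodsExp} with $C_0, C_1 \neq 0$ give one equivalence, while $A = \phi_{5/6}^{-1}(a)$, $B = \phi_{7/6}^{-1}(b)$ together with the bijectivity of $\phi_r$ for $r \notin \mathbb Z$ give the other.

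The main obstacle lies in the necessity direction: one needs to verify carefully that swapping the cross-sections from $N_1, N_2$ to $\psi(N_1), \psi(N_2)$ contributes only a real-analytic term to the passage time, so that the singular coefficients $a,b$ are genuinely transported by $\psi$, and that the positively-oriented normalization really eliminates the overall sign ambiguity flagged in the remarks following Lemma \ref{periodsExp}. Once those points are nailed down, sufficiency is an immediate consequence of Moser's trick in the form of Theorem \ref{Moser}, and the passage between the three equivalent pairs of invariants is essentially bookkeeping.
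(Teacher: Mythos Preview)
Your proposal is correct and follows essentially the same approach as the paper: sufficiency via the decomposition \eqref{decomposition} and Theorem \ref{Moser}, necessity via the invariance of the singular part of $\Pi(H)$ under change of cross-sections (the content of the second and third remarks after Lemma \ref{periodsExp}), and equivalence of the three pairs via Lemma \ref{periodsExp} and Corollary \ref{areaExp}. The only difference is cosmetic: the paper simply cites those remarks, while you unpack their content explicitly, including the role of positive orientation in fixing the sign.
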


\begin{proof} Sufficiency: suppose $\alpha_{\omega_0} = \alpha_{\wt \omega_0}$ and $\beta_{\omega_0} = \beta_{\wt \omega_0}$, then $\omega_0 - \wt \omega_0 = dH \wedge d\eta$ for some real-analytic germ $\eta$, and Theorem \ref{Moser} proves the assertion. From Lemma \ref{periodsExp} and Corollary \ref{areaExp} we know that equalities of any of these invariants  are equivalent.

Necessity: suppose $\psi$ exists, let us prove that the invariants coincide. Since $\psi$ preserves $H$ and sends $\wt \omega_0$ to $\omega_0$, we conclude (due to Lemma \ref {periodsExp} and the third remark below it) that $a_{\omega_0}(H) = a_{\wt \omega_0}(H)$ and $b_{\omega_0}(H) =  b_{\wt \omega_0}(H)$, which implies $\alpha_{\omega_0}(H) = \alpha_{\wt \omega_0}(H)$ and $\beta_{\omega_0}(H) = \beta_{\wt \omega_0}(H)$.
\end{proof}

It follows from the above proposition, together with the first remark below Lemma \ref{periodsExp}, that:

\begin{cor}\label{samePI}
Let $\omega_0, \wt \omega_0$ be positively-oriented symplectic forms.
An $H$-preserving map $\psi$  such that $\psi^*\wt \omega_0={\omega}_0$ exists if and only if $\Pi_{\omega_0}(H) - \Pi_{\wt\omega_0}(H)$, which is defined on $\{H>0\}$, extends to a real-analytic function in a neighborhood of $H=0$.\hfill$\square$
\end{cor}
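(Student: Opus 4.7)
The plan is to combine Proposition \ref{Hpreserving} with the uniqueness part of the decomposition \eqref{eq:Pi} from Lemma \ref{periodsExp} (the first remark below it). The point is that the statement concerns two different symplectic forms on the \emph{same} ambient 2-dimensional domain with the \emph{same} function $H=y^3-x^2$, so a single choice of cross-sections $N_1,N_2$ can be used to compute the passage time for both Hamiltonian vector fields $X_H$ and $\widetilde X_H$.

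First I would fix cross-sections $N_1, N_2$ as in Figure \ref{fig:1} and apply Lemma \ref{periodsExp} separately to $\omega_0$ and to $\widetilde\omega_0$, obtaining
\begin{equation*}
\Pi_{\omega_0}(H) = a_{\omega_0}(H)\, H^{-1/6} + b_{\omega_0}(H)\, H^{1/6} + c_{\omega_0}(H),
\end{equation*}
\begin{equation*}
\Pi_{\widetilde\omega_0}(H) = a_{\widetilde\omega_0}(H)\, H^{-1/6} + b_{\widetilde\omega_0}(H)\, H^{1/6} + c_{\widetilde\omega_0}(H),
\end{equation*}
where $a_{\omega_0}(0), a_{\widetilde\omega_0}(0)>0$ thanks to positive orientation ($\alpha_\bullet(0)>0$ combined with $C_0>0$). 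Subtracting gives
\begin{equation*}
\Pi_{\omega_0}(H)-\Pi_{\widetilde\omega_0}(H) = \bigl(a_{\omega_0}-a_{\widetilde\omega_0}\bigr)(H)\, H^{-1/6} + \bigl(b_{\omega_0}-b_{\widetilde\omega_0}\bigr)(H)\, H^{1/6} + \bigl(c_{\omega_0}-c_{\widetilde\omega_0}\bigr)(H),
\end{equation*}
an identity of real-analytic germs on $\{H>0\}$.

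Next I would use the uniqueness of such a decomposition (first bullet below Lemma \ref{periodsExp}): the germs $H^{-1/6}$, $H^{1/6}$, $1$ are linearly independent modulo $\mathbb R\{H\}$, so the right-hand side extends real-analytically to a neighborhood of $H=0$ if and only if $a_{\omega_0}\equiv a_{\widetilde\omega_0}$ and $b_{\omega_0}\equiv b_{\widetilde\omega_0}$ as elements of $\mathbb R\{H\}$. Applying Proposition \ref{Hpreserving} then immediately yields the equivalence with the existence of an $H$-preserving diffeomorphism $\psi$ with $\psi^*\widetilde\omega_0=\omega_0$.

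The only point requiring care — and really the sole place where ``positively oriented'' is used — is the sign normalization. Had the two forms induced opposite orientations, the expansions of $\Pi_{\omega_0}$ and $\Pi_{\widetilde\omega_0}$ would be computed along opposite traversals of the trajectories (or via opposite orderings of the cross-sections), producing an overall sign flip in $(a_\bullet,b_\bullet)$; the cancellation argument above would then fail. Requiring $a_{\omega_0}(0)>0$ and $a_{\widetilde\omega_0}(0)>0$ fixes a common sign convention and makes the comparison of the non-analytic parts unambiguous. I do not anticipate any further obstacle: the result is a two-line consequence of Proposition \ref{Hpreserving} together with linear independence of $H^{-1/6}$, $H^{1/6}$ over $\mathbb R\{H\}$.
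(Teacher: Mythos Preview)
Your proof is correct and follows exactly the approach the paper intends: the paper's own justification is simply ``It follows from the above proposition [Proposition~\ref{Hpreserving}], together with the first remark below Lemma~\ref{periodsExp}'', and you have spelled out precisely this argument---subtracting the two expansions from Lemma~\ref{periodsExp} and invoking uniqueness to conclude that analytic extendability of the difference is equivalent to $a_{\omega_0}=a_{\wt\omega_0}$ and $b_{\omega_0}=b_{\wt\omega_0}$, then applying Proposition~\ref{Hpreserving}. Your remark on why the positive-orientation hypothesis is needed is a useful clarification that the paper leaves implicit.
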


We can also reformulate this result in terms of {\it normal forms}.

\begin{proposition}
For $H= y^3-x^2$ and $\omega_0=f(x,y) dx\wedge dy$ there is a real-analytic local coordinate system $u, v$ and germs $\alpha,\beta\in\mathbb R\{H\}$ such that 
$$
H=v^3 -u^2\quad \mbox{and} \quad  \omega_0=  \alpha(H) \cdot du\wedge dv + \beta(H) \cdot v \,du\wedge dv.
$$  
For positively-oriented symplectic forms, the functions $\alpha(H)$ and $\beta(H)$ are uniquely defined   (the coordinates $u, v$ are not).\hfill$\square$
\end{proposition}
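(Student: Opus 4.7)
The plan is to combine the Francoise-type decomposition established at the beginning of Section~\ref{sec:4} with the Moser argument of Theorem~\ref{Moser}. Applying that decomposition to $\omega_0$ gives, in a neighborhood of $0\in\R^2$,
\begin{equation*}
\omega_0 \;=\; \alpha(H)\,dx\wedge dy \;+\; \beta(H)\, y\, dx\wedge dy \;+\; dH\wedge d\eta,
\end{equation*}
with $\alpha,\beta\in\R\{H\}$, $\alpha(0)\ne 0$, and some real-analytic germ $\eta(x,y)$. I would set $\omega_1:=\alpha(H)\,dx\wedge dy+\beta(H)\, y\, dx\wedge dy$. This $\omega_1$ is symplectic near the origin since $\omega_1|_0=\alpha(0)\,dx\wedge dy\ne 0$, and by construction $\omega_0-\omega_1=dH\wedge d\eta$. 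This is precisely the hypothesis of Theorem~\ref{Moser}, which then supplies a local real-analytic diffeomorphism $\psi$ at $0$ satisfying $\psi^*H=H$ and $\psi^*\omega_1=\omega_0$.

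The second step is to turn $\psi$ into the desired coordinate change. Set $u:=x\circ\psi$ and $v:=y\circ\psi$. Because $\psi$ is a local diffeomorphism, $(u,v)$ forms a real-analytic local coordinate system at the origin. Pulling back the expression defining $\omega_1$ along $\psi$ and using $H\circ\psi=H$ yields
\begin{equation*}
\omega_0 \;=\; \psi^*\omega_1 \;=\; \alpha(H)\, du\wedge dv \;+\; \beta(H)\, v\, du\wedge dv,
\end{equation*}
while the identity $H\circ\psi=H$ written coordinate-wise reads $v^3-u^2=H$. This produces the required normal form.

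For uniqueness of $\alpha$ and $\beta$ under the positive-orientation assumption $\alpha(0)>0$, I would appeal to Lemma~\ref{periodsExp} together with the second and third remarks following it. Once an orientation of $\R^2$ is fixed, the real-analytic germs $a_{\omega_0}, b_{\omega_0}$ with $a_{\omega_0}(0)>0$ appearing in the asymptotic expansion $\Pi(H)=a(H)H^{-1/6}+b(H)H^{1/6}+c(H)$ are determined by the pair $(\omega_0,H)$: changing the transverse sections only shifts $\Pi$ by a real-analytic function, and $H$-preserving symplectic equivalences do the same. In any coordinate system realizing the normal form of the proposition, Lemma~\ref{periodsExp} evaluates these invariants as $a_{\omega_0}=C_0\alpha$ and $b_{\omega_0}=C_1\beta$ with the same universal nonzero constants $C_0,C_1$. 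Hence $\alpha=a_{\omega_0}/C_0$ and $\beta=b_{\omega_0}/C_1$ are uniquely determined, even though the coordinates $u,v$ themselves are not.

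The substantive work has already been absorbed into the Francoise-type decomposition and Theorem~\ref{Moser}, so I do not anticipate a genuine obstacle at this stage. The one mild point to verify is that $\omega_1$ stays symplectic along the Moser path $\omega_t=(1-t)\omega_0+t\omega_1$ used in the proof of Theorem~\ref{Moser}, but this is immediate because both $\omega_0$ and $\omega_1$ share the common non-zero leading term $\alpha(0)\,dx\wedge dy$ at the origin, so every $\omega_t$ is non-degenerate on a common neighborhood of $0$.
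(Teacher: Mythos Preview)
Your argument is correct and is exactly the route the paper has in mind: the proposition is stated with a $\square$ precisely because existence is an immediate combination of the Fran\c{c}oise decomposition \eqref{decomposition} and Theorem~\ref{Moser}, while uniqueness is the content of Proposition~\ref{Hpreserving} (equivalently, of Lemma~\ref{periodsExp} and the remarks following it), which you invoke in the right way. The closing paragraph about non-degeneracy along the Moser path is already absorbed into the proof of Theorem~\ref{Moser} and can be omitted.
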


Let us now see what happens if $\psi$ does not preserve $H$, but transforms it to a function of the form $h(H)$, $h'(0)\ne 0$  (in fact $h'(0)>0$). Let $\omega_0, \wt \omega_0$ be positively-oriented symplectic forms. We consider necessary and sufficient conditions for the existence of a local diffeomorphism $\psi$ such that $\psi^*\wt \omega_0 = \omega_0$ and $\psi^*H = h ( H)$ with $h'(0) >0$,
i.e.\ a local symplectomorphism $\psi$ making the following diagram commutative:
$$
\begin{tikzcd}
  (\R^2, 0) \arrow{r}[]{\psi}  \arrow{d}{H}&  (\R^2, 0)    \arrow{d}[]{H}\\
(\R, 0) \arrow{r}{h}& (\R, 0).
\end{tikzcd}
$$

\begin{lemma}\label{relations} 
Suppose there exists $\psi$ such that $\psi^*\wt \omega_0 = \omega_0$ and $\psi^*H = h(H)$ with $h(H) = H\cdot g(H)$, $g(0)>0$. Then we have the following relations:
\begin{itemize}
\item[i)]
$$\begin{cases}A_{\omega_0}(H) = g(H)^{5/6}A_{\wt \omega_0}(h(H)),\\ 
B_{\omega_0}(H) = g(H)^{7/6}B_{\wt \omega_0}(h(H)),\end{cases}$$
\item[ii)] $$\begin{cases}\alpha_{\omega_0}(H) =  g(H)^{-1/6}\left( g'(H) H + g(H)\right) \alpha_{\wt \omega_0}(h(H)),\\ 
\beta_{\omega_0}(H) =  g(H)^{1/6}\left( g'(H) H + g(H)\right)\beta_{\wt \omega_0}(h(H)),\end{cases}$$
\item[iii)] $$\begin{cases}a_{\omega_0}(H) =  g(H)^{-1/6}\left( g'(H) H + g(H)\right) a_{\wt \omega_0}(h(H)),\\ 
b_{\omega_0}(H) =  g(H)^{1/6}\left( g'(H) H + g(H)\right)b_{\wt \omega_0}(h(H)).
\end{cases}$$
 \end{itemize}
\end{lemma}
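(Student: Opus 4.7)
The plan is to use the passage time $\Pi(H)$ as the main invariant. Since $\psi^*\wt\omega_0=\omega_0$ and $\psi^*H=h(H)$, the standard Hamiltonian-flow transformation rule gives $\psi_* X_H^{\omega_0} = (h^{-1})'(H)\, X_H^{\wt\omega_0}$, so $\psi$ maps trajectories of $X_H^{\omega_0}$ at level $H_0$ onto trajectories of $X_H^{\wt\omega_0}$ at level $h(H_0)$, with source-time equal to $h'(H_0)$ times target-time. Tracking passage times between the sections $N_1,N_2$ and their images $\psi(N_1),\psi(N_2)$, one obtains
$$\Pi_{\omega_0}(H) \;=\; h'(H)\,\Pi_{\wt\omega_0}(h(H)) + \rho(H),$$
where $\rho$ is a real-analytic germ at $0$: by the second remark after Lemma \ref{periodsExp}, $\Pi$ is only defined up to an additive analytic function, which absorbs the discrepancy between $\psi(N_i)$ and the reference sections used to define $\Pi_{\wt\omega_0}$.

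Next, substitute the asymptotic expansion of Lemma \ref{periodsExp}. Since $g(0)>0$, the germs $g(H)^{\pm 1/6}$ are real-analytic and $h(H)^{\pm 1/6} = H^{\pm 1/6}\,g(H)^{\pm 1/6}$. The identity above becomes
$$\bigl(a_{\omega_0}(H) - h'(H)g(H)^{-1/6}a_{\wt\omega_0}(h(H))\bigr)H^{-1/6} + \bigl(b_{\omega_0}(H) - h'(H)g(H)^{1/6}b_{\wt\omega_0}(h(H))\bigr)H^{1/6} \;=\; (\text{analytic}).$$
By the uniqueness of such a decomposition (first remark after Lemma \ref{periodsExp}), both bracketed coefficients vanish. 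Writing $h'(H) = g(H) + g'(H)H$ gives exactly the two equations of part (iii).

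Part (ii) is then immediate by Lemma \ref{periodsExp}, since $\alpha_\omega = a_\omega/C_0$ and $\beta_\omega = b_\omega/C_1$ with the same fixed nonzero constants $C_0,C_1$ for both symplectic structures. For (i), observe that $\phi_{5/6}$ is invertible on $\R\{H\}$, so it suffices to check that the candidate $\widehat A(H) := g(H)^{5/6}A_{\wt\omega_0}(h(H))$ satisfies $\phi_{5/6}(\widehat A) = a_{\omega_0}$; a direct chain-rule computation collapses $H\widehat A'(H) + \tfrac{5}{6}\widehat A(H)$, via the key identities $g+g'H = h'$ and $gH = h$, down to $g(H)^{-1/6}h'(H)\,a_{\wt\omega_0}(h(H))$, which equals $a_{\omega_0}(H)$ by (iii). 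The analogous argument with $\phi_{7/6}$ yields the relation for $B$.

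The step I expect to require the most care is justifying the real-analyticity of $\rho$ at $H=0$, i.e.\ that the combination $\Pi_{\omega_0}(H) - h'(H)\Pi_{\wt\omega_0}(h(H))$ extends analytically across the singular fiber despite each term individually having non-analytic $H^{-1/6}$ behavior; this is precisely the content of the ``change-of-sections'' remark after Lemma \ref{periodsExp}, applied to the pair of sections $(\psi(N_1),\psi(N_2))$ versus the reference sections for $\wt\omega_0$. Once $\rho$ is in hand, everything else reduces to uniqueness of the asymptotic decomposition and a mechanical chain-rule manipulation.
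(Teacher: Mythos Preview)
Your proof is correct and takes a genuinely different route from the paper's.

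The paper argues in the order (i) $\Rightarrow$ (ii) $\Rightarrow$ (iii): for (i) it uses the area function directly, writing $\mathsf{area}_{\omega_0}(H)=\mathsf{area}_{\wt\omega_0}(h(H))+D(H)$ with $D$ analytic and then matching coefficients in the expansion of Corollary~\ref{areaExp}; for (ii) it introduces the explicit rescaling map $r_h(x,y)=(g(H)^{1/2}x,\,g(H)^{1/3}y)$ of Lemma~\ref{lemmarh}, computes its Jacobian determinant $|J_{r_h}|=g(H)^{-1/6}(g'(H)H+g(H))$, and reads off the transformation of the characteristic series $\alpha,\beta$ by pulling back the decomposition~\eqref{decomposition}; (iii) then follows from (ii) via $a=C_0\alpha$, $b=C_1\beta$.

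You instead run everything through the passage time: you establish $\Pi_{\omega_0}(H)=h'(H)\Pi_{\wt\omega_0}(h(H))+\rho(H)$ with $\rho$ analytic by the Hamiltonian time-rescaling $\psi_*X_H^{\omega_0}=(h^{-1})'(H)X_H^{\wt\omega_0}$, extract (iii) by uniqueness of the $H^{\pm 1/6}$ decomposition, then get (ii) trivially and (i) by the neat observation that $\phi_{5/6}\bigl(g^{5/6}A_{\wt\omega_0}\circ h\bigr)$ collapses, via $h'=g+g'H$ and $h=gH$, to exactly $g^{-1/6}h'\cdot(a_{\wt\omega_0}\circ h)=a_{\omega_0}$. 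This avoids $r_h$ and the Jacobian computation entirely and is more economical; the paper's route, on the other hand, introduces $r_h$ as a concrete tool that it reuses in Proposition~4.9 and in the reduction of $a_{\omega_0}$ to a constant.

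One small point worth making explicit in your write-up: the analyticity of $\rho$ requires that $\psi(N_1),\psi(N_2)$ lie on the same branches of the cusp as the reference sections for $\Pi_{\wt\omega_0}$ (otherwise the change-of-section remark gives $-\Pi_{\wt\omega_0}$ plus analytic, not $+\Pi_{\wt\omega_0}$). This is guaranteed by the standing positive-orientation hypothesis in force throughout this section: $\psi^*\wt\omega_0=\omega_0$ with both forms positively oriented forces $\det d\psi(0)>0$, and comparing second- and third-order terms in $H\circ\psi=h(H)$ shows $d\psi(0)=\begin{pmatrix}g(0)^{1/2}&0\\ *&g(0)^{1/3}\end{pmatrix}$, so the $x$-sign (hence the branch) is preserved.
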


For proving Lemma \ref {relations}, we need the following:

\begin{lemma}\label{lemmarh}
For any real-analytic map $h(H)$ with $h(0)=0$ and $h'(0)>0$ 
there exists $\psi$  (local real-analytic diffeomorphism) such that 
$$
H(\psi (x,y)) = h (H(x,y)).
$$
In other words, any local diffeomorphism germ $H \mapsto h(H)$ at $0$ with $h'(0)>0$ is liftable.
\end{lemma}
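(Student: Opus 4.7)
The plan is to exploit the fact that $H = y^3 - x^2$ is quasi-homogeneous of weighted degree $6$ with weights $3$ for $x$ and $2$ for $y$: the rescaling $(x,y)\mapsto (s^3 x, s^2 y)$ transforms $H$ into $s^6 H$. This suggests looking for $\psi$ as a scaling of this type but with the parameter $s$ depending (real-analytically) on $H$.

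First, since $h(0)=0$ and $h'(0)>0$, factor $h(H) = g(H)\cdot H$ where $g \in \mathbb R\{H\}$ and $g(0) = h'(0) > 0$. Because $g(0) > 0$, the germs $g(H)^{1/2}$ and $g(H)^{1/3}$ are well-defined real-analytic functions of $H$ near $0$. I would then simply define
\begin{equation*}
\psi(x,y) := \Bigl( g\bigl(H(x,y)\bigr)^{1/2} \, x, \ g\bigl(H(x,y)\bigr)^{1/3} \, y \Bigr).
\end{equation*}

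A direct computation gives
\begin{equation*}
H\bigl(\psi(x,y)\bigr) = \bigl(g(H)^{1/3}\, y\bigr)^3 - \bigl(g(H)^{1/2}\, x\bigr)^2 = g(H)\,(y^3-x^2) = g(H)\cdot H = h(H),
\end{equation*}
which is exactly what we want.

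The only remaining thing is to check that $\psi$ is a local diffeomorphism at $0$. The Jacobian of $\psi$ at the origin is easy to compute: the contributions coming from differentiating $g(H)^{1/2}$ or $g(H)^{1/3}$ pick up a factor of $x$ or $y$ and so vanish at the origin, leaving the diagonal matrix $\operatorname{diag}\bigl(g(0)^{1/2},\, g(0)^{1/3}\bigr)$, which is non-singular because $g(0)>0$. The inverse function theorem then guarantees that $\psi$ is a real-analytic local diffeomorphism at the origin, completing the proof. There is no real obstacle here; the whole argument turns on spotting the quasi-homogeneity of $H$, after which the formula for $\psi$ is forced.
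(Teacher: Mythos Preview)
Your proof is correct and essentially identical to the paper's: both write $h(H)=g(H)H$ and define the lift as $(x,y)\mapsto\bigl(g(H)^{1/2}x,\,g(H)^{1/3}y\bigr)$ using the quasi-homogeneity of $H$. You additionally spell out why this map is a local diffeomorphism at the origin, which the paper leaves implicit.
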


\begin{proof} 
Let $h(H) = H\cdot g(H)$. Define $r_h(x,y):= \left( g(H(x,y))^{1/2}x, g(H(x,y))^{1/3}y\right)$, then
$$
H(r_h(x,y)) = g(H(x,y)) H(x,y) = h(H(x,y)).\eqno\qedhere
$$
\end{proof}

\begin{proof}[Proof of Lemma \ref{relations}]
i) The integrals below are taken over subsets bounded by the sections $N_1$, $N_2$. For any $H\ge0$, we have
\begin{align*}
\mathsf{area}_{\omega_0}(H) &= \int_{0 \leq H(x,y) \leq H} \omega_0 =  \int_{0 \leq H(x,y) \leq H} \psi^*\wt\omega_0 \\
&= \int_{\psi(0 \leq H(x,y) \leq H)} \wt\omega_0  + D(H) =  \int_{0 \leq H(x',y') \leq h(H)} \wt\omega_0  + D(H) \\
&= \mathsf{area}_{\wt \omega_0}(h(H))+  D(H), \qquad D\in
\mathbb{R}\{H\}.
\end{align*}
Substituting in \eqref{eq:area} and comparing coefficients gives (i).

ii) Put $\Psi := \psi \circ r_h^{-1}$, then $\Psi^*\wt \omega_0 = (r_h^{-1})^*\omega_0$ and $H\circ \Psi = H$. Therefore by Proposition \ref{Hpreserving} we have
$$
\wt \omega_0 - (r_h^{-1})^*\omega_0 = dH \wedge d\eta
$$
or again
$$
r_h^*\wt \omega_0 - \omega_0 = dH\wedge d\left( 
\frac{dh(H)}{dH} r_h^*  \eta\right).
$$

We want to understand the relationship between $\alpha_{\omega_0},\beta_{\omega_0}$ and $\alpha_{\wt \omega_0},\beta_{\wt \omega_0}$. The Jacobian matrix of the transformation $r_h$ is given by:
$$
J_{r_h} = \begin{pmatrix}
\frac{1}{2}g(H)^{-1/2}g'(H)\frac{\partial H}{\partial x} x  + g(H)^{1/2}  & \frac{1}{2}g(H)^{-1/2}g'(H)\frac{\partial H}{\partial y} x \\
\frac{1}{3}g(H)^{-2/3}g'(H)\frac{\partial H}{\partial x} y & \frac{1}{3}g(H)^{-2/3}g'(H)\frac{\partial H}{\partial y} y + g(H)^{1/3}
\end{pmatrix}
$$
and 
$$
|J_{r_h}| = \det J_{r_h} = g(H)^{-1/6}\left( g'(H) H + g(H)\right).
$$
This means that:
\begin{align*}
r_h^* \wt \omega_0 &=\alpha_{\wt \omega_0}(h(H)) |J_{r_h}|dx \wedge dy + \beta_{\wt \omega_0}(h(H))| J_{r_h}|g(H)^{1/3}ydx \wedge dy + dh(H) \wedge d(r_h^* \wt \eta).
\end{align*}
The last term can be written as $dH \wedge d \eta'$ with $\eta' = \frac{d h(H)}{dH} r_h^* \wt \eta$. By uniqueness of the characteristic series we get the relations.

\medskip
iii) Follows from (ii) and Lemma \ref{periodsExp}.
\end{proof}

Let's discuss the opposite statement:

\begin{proposition} 
Consider two  positively-oriented symplectic forms $\omega_0, \wt \omega_0$. Suppose there exists a real-analytic map $h(H) = H\cdot g(H)$ such that $g(0)>0$ and some of the three relations (i), (ii), (iii) of Lemma \ref{relations} is satisfied. Then there exists an $H$-fibration preserving map $\psi$ such that $\psi^*\wt \omega_0 = \omega_0$.
\end{proposition}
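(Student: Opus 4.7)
The plan is to factor the desired $\psi$ as $\psi = r_h \circ \psi_1$, where $r_h$ is the elementary lift of $h$ supplied by Lemma~\ref{lemmarh} and $\psi_1$ is an honest $H$-preserving symplectomorphism produced by Proposition~\ref{Hpreserving}. Since the three relations (i)--(iii) are equivalent by Lemma~\ref{periodsExp} and Corollary~\ref{areaExp}, it suffices to assume (ii). The key point will be to pull $\widetilde\omega_0$ back by $r_h$ and check that the resulting form has exactly the same $\alpha$ and $\beta$ invariants as $\omega_0$.

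Concretely, first invoke Lemma~\ref{lemmarh} to obtain a real-analytic germ $r_h$ with $H\circ r_h = h(H)$. Since $g(0)>0$, the determinant $|J_{r_h}| = g(H)^{-1/6}(g'(H)H+g(H))$ is non-zero at the origin, so $r_h$ is a local diffeomorphism and $r_h^*\widetilde\omega_0$ is a symplectic form. The computation carried out inside the proof of Lemma~\ref{relations}(ii) gives the decomposition
\[
r_h^*\widetilde\omega_0 = \alpha_{\widetilde\omega_0}(h(H))\,|J_{r_h}|\,dx\wedge dy + \beta_{\widetilde\omega_0}(h(H))\,|J_{r_h}|\,g(H)^{1/3}\,y\,dx\wedge dy + dH\wedge d\eta',
\]
so that
\[
\alpha_{r_h^*\widetilde\omega_0}(H) = g(H)^{-1/6}(g'(H)H+g(H))\,\alpha_{\widetilde\omega_0}(h(H)),\qquad
\beta_{r_h^*\widetilde\omega_0}(H) = g(H)^{1/6}(g'(H)H+g(H))\,\beta_{\widetilde\omega_0}(h(H)).
\]
By the assumed relation (ii), these right-hand sides are precisely $\alpha_{\omega_0}(H)$ and $\beta_{\omega_0}(H)$. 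In particular, at $H=0$ we get $\alpha_{r_h^*\widetilde\omega_0}(0)=\alpha_{\widetilde\omega_0}(0)\,g(0)^{5/6}>0$, so $r_h^*\widetilde\omega_0$ is positively oriented.

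Both $\omega_0$ and $r_h^*\widetilde\omega_0$ are now positively-oriented symplectic germs with identical invariants $\alpha,\beta$, so Proposition~\ref{Hpreserving} yields a local diffeomorphism $\psi_1$ at $0\in\R^2$ satisfying $\psi_1^*H=H$ and $\psi_1^*(r_h^*\widetilde\omega_0)=\omega_0$. Setting $\psi := r_h\circ\psi_1$, we obtain $\psi^*\widetilde\omega_0 = \omega_0$ and $\psi^*H = \psi_1^*(r_h^*H) = \psi_1^*(h(H)) = h(H)$, so $\psi$ preserves the $H$-fibration as required. The only genuine obstacle is making sure $r_h^*\widetilde\omega_0$ is symplectic and positively oriented so that Proposition~\ref{Hpreserving} actually applies; both facts follow from $g(0)>0$ and the hypothesis that $\widetilde\omega_0$ is positively oriented, as verified above.
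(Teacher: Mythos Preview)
Your proof is correct and follows essentially the same route as the paper: pull back $\widetilde\omega_0$ by the elementary lift $r_h$, use the computation from Lemma~\ref{relations} to see that the resulting form has the same $\alpha,\beta$ invariants as $\omega_0$, then apply Proposition~\ref{Hpreserving} and compose. The paper omits the explicit checks that $r_h$ is a local diffeomorphism and that $r_h^*\widetilde\omega_0$ is positively oriented, but otherwise the arguments are identical.
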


\begin{proof} 
We assume (ii) is satisfied, the other two cases are equivalent. Consider the map $r_h$ from the proof of Lemma \ref{lemmarh}. It satisfies $r_h^* H = h(H)$ and (trivially) transforms  $\wt \omega_0$ to $r_h^*\wt \omega_0$, therefore by Lemma \ref{relations}
\begin{align*}
\alpha_{r_h^*\wt \omega_0}(H) =  g^{-1/6}(H)\left( g'(H) H + g(H)\right)\alpha_{\wt \omega_0}(h(H)) = \alpha_{\omega_0}(H),\\
\beta_{r_h^*\wt \omega_0}(H)=  g^{1/6}(H)\left( g'(H) H + g(H)\right) \beta_{\wt \omega_0}(h(H)) = \beta_{\omega_0}(H).
\end{align*}
This shows that $\omega_0$ and $r_h^*\wt \omega_0$ have the same charateristic series, therefore
$$
\omega_0 - r_h^*\wt \omega_0 = d H\wedge d\eta,
$$
for some real-analytic germ $\eta$. As we know from the case of $H$-preserving maps, this equation implies the existence of a $H$-preserving diffeomorphism $\phi$ such that $\phi^*r_h^*\wt \omega_0 = \omega_0$. In conclusion, $\psi = r_h \circ \phi$ is the map we are looking for.
\end{proof}

Finally we show how one symplectic invariant survives in the case of $H$-fibration-preserving maps. Consider a positively-oriented symplectic form $\omega_0$. Suppose we can solve the equation
\begin{equation}\label{eqf1}
\alpha_{\omega_0}(H)=  K g(H)^{-1/6}\left( g'(H) H + g(H)\right), \qquad K \in \mathbb R,
\end{equation}
for $g(H)$. Then the rescaling map $r_h$, with $h(H) = H\cdot g(H)$, transforms $H$ to $h(H)$ and 
the symplectic form $\wt \omega_0 = (r_h^{-1})^*\omega_0$ to $\omega_0$, where  $\alpha_{\wt \omega_0}(H) = K$. After this, the other characteristic series $\beta_{\wt \omega_0}(H)$ survives as an invariant in the usual sense (of $H$-preserving local diffeomorphisms).

\begin{lemma}
The invariant $a_{\omega_0}(H)$ can be reduced to a constant.
\end{lemma}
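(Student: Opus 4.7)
The plan is to reduce the lemma to the solvability of equation \eqref{eqf1} with $a_{\omega_0}$ in place of $\alpha_{\omega_0}$; the two problems are equivalent because $a = C_0\alpha$ with $C_0>0$ by Lemma \ref{periodsExp}. Concretely, for some constant $K>0$ I will look for a real-analytic germ $g\in\R\{H\}$ with $g(0)>0$ satisfying
$$
a_{\omega_0}(H) \;=\; K\cdot g(H)^{-1/6}\bigl(g'(H)H + g(H)\bigr).
$$
Once this is achieved, the rescaling map $r_h$ constructed in the proof of Lemma \ref{lemmarh}, with $h(H)=Hg(H)$, transports $\omega_0$ to $\wt\omega_0:=(r_h^{-1})^*\omega_0$, and Lemma \ref{relations}(iii) forces $a_{\wt\omega_0}\equiv K$, which is precisely the desired reduction to a constant.

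The key step is to linearise this ODE via the substitution $u(H):=g(H)^{5/6}$. Differentiating gives $u'=\tfrac{5}{6}g^{-1/6}g'$, so
$$
g^{-1/6}(g'H+g) \;=\; \tfrac{6}{5}u'H + u \;=\; \tfrac{6}{5}\,\phi_{5/6}(u),
$$
and the ODE turns into the purely linear equation $\phi_{5/6}(u)=\tfrac{5}{6K}\,a_{\omega_0}$. Since $5/6\notin\mathbb Z$, the operator $\phi_{5/6}$ acts on $\R\{H\}$ by multiplying the coefficient of $H^n$ by $n+5/6$ and is therefore a linear bijection $\R\{H\}\to\R\{H\}$ (as recalled just before Corollary \ref{areaExp}; note that the division by $n+5/6$ only improves convergence). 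This produces the unique real-analytic solution $u=\tfrac{5}{6K}\,A_{\omega_0}$, where $A_{\omega_0}=\phi_{5/6}^{-1}(a_{\omega_0})$ is the germ already introduced in Corollary \ref{areaExp}.

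Finally, to realise $g:=u^{6/5}$ as a real-analytic germ with $g(0)>0$ I need $u(0)>0$. Evaluating $\phi_{5/6}(A_{\omega_0})(0)=\tfrac{5}{6}A_{\omega_0}(0)=a_{\omega_0}(0)$ yields $u(0)=a_{\omega_0}(0)/K$; the positive-orientation assumption combined with $C_0>0$ gives $a_{\omega_0}(0)>0$, so any $K>0$ (for instance $K=a_{\omega_0}(0)$) works. Then $g$ is real-analytic near $0$ with $g(0)>0$, $h(H)=Hg(H)$ satisfies $h'(0)=g(0)>0$, and the map $r_h$ delivers $\wt\omega_0$ with $a_{\wt\omega_0}\equiv K$. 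The only delicate point of the whole argument is precisely this positivity at the origin, which is exactly what allows the non-integer $6/5$-power of $u$ to be extracted analytically; the algebraic solvability of the ODE is automatic from the bijectivity of $\phi_r$ for $r\notin\mathbb Z$.
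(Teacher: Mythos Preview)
Your argument is correct and is essentially the paper's own proof: both rest on the substitution $u=g^{5/6}$, which turns the nonlinear equation into $\phi_{5/6}(u)=\mathrm{const}\cdot a_{\omega_0}$ and hence identifies $u$ with a constant multiple of $A_{\omega_0}=\phi_{5/6}^{-1}(a_{\omega_0})$. The paper simply writes down the resulting answer $g=A_{\omega_0}^{6/5}$ directly (corresponding to your choice $K=\tfrac{5}{6}$) and verifies it via Lemma~\ref{relations}(i), whereas you derive it by solving the linearised ODE; the content is the same.
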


\begin{proof}  
By assumption we have $a_{\omega_0}(0) > 0$. It follows from Corollary \ref{areaExp} that $A_{\omega_0}(0) > 0$ as well. Setting $g(H) = A_{\omega_0}(H)^{6/5}$ and $\wt\omega_0=(r_{h}^{-1})^*\omega_0$, we obtain from Lemma \ref {relations} that $A_{\wt \omega_0}(H) = 1$. With this choice of $h(H) = H\cdot g(H)$, we have $A_{\omega_0}(H) = g(H)^{5/6}$, therefore by Corollary \ref {areaExp}
\begin{align*}
a_{\omega_0}(H) &= A_{\omega_0}'(H)H + \tfrac{5}{6}A_{\omega_0}(H) \\
&= \tfrac{5}{6}g(H)^{-1/6}\left(g'(H)H +  g(H)\right),
\end{align*}
so that $a_{\wt \omega_0}(H)$ is constant  (and $\alpha_{\wt \omega_0}(H)$ as well).
\end{proof}

\begin{proposition} \label{prop:3.3.11}
A real-analytic singular Lagrangian fibration with one degree of freedom is symplectomorphic, in a neighborhood of an $A_2$ singularity, to (one of) the following model:
$$
H= y^3-x^2, \quad \omega_0= dx\wedge dy + f(y^3-x^2) \cdot y\, dx\wedge dy.
$$
Or, equivalently, in a neighborhood of an $A_2$ singularity (with one degree of freedom) we can always find local coordinates $x$ and $y$ such that the fibration is defined by the function $H=y^3-x^2$ and the symplectic structure takes the form $ \omega_0= dx\wedge dy + f(y^3-x^2) \cdot y\, dx\wedge dy$.  

In this representation, the real-analytic function $f(H)$ is uniquely defined. \hfill $\square$
\end{proposition}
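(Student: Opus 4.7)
The plan is to assemble the ingredients already developed in this section: the Fran\c coise decomposition, the rescalings $r_h$ of Lemma \ref{lemmarh}, the coefficient transformation rules of Lemma \ref{relations}, and Moser's theorem (Theorem \ref{Moser}).

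For existence, I would start with the decomposition
$$
\omega_0 = \alpha(H)\,dx\wedge dy + \beta(H)\,y\,dx\wedge dy + dH\wedge d\eta,
$$
where $\alpha(0)>0$ by positive orientation. I would then normalize $\alpha$ to the constant $1$ by applying $r_h$ with $h(H)=H\cdot g(H)$, choosing $g$ as the unique real-analytic germ with $g(0)=\alpha(0)^{6/5}$ solving $\alpha(H)=g(H)^{-1/6}\bigl(g'(H)H+g(H)\bigr)$. By Lemma \ref{relations}(ii) the form $\wt\omega_0=(r_h^{-1})^*\omega_0$ admits the decomposition $\wt\omega_0=dx\wedge dy+\wt\beta(H)\,y\,dx\wedge dy+dH\wedge d\wt\eta$ for some real-analytic $\wt\beta$ and $\wt\eta$. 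Finally, applying Theorem \ref{Moser} to $\wt\omega_0$ and $dx\wedge dy+\wt\beta(H)\,y\,dx\wedge dy$, which differ precisely by $dH\wedge d\wt\eta$, produces an $H$-preserving diffeomorphism that delivers the required normal form with $f=\wt\beta$.

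For uniqueness, suppose two such normal forms $\omega_i=dx\wedge dy+f_i(H)\,y\,dx\wedge dy$ $(i=1,2)$ are equivalent via an $H$-fibration-preserving symplectomorphism $\psi$ with $\psi^*H=h(H)=H\cdot g(H)$, $g(0)>0$. Since $\alpha_{\omega_i}\equiv 1$, Lemma \ref{relations}(ii) reduces to the ODE
$$
g(H)^{1/6}=g(H)+g'(H)\,H.
$$
Evaluating at $H=0$ yields $g(0)=1$. Differentiating $n$ times, evaluating at $0$, and using the inductive hypothesis $g^{(k)}(0)=0$ for $1\le k<n$, the Fa\`a di Bruno expansion collapses the left-hand side to $\tfrac{1}{6}g^{(n)}(0)$ while the right-hand side equals $(n+1)g^{(n)}(0)$; since $n+\tfrac{5}{6}\ne 0$ we conclude $g^{(n)}(0)=0$. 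By real-analyticity $g\equiv 1$, so $h(H)=H$ and $\psi$ is $H$-preserving; Proposition \ref{Hpreserving} then gives $f_1=\beta_{\omega_1}=\beta_{\omega_2}=f_2$.

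The main obstacle is the solvability of the rescaling ODE $g'=(\alpha(H)g^{1/6}-g)/H$ in the existence step. The equation is singular at $H=0$, but the initial value $g(0)=\alpha(0)^{6/5}$ makes the numerator divisible by $H$; the same Taylor recursion that appears in the uniqueness argument (now with $\alpha$ non-constant) shows that all coefficients of $g$ are uniquely determined by those of $\alpha$, and a standard majorant estimate secures the convergence of the resulting power series. Once this is in place, the proof becomes a direct combination of previously established statements.
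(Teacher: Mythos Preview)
Your proof is correct and follows the same overall architecture as the paper: Fran\c coise decomposition, then a rescaling $r_h$ to normalise $\alpha$, then Theorem~\ref{Moser} to kill the $dH\wedge d\eta$ term, with uniqueness coming from Lemma~\ref{relations} and Proposition~\ref{Hpreserving}.

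The one place where you work harder than necessary is in producing (and, for uniqueness, analysing) the rescaling germ $g$. You use relation~(ii) of Lemma~\ref{relations} for $\alpha$, which is differential, and are then forced to solve the singular ODE $Hg'=\alpha\,g^{1/6}-g$ by Taylor recursion and a majorant argument. The paper instead uses relation~(i) for the integrated invariant $A$, which is purely algebraic: since $A_{\omega_0}(H)=g(H)^{5/6}A_{\wt\omega_0}(h(H))$, setting $g=\bigl(\tfrac{5}{6C_0}A_{\omega_0}\bigr)^{6/5}$ gives $A_{\wt\omega_0}\equiv\tfrac{6C_0}{5}$ directly, hence $\alpha_{\wt\omega_0}\equiv 1$ by Corollary~\ref{areaExp}. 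For uniqueness the same trick makes the argument a one-liner: $\alpha_{\omega_i}\equiv 1$ forces $A_{\omega_i}\equiv\tfrac{6C_0}{5}$, so relation~(i) reads $g^{5/6}\equiv 1$, i.e.\ $g\equiv 1$. In effect your substitution $g\mapsto g^{5/6}$ linearises the ODE and recovers exactly this; recognising $A$ as the right variable from the start simply short-circuits the analysis.
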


This proposition says that as a complete symplectic invariant of an $A_2$ singular fibration with one degree of freedom we may consider one (real-analytic) function in one variable.   Since such a fibration appears as a symplectic reduction of the Lagrangian fibration near a parabolic orbit  (for $\lambda=0$), we conclude that parabolic orbits possess non-trivial symplectic invariants and the next section is aimed at describing ``all of them''.


\section{Parametric version} \label{sec:5}

Our next step is a parametric version of the above construction.  We now assume that $H$ depends on $\lambda$ as a parameter:
$$
H(x,y,\lambda)= H_\lambda(x,y)= x^2 + y^3 + \lambda y
$$
and for each value of $\lambda$ we consider a symplectic structure $\omega_\lambda = f(x,y,\lambda) dx\wedge dy$, $f>0$.

We first give necessary and sufficient conditions for the existence of 
a family of maps $\psi_\lambda$ from Proposition \ref{prop:3.3}.

Following the same idea as before, we choose two 2-dimensional sections $\mathcal N_1$ and $\mathcal N_2$ analogous to the above sections $N_1$ and $N_2$ (but now for all values of $\lambda$) and define the passage time 
$$
\Pi (H,\lambda) = \int^{\mathcal N_2(H,\lambda)}_{\mathcal N_1(H,\lambda)} \frac{\omega_\lambda}{dH_\lambda}
$$ 
for each trajectory
with parameters $H$ and $\lambda$, $(H,\lambda)\not\in\Sigma_{\mathrm{hyp}}$, see  Fig. 2.
Also we see that  for each $\lambda<0$ we have a family of closed trajectories also parametrized by $H$ and $\lambda$. Let us denote by $\Pi_\circ (H,\lambda)$ the period of these trajectories\footnote{Alternatively we may compute the area  $\mathsf{area}_\circ(H,\lambda)=2\pi I_\circ (H,\lambda)$ enclosed by such a trajectory.  This function can be understood as the action variable corresponding to this family of closed cycles.  Notice that $\Pi_\circ$ and $I_\circ$ are related by differentiation:  $\Pi_\circ(H,\lambda) =  2\pi \frac{\partial }{\partial H} I_{\circ}(H,\lambda)$, comp. \eqref{darea}.}. We can compute these functions for both forms $\omega_\lambda$ and $\widetilde\omega_\lambda$.   For $\widetilde\omega_\lambda$, we denote them by $\widetilde{\Pi}(H,\lambda)$ and $\widetilde{\Pi}_\circ(H,\lambda)$.

\begin{figure}[htbp]
\begin{center}
 \includegraphics[width=0.50\textwidth]{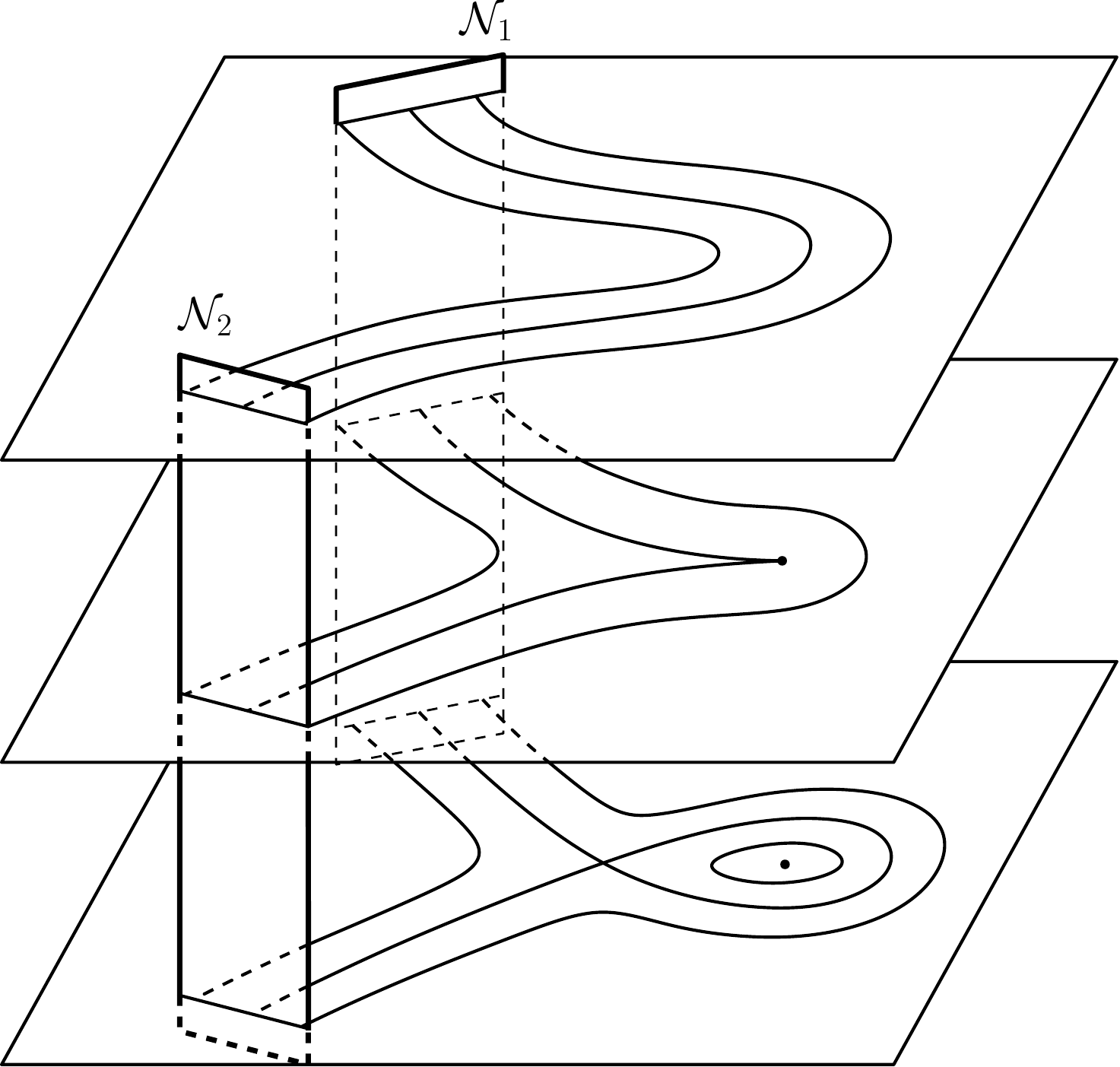}
\end{center}
\caption { Two cross-sections $\mathcal N_1,\mathcal N_2$ to the fibration near a parabolic orbit} \label {fig:2}
\end{figure}

\begin{proposition} \label{prop:5.1}
A family of local diffeomorphisms $\psi_\lambda$ from Proposition \ref{prop:3.3} exists if and only if 
\begin{itemize}
\item[(i)]
$\Pi(H,\lambda)- \widetilde{\Pi}(H,\lambda)$  extends to a real-analytic function in a neighborhood of the point  $H=0, \lambda=0$, 

\item[(ii)] $\Pi_\circ (H,\lambda)= \widetilde{\Pi}_\circ (H,\lambda)$.
\end{itemize}

\end{proposition}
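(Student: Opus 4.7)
The strategy is to reduce the problem, via a parametric version of Theorem \ref{Moser}, to establishing the single identity $\omega_\lambda - \widetilde\omega_\lambda = dH_\lambda \wedge d\eta$ for some real-analytic germ $\eta(x,y,\lambda)$ at $(0,0,0)$. Granted this, one sets $\omega_t := \omega_\lambda + t(\widetilde\omega_\lambda - \omega_\lambda)$, solves $i_{X_t}\omega_t = -\eta\, dH_\lambda$ for a real-analytic $\lambda$-family of time-dependent vector fields $X_t$, and integrates over $t\in[0,1]$ exactly as in the proof of Theorem \ref{Moser}. The parameter $\lambda$ plays a passive role, analyticity in $\lambda$ is preserved, and the resulting $\psi_\lambda := \phi_1|_{t=1}$ satisfies $\psi_\lambda^* H_\lambda = H_\lambda$ and $\psi_\lambda^* \widetilde\omega_\lambda = \omega_\lambda$.

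Necessity is the functorial direction. If $\psi_\lambda$ exists, it conjugates the two Hamiltonian flows of $H_\lambda$, so closed orbits are sent to closed orbits of the same intrinsic period, giving (ii). The image family $\psi_\lambda(\mathcal N_j)$ is a real-analytic deformation of the original $\mathcal N_j$, and the passage time between two real-analytically varying families of sections differs from that between the original ones by a real-analytic function of $(H,\lambda)$. Hence $\Pi - \widetilde\Pi$ extends real-analytically to a full neighborhood of $(0,0)$, which is (i).

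For sufficiency I plan first to establish a parametric analogue of Francoise's decomposition,
\[
\omega_\lambda \;=\; \alpha(H_\lambda,\lambda)\, dx\wedge dy \;+\; \beta(H_\lambda,\lambda)\, y\, dx\wedge dy \;+\; dH_\lambda \wedge d\eta_\omega,
\]
with $\alpha,\beta\in\mathbb R\{H,\lambda\}$ real-analytic in both variables and $\eta_\omega(x,y,\lambda)$ real-analytic; the proof runs Francoise's Weierstrass-type argument in the local ring $\mathbb R\{H,\lambda\}$ in place of $\mathbb R\{H\}$. The desired identity $\omega_\lambda - \widetilde\omega_\lambda = dH_\lambda\wedge d\eta$ then becomes equivalent to $\alpha_\omega\equiv\alpha_{\widetilde\omega}$ and $\beta_\omega\equiv\beta_{\widetilde\omega}$ as germs in $(H,\lambda)$. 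A parametric analogue of Lemma \ref{periodsExp}, obtained by repeating the hypergeometric computation with $\lambda$ treated as a parameter, expresses the singular part of $\Pi(H,\lambda)$ on each stratum of the bifurcation diagram as an $\mathbb R\{H,\lambda\}$-linear combination of the pair $(\alpha,\beta)$; similarly, for $\lambda<0$, the closed-orbit period $\Pi_\circ(H,\lambda)$ admits an expansion near the elliptic critical value whose coefficients are again linear in $(\alpha,\beta)$.

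Condition (i) then forces the singular parts of $\Pi-\widetilde\Pi$ to cancel, which matches $(\alpha,\beta)$ on the open region filled by non-closed trajectories, while (ii) independently matches $(\alpha,\beta)$ on the closed-orbit region for $\lambda<0$. By joint real-analyticity and the identity theorem, these two partial matchings force equality of the characteristic germs in a full neighborhood of $(0,0)$, which via the decomposition delivers the required $\omega_\lambda - \widetilde\omega_\lambda = dH_\lambda\wedge d\eta$. The main obstacle will be the parametric version of Lemma \ref{periodsExp}: one must uniformly handle the three qualitatively distinct regimes of $H_\lambda$ — no critical points for $\lambda>0$, a cusp at $\lambda=0$, and two non-degenerate critical points for $\lambda<0$ — and show that the coefficients extracted from $\Pi$ on the non-closed side and from $\Pi_\circ$ on the closed side glue into a single real-analytic pair $(\alpha,\beta)$ across the cusp. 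This is where conditions (i) and (ii) genuinely interact and where the passage from the one-variable theory of Section \ref{sec:4} to the parametric setting is most delicate.
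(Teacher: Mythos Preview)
Your overall strategy --- a parametric Fran\c{c}oise decomposition followed by the parametric Moser trick --- is a legitimate alternative to the paper's route and can be made to work, but the crucial step (extracting $\alpha_\omega=\alpha_{\widetilde\omega}$ and $\beta_\omega=\beta_{\widetilde\omega}$ from (i) and (ii)) is mis-sketched. You assert that (i) alone matches the pair $(\alpha,\beta)$ on the non-closed region and (ii) alone matches it on the closed region. This is false for $\lambda\ne 0$: away from the cusp, the only singularity of $\Pi$ is a simple logarithm along $\Sigma_{\mathrm{hyp}}$, whose coefficient is a \emph{single} function of $(H,\lambda)$; likewise $\Pi_\circ$ is a single function on the swallow-tail. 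Neither by itself pins down two unknowns. The correct mechanism --- and the one the paper uses --- is that the logarithmic coefficient of $\Pi$ near $\Sigma_{\mathrm{hyp}}$ equals, up to a constant, the period of $\omega_\lambda/dH_\lambda$ over the \emph{complex} vanishing cycle invisible in the real picture (Proposition~\ref{prop:ap2} in the Appendix). Thus (i) forces one generating period integral to match and (ii) forces the other; together they kill both periods of $(\omega_\lambda-\widetilde\omega_\lambda)/dH_\lambda$, and the parametric Brieskorn/Fran\c{c}oise decomposition then yields $\omega_\lambda-\widetilde\omega_\lambda=dH_\lambda\wedge d\eta$. Note also that your plan to ``repeat the hypergeometric computation with $\lambda$ as a parameter'' does not go through: for $\lambda\ne 0$ the fiber is an elliptic curve and the periods are elliptic integrals, not hypergeometric; the log-coefficient identity is the right substitute.

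The paper's own proof avoids both the parametric decomposition and any period asymptotics. It writes the candidate symplectomorphism explicitly as $\psi_\lambda(Q)=\widetilde\sigma^{\,r(Q)}(Q)$ with $r(Q)=t(Q)-\widetilde t(Q)$ the difference of flow-times from a fixed section $\mathcal N_1$, then complexifies. Well-definedness of $r$ on the complex leaves is equivalent to coincidence of all period integrals, which is exactly (i)$+$(ii) via the log-coefficient identity; finally $r$ extends holomorphically across the codimension-two critical locus by the second Riemann extension theorem. This bypasses the uniform handling of the three regimes of $\lambda$ that you correctly flag as the main difficulty in your approach.
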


\begin{proof} We need to justify the ``if'' part only.
First of all we notice that, for each $\lambda$  (if we consider each slice $\{\lambda=\mbox{const}\}$ separately),  a map $\psi_\lambda$ exists. 
Indeed, for $\lambda>0$, there are no obstructions for the existence of $\psi_\lambda$ at all,
since our fibration is regular.   For $\lambda =0$, the existence of $\psi_\lambda$ was proved in Corollary \ref{samePI}.
As for $\lambda <0$,  this property follows from non-degeneracy of singular points (see \cite{Dufour}).

We only need to ``combine'' all these maps into one single $\Psi(x,y,\lambda) = \psi_\lambda (x,y)$ in such a way that $\Psi$ is real-analytic with respect to all variables (including $\lambda$).

To that end, we notice first of all that the maps $\psi_\lambda$ can be chosen in such a way that each section $N_{1, \lambda}=\{ (x,y,\lambda)\in \mathcal N_1 \ \mbox{with $\lambda$ fixed}\}$   (i.e. the intersection of $\mathcal N_1$ with the corresponding $\lambda$-slice) is mapped to itself, i.e., $\psi_\lambda|_{N_{1, \lambda}} = \mathrm{id}$.
This choice (of the initial data) makes our construction unique. In other words, we may assume without loss of generality that $\Psi$ leaves $\mathcal N_1$ fixed.  
  
Let $\sigma^t$ and $\widetilde\sigma^t$ denote the Hamiltonian flows of $H_\lambda$ w.r.t.  $\omega_\lambda$ and $\widetilde\omega_\lambda$ respectively. Since $H$ is preserved and 
$\psi_\lambda^*(\widetilde\omega_\lambda)=\omega_\lambda$,
we conclude that $\psi_\lambda$ sends the Hamiltonian flow of $H$ w.r.t $\omega_\lambda$ to that w.r.t. $\widetilde\omega_\lambda$, i.e., the following relation holds
$$
\psi_\lambda \circ \sigma^t = \widetilde\sigma^t \circ \psi_\lambda.
$$
This relation implies a simple ``explicit'' formula for $\psi_\lambda$ (for those points $Q$ which can be obtained from $\mathcal N_1$ by shifting along the flow $\sigma^t$).  Namely,  let  $Q = \sigma^{t(Q)}(Q_0)$ with $Q_0 \in \mathcal N_1$. Then applying the above relation to the point $Q$  with  $t=-t(Q)$ we get
$$
\psi_\lambda \circ \sigma^{-t(Q)} (Q) = \widetilde\sigma^{-t(Q)} \circ \psi_\lambda (Q)\\
$$
or, equivalently,
$$
\psi_\lambda (Q) = \widetilde\sigma^{t(Q)} \circ \psi_\lambda \circ \sigma^{-t(Q)} (Q) 
$$
and, using that $\psi_\lambda \circ \sigma^{-t(Q)} (Q) = \psi_\lambda  (Q_0) = Q_0 =  \sigma^{-t(Q)} (Q)$, we finally get:
\begin{equation}
\label{eq:mainsiplecto}
\psi_\lambda (Q) = 
\widetilde\sigma^{t(Q)} \circ \sigma^{-t(Q)} (Q),
\end{equation}
where the time $t(Q)$ is chosen in such a way that   $\sigma^{-t(Q)} (Q)\in \mathcal N_1$.  Notice that the family $\psi_\lambda$ so defined automatically satisfies the required conditions (ii) from Proposition \ref{prop:3.3}  and is locally analytic w.r.t. all the variables  (including the parameter $\lambda$)  everywhere where it makes sense.  The problem, however, is that  \eqref{eq:mainsiplecto} works neither at the singular points  nor at the points lying on ``small'' closed trajectories that appear for $\lambda <0$ (the reason is obvious: the Hamiltonian flow $\sigma^t$ starting from $\mathcal N_1$ does not reach them). 

Below, we will use a slightly different version of formula \eqref{eq:mainsiplecto}.  Notice that  $Q$ can also be obtained from $Q_0\in \mathcal N_1$ by shifting  along the other Hamiltonian flow $\widetilde{\sigma}^t$, that is, $Q= \widetilde{\sigma}^{\tilde{t}(Q)}(Q_0)$ for some  $\tilde{t}(Q)\in \mathbb R$.  Hence,
$$
\psi_\lambda (Q) = 
\widetilde\sigma^{t(Q)} \circ \sigma^{-t(Q)} (Q) =  \widetilde\sigma^{t(Q)-\tilde t(Q)} \circ \widetilde\sigma^{\tilde t(Q)} \circ \sigma^{-t(Q)} (Q) =
 \widetilde\sigma^{t(Q)-\tilde t(Q)} \circ \widetilde\sigma^{\tilde t(Q)} (Q_0)=  \widetilde\sigma^{t(Q)-\tilde t(Q)} (Q), 
$$
or, finally,
\begin{equation}
\label{eq:mainsiplecto2}
\psi_\lambda (Q) = \widetilde\sigma^{r(Q)} (Q), \qquad \mbox{where $r(Q) = t(Q) - \tilde t(Q)$.}
\end{equation}

This formula has a very natural meaning.  If $Q$ can be obtained by shifting a certain point $Q_0\in \mathcal N_1$  along the flows $\sigma^t$ and $\widetilde \sigma^t$, then $\psi_\lambda$ simply shifts $Q$ along $\widetilde \sigma^t$ by time  $r(Q) = t(Q) - \tilde t(Q)$, where $t(Q)$ (resp.  $\tilde t(Q)$) is the time necessary for the flow  $\sigma^t$ (resp. $\tilde\sigma^t$) to reach $Q$ starting from the section $\mathcal N_1$.

Our goal is to show that this formula extends to a neighborhood of the parabolic point up to a well defined real-analytic map (in the sense of all the variables $x,y$ and $\lambda$).

To that end,  we will use a ``complexification trick''.   Since all the objects under consideration are real-analytic we can naturally complexify them, that is, we may think of $x,y,\lambda$ as complex variables, $H$ and $F$ as complex functions, $\omega$ and $\widetilde \omega$ as complex symplectic forms, etc.  We will also assume that the section $\mathcal N_1$ is given by an analytic equation like  $f(x,y)=0$, so that the same equation defines a (local) complex hypersurface that is transversal to all complexified leaves 
$\mathcal L_{\varepsilon_1,\varepsilon_2}=\{H= \varepsilon_1, F=\varepsilon_2\}$,  $(\varepsilon_1, \varepsilon_2) \subset \mathbb C^2$, for small enough $|\varepsilon_1|+|\varepsilon_2|$.
We are now looking for a local holomorphic map $\Psi(x,y,\lambda)=\psi_\lambda(x,y)$ which preserves $H$ and $F$ and transforms 
$\widetilde\omega_\lambda$ to $\omega_\lambda$.

We want this map to be the ``complexification'' of the family $\psi_\lambda$ defined above  (in particular,  the complex section $\mathcal N_1$ does not move under the action of $\psi_\lambda$).  We keep the same notations for all the objects, but  now we think of them from the
complex viewpoint.  In particular,  the parameter $t$  for the flows $\sigma^t$ and $\widetilde\sigma^t$ is complex and plays the role of ``complex time''.   Similarly,  $t(Q)$, $\tilde t(Q)$ and $r(Q)$ are complex functions which, by construction, are locally holomorphic.

One of the advantages of the complexified picture is that all the leaves 
$\mathcal L_{\varepsilon_1,\varepsilon_2}$ 
(both regular and singular) are now connected, each of them intersects the section $\mathcal N_1$ at exactly one point and, moreover,  every regular point $Q$ (even if it belongs to a singular leaf)  can be joint with $\mathcal N_1$ by a continuous path lying on the leaf.  Notice that the regular part of each leaf $\mathcal L_{\varepsilon_1,\varepsilon_2}$
can be understood as a complex trajectory of the complex flow $\sigma^t$ or $\widetilde\sigma^t$.  

The problem coming with ``complexification'' is that $t(Q)$ and $\tilde t(Q)$ are not uniquely defined anymore.  Indeed, the complex leaf 
$\mathcal L_{\varepsilon_1,\varepsilon_2}=\{H= \varepsilon_1, F=\varepsilon_2\}$
is now a two-dimensional surface with a non-trivial topology.  In particular, the first homology group of 
$\mathcal L_{\varepsilon_1,\varepsilon_2}$
is non-trivial and this leads to the fact that  $Q$ can be reached from $\mathcal N_1$ in many different ways,  e.g.,   $\sigma^{t_1} (Q_0) = \sigma^{t_2}(Q_0) = Q$.  So we need to make sure that the choice of $t_i$ does not affect the final result of (the complex version of) \eqref{eq:mainsiplecto} and \eqref{eq:mainsiplecto2}.  

Let us discuss this issue in more detail.  Consider one particular leaf 
$\mathcal L_{\varepsilon_1,\varepsilon_2}$ (not necessarily regular).
It intersects the section $\mathcal N_1$ at exactly one point $Q_0$.    For 
$Q\in \mathcal L_{\varepsilon_1,\varepsilon_2}$,
consider a path $\gamma(s)$ connecting this point with $Q_0$ so that $\gamma (0)= Q_0$ and $\gamma(1)= Q$.    Each point of this path can be written as $\gamma(s) = \sigma^{t (s)} (Q_0) = \widetilde{\sigma}^{\tilde t (s)} (Q_0) $  with $t(s)\in \mathbb C$, $t(s)$ continuous and $t(0)=0$.  In this way, we set $t(Q)=t(1)$ 
and
$\tilde t(Q) = \tilde t(1)$. It is easy to see that deforming $\gamma(s)$ continuously does not change 
$t(Q)$ and $\tilde t(Q)$.  Thus, this construction shows that $t(Q)$ and $\tilde t(Q)$  (and consequently $r(Q)=t(Q)-\tilde t(Q)$) are uniquely defined if we fix the homotopy type of a curve connecting $Q_0$ and $Q$.  If we choose two homotopically different curves $\gamma_1$ and $\gamma_2$, then,  in general,  $t_1(Q)\ne t_2(Q)$ and $\tilde t_1(Q) \ne \tilde t_2(Q)$.  

The condition we need is  $r_1(Q) = t_1(Q)-\tilde t_1(Q) = t_2(Q)-\tilde t_2(Q) = r_2(Q)$ or, equivalently,  $t_1(Q)-t_2(Q) = \tilde t_1(Q)- \tilde t_2(Q)$.  The latter has a very simple geometric meaning. Namely,  $Q= \sigma^{t_1(Q)} (Q_0) = \sigma^{t_2(Q)} (Q_0)$ means that 
$\sigma^{t_1(Q)-t_2(Q)} (Q)=Q$.  In other words, $t_1(Q)-t_2(Q)$  is the period of 
$\mathcal L_{\varepsilon_1,\varepsilon_2}$
as a ``complex trajectory'' of the flow $\sigma^t$, which corresponds to the (homotopy class of the) 
loop formed by the curves $\gamma_1$ and $-\gamma_2$.  
In other words, the condition that we need can be formulated as follows:  for each 
loop $\gamma$ on $\mathcal L_{\varepsilon_1,\varepsilon_2}$,
the corresponding periods of the Hamiltonian flows generated by $H_\lambda$  w.r.t. the symplectic forms  $\omega_\lambda$ and $\widetilde\omega_\lambda$ coincide.  These periods can be computed explicitly as  (compare with \eqref{eqPiLeray}):
$$
\Pi_\gamma(H,\lambda) = \oint_\gamma \frac{\omega_\lambda}{ dH_\lambda} \quad \mbox{and} \quad \widetilde \Pi_\gamma(H,\lambda) = \oint_\gamma \frac{\widetilde \omega_\lambda}{ dH_\lambda},
$$
so that the required condition takes the following form:
\begin{equation}
\label{eq:maincondition}
\Pi_\gamma(H,\lambda)  =  \widetilde \Pi_\gamma(H,\lambda) 
\end{equation}
for any closed loop $\gamma$ on $\mathcal L_{\varepsilon_1,\varepsilon_2} = \mathcal L_{H,\lambda}$  (equivalently, for any cycle of the first homology group).

Let us assume that this condition holds true  (we will below explain why, under our assumptions, this is indeed the case)  and make the next step of our construction.  As just shown,  \eqref{eq:maincondition} guarantees that  the function $r(Q)$ is well defined for any point $Q$ that can be reached by the flows 
$\sigma^t$ and $\widetilde\sigma^t$ starting from the section $\mathcal N_1$.    Since (after complexification!) every regular point satisfies this property, $r(Q)$ is defined everywhere except for singular points and is locally holomorphic by construction.  But the set of singular points, 
$$
\left\{\frac{\partial H}{\partial x}=0,\ \frac{\partial H}{\partial y}=0\right\}
=\left\{x=0,\ 3y^2+\lambda=0\right\},
$$ 
is an algebraic variety of (complex) codimension 2, and therefore by the second Riemann extension theorem (\cite[Theorem 4.4]{Gunning} or \cite[Theorem 7.2]{GrauertRemmert}), 
$r(Q)$ can be extended up to a holomorphic function defined everywhere in the considered domain.   In particular, this function is bounded and therefore,  by taking a smaller neighborhood of the parabolic point,  we may assume that the flow $\sigma^t$ is well defined for all  $t$ satisfying $|t|\le\max |{r(Q)}|$.

After this, our formula
\eqref{eq:mainsiplecto2} can 
be applied to every point from this neighborhood giving a well defined holomorphic map $\Psi$ with required properties.  It remains to return to the real world  (i.e., restrict $\psi_\lambda$ to the real part of our complex neighborhood) and we are done.

To complete the proof we need to explain why condition \eqref{eq:maincondition} is fulfilled in our case.  First we notice that the first homology group of complex leaves $\mathcal L_{\varepsilon_1,\varepsilon_2}$ is generated by 2 cycles  (topologically, $\mathcal L_{\varepsilon_1,\varepsilon_2}$ is a torus with one hole  if $(\varepsilon_1,\varepsilon_2)\not\in\Sigma^{\mathbb C}=\{\varepsilon_1^2=-\frac{4}{27}\varepsilon_2^3\}$, a 2-disk with one hole if $(\varepsilon_1,\varepsilon_2)=(0,0)$, and a pinched torus with one hole otherwise, where one of the basic cycles is pinched to a point).  Consider the (real) 
``swallow-tail domain'' $\{(\varepsilon_1,\varepsilon_2)\in\R^2\mid \varepsilon_1^2<-\frac{4}{27}\varepsilon_2^3\}\subset\{\lambda <0\}$.  Then one of these two cycles can be chosen real. Such a cycle is shown in Fig. 2 as a small 
loop, whose periods w.r.t.  $\omega_\lambda$ and $\widetilde{\omega}_\lambda$  were denoted by  $\Pi_\circ (H,\lambda)$ and $\widetilde{\Pi}_\circ (H,\lambda)$.  By our assumption $\Pi_\circ (H,\lambda)=\widetilde{\Pi}_\circ (H,\lambda)$, i.e., one of the required conditions  coincides with the second condition (ii) of Proposition  \ref{prop:5.1}.   

Now consider  condition (i)  for $\{(\varepsilon_1,\varepsilon_2)\in\R^2\mid \varepsilon_1^2<-\frac{4}{27}\varepsilon_2^3\}\subset\{\lambda <0\}$.   When approaching a hyperbolic singular leaf, the  functions $\Pi(H,\lambda)$ and $\widetilde \Pi(H,\lambda)$ both have logarithmic singularity.  This is a well known property of non-degenerate hyperbolic points (\cite{Dufour, BolsOsh}),  in other words,  they have the following asymptotics\footnote{In the domain $\{(\varepsilon_1,\varepsilon_2)\in\R^2\mid \varepsilon_1^2>-\frac{4}{27}\varepsilon_2^3\}$, similar asymptotics for $\Pi(H,\lambda)$ and $\wt\Pi(H,\lambda)$ hold, where the coefficients $\alpha,\beta,\wt\alpha,\wt\beta$ are replaced by $2\alpha,\delta,2\wt\alpha,\wt\delta$ for some real-analytic functions $\delta,\wt\delta$ in a neighbourhood of $\Sigma_{\mathrm{hyp}}$.}:
$$
\begin{aligned}
\Pi(H,\lambda) &=  \alpha(H,\lambda) \ln \left| 
3\sqrt{3} H - 2(-\lambda)^{3/2}
\right| + \beta(H,\lambda), \\
\widetilde\Pi(H,\lambda) &=  \widetilde\alpha(H,\lambda) \ln \left| 
3\sqrt{3} H - 2(-\lambda)^{3/2}
\right| + \widetilde\beta(H,\lambda)
\end{aligned}
$$
for some real-analytic functions $\alpha, \beta, \widetilde\alpha, \widetilde\beta$ in a neighborhood of $\Sigma_{\mathrm{hyp}}=\Sigma\cap\{H>0\}$. Condition (i) of  Proposition   \ref{prop:5.1}, therefore, implies that $\alpha(H,\lambda) = \widetilde\alpha(H,\lambda)$.  For  hyperbolic points, this coefficient in front of logarithm is known to be proportional to the period of the second  (invisible in the real setting) cycle on the complex leaf $\mathcal L_{H,\lambda}$  (see Proposition \ref{prop:ap2} and discussion in Appendix). 

Thus, for real $\lambda <0$ and real $H \in\left(-2(-\lambda)^{3/2}/(3\sqrt3),2(-\lambda)^{3/2}/(3\sqrt3)\right)$ the required conditions \eqref{eq:maincondition} are fulfilled.  Since the periods $\Pi_\gamma$ and $\widetilde\Pi_\gamma$ are locally holomorphic (we cannot consider them as single-valued functions because of the monodromy phenomenon) and coincide on an open real domain, we conclude that \eqref{eq:maincondition} is fulfilled identically, which completes the proof of Proposition \ref{prop:5.1}.
\end{proof}

We now return to our discussion on symplectic invariants of parabolic trajectories that we started in Section \ref{sec:3}.   According to Proposition 
\ref{pro:coord},  this problem can be reduced to the situation explained in Remark \ref{rem:about2forms}. 

Namely, we consider two functions $H=x^2+y^3+\lambda y$ and $F=y$ that commute simultaneously with respect to two symplectic forms $\Omega$ and $\wt\Omega$ defined by \eqref{eq:omega} and \eqref{eq:tildeomega}  with $\omega_\lambda = f(x,y,\lambda) dx\wedge dy$ and  $\tilde\omega_\lambda = \tilde f(x,y,\lambda) dx\wedge dy$ and $f, \tilde f >0$. Combining Proposition \ref{prop:3.3} and Proposition \ref{prop:5.1}, we obtain the following 

\begin{proposition}\label{prop:5.2}
The following two statements are equivalent.
\begin{enumerate}
\item In a tubular neighborhood of the parabolic orbit $\gamma_0(t)=(0,0,0, \varphi{=}t)$ there is a (real-analytic) diffeomorphism $\Phi$  such that

\begin{itemize}
\item[(i)] $\Phi$ preserves $H$ and $F$;
\item[(ii)] $\Phi^* (\widetilde\Omega) = \Omega$.
\end{itemize}

\item 
The functions $\Pi,\Pi_\circ,\wt\Pi,\wt\Pi_\circ$ (real-analytic in the complement of the bifurcation diagram) satisfy the relations
\begin{itemize}
\item
$\Pi(H,\lambda)- \widetilde{\Pi}(H,\lambda)$  is real-analytic    (in a neighborhood of the point  $H=0, \lambda=0$), 

\item $\Pi_\circ (H,\lambda)= \widetilde{\Pi}_\circ (H,\lambda)$.
\hfill $\square$
\end{itemize}
\end{enumerate}
\end{proposition}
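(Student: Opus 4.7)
The plan is to derive Proposition \ref{prop:5.2} as a direct composition of the two earlier reduction results, Proposition \ref{prop:3.3} and Proposition \ref{prop:5.1}. First I would invoke Proposition \ref{prop:3.3} to replace the existence of a fiberwise real-analytic diffeomorphism $\Phi$ satisfying (i) and (ii) by the equivalent existence of a real-analytic family of local diffeomorphisms $\psi_\lambda(x,y)$ of a neighborhood of the origin in $\R^2(x,y)$, fixing the origin at $\lambda=0$, preserving $H(x,y,\lambda)=x^2+y^3+\lambda y$ for each $\lambda$, and satisfying $\psi_\lambda^*\widetilde\omega_\lambda = \omega_\lambda$. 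This step turns the four-dimensional symplectic problem into a parametric one-degree-of-freedom problem and uses only the special form \eqref{eq:omega}--\eqref{eq:tildeomega} of $\Omega$ and $\widetilde\Omega$, together with the fact that the $S^1$-actions generated by $F$ w.r.t.\ the two forms coincide.

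Next I would apply Proposition \ref{prop:5.1} to the family $\psi_\lambda$ so produced. That proposition states precisely the equivalence between the existence of such a real-analytic family $\psi_\lambda$ and the two conditions on the passage-time functions: $\Pi(H,\lambda)-\widetilde\Pi(H,\lambda)$ extends to a real-analytic function near $(H,\lambda)=(0,0)$, and $\Pi_\circ(H,\lambda)=\widetilde\Pi_\circ(H,\lambda)$. Concatenating the two equivalences gives Proposition \ref{prop:5.2} verbatim, since the hypotheses on $\omega_\lambda,\widetilde\omega_\lambda$ (real-analytic, positively-oriented, $f,\widetilde f>0$) are exactly those needed for Proposition \ref{prop:5.1} to apply.

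The only subtlety — which I expect to mention but not to genuinely overcome, as both ingredients have already been proved — is checking that the passage-time conditions of Proposition \ref{prop:5.1} are equivalent to those stated in Proposition \ref{prop:5.2}. Since the latter simply re-uses the notation $\Pi,\widetilde\Pi,\Pi_\circ,\widetilde\Pi_\circ$ introduced in the proof of Proposition \ref{prop:5.1} and in Figure \ref{fig:2}, this is a matter of unwinding definitions rather than an actual step. The main conceptual content is carried entirely by the complexification/Riemann-extension argument inside the proof of Proposition \ref{prop:5.1}, which guarantees that the family $\psi_\lambda$ is jointly real-analytic in $(x,y,\lambda)$ — exactly the regularity required by the reduction of Proposition \ref{prop:3.3}. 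Consequently, the proof of Proposition \ref{prop:5.2} reduces to a one-line invocation: combine Proposition \ref{prop:3.3} with Proposition \ref{prop:5.1}.
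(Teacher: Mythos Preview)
Your proposal is correct and matches the paper's own approach exactly: the paper introduces Proposition \ref{prop:5.2} with the sentence ``Combining Proposition \ref{prop:3.3} and Proposition \ref{prop:5.1}, we obtain the following'' and then states the proposition with no further argument. Your elaboration of the composition and the remark that the passage-time conditions are identical to those in Proposition \ref{prop:5.1} are precisely what is implicit in that one-line reduction.
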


In fact, the functions $\Pi(H,\lambda)$ and $\Pi_\circ (H,\lambda)$ are not independent.  Indeed, as  
$H\to  2(-\lambda)^{3/2}/(3\sqrt3)$ 
(i.e., when the real disconnected regular fiber approaches the hyperbolic singular one) these two functions  have a logarithmic singularity with the same logarithmic coefficient, that is, we have the following asymptotics:
$$
\begin{aligned}
\Pi(H,\lambda) &=  \alpha(H,\lambda) \ln \left| 
3\sqrt{3} H - 2(-\lambda)^{3/2}
\right| + \beta(H,\lambda), \\
\Pi_\circ(H,\lambda) &=  \alpha(H,\lambda) \ln \left| 
3\sqrt{3} H - 2(-\lambda)^{3/2}
\right| + \beta_\circ(H,\lambda).
\end{aligned}
$$  
In other words, the functions $\beta(H,\lambda)$ and $\beta_\circ(H,\lambda)$ are different and not related to each other in any sense,  but 
the coefficients $\alpha(H,\lambda)$ are the same for the both functions.   According to Proposition \ref{prop:5.2}, however,   the regular part  $\beta(H,\lambda)$ of $\Pi(H,\lambda)$ does not play any role, so that the only important information for us is the coefficient $ \alpha(H,\lambda)$ which, as we have just explained,  can be ``obtained'' from $\Pi_\circ(H,\lambda)$.  Hence we conclude that   
$\Pi_\circ(H,\lambda)$ contains all the information we need for symplectic characterisation of a parabolic trajectory.  

We also note  that the period $\Pi_\circ(H,\lambda)$ of closed trajectories can naturally be interpreted in terms of the action variables of our integrable system. Indeed, the family of {\it small closed trajectories} shown on Fig.~2 corresponds to a family of 
``narrow''
two-dimensional Liouville tori  (recall that a four-dimensional neighborhood $U(\gamma_0)$ of the parabolic orbit $\gamma_0$ is the product (Fig. 2)$\times S^1$). For this family, we can naturally define two action variables $I_1$ and $I_2$. The first of them corresponds to 
the free Hamiltonian $S^1$-action on $U(\gamma_0)$ generated by $F=\lambda$, that is, $I_1 = \lambda$. The other $I_2(H,\lambda)$  corresponds to the family of vanishing cycles shown in Fig.~2  as {\it small closed trajectories}. 
We re-denote this function as $I_2(H,\lambda)=I_\circ (H,\lambda)$.
Without loss of generality  we will assume  that
\begin{equation} \label {eq:action:narrow}
I_\circ>0 \quad \mbox{and} \quad I_\circ\to0 \quad \mbox{as} \quad (H,\lambda)\to (H(\gamma_0),F(\gamma_0)),
\end{equation}
i.e., as we approach the singular fiber.  Notice that,  in a coordinate system $(x,y,\lambda,\varphi)$, $I_\circ (H,\lambda)$ can be defined  by an explicit formula.  
Fixing $H$ and $\lambda$, we define a unique closed cycle (see Fig.~2). This cycle bounds a certain domain $V_{H,\lambda}\subset \R^2 (x,y)$ on the corresponding layer $\{ \lambda = \mathrm{const}\}$.  Then
$$
I_\circ (H,\lambda) = \frac{1}{2\pi} \mathsf{area}_\circ \bigl(V_{H,\lambda}\bigr) =   \frac{1}{2\pi}\int_{V_{H,\lambda}}  \omega_\lambda.
$$

It is well-known that $I_\circ (H,\lambda)$ and $\Pi_\circ(H,\lambda)>0$ are related in the following very simple way:
$$
\Pi_\circ(H,\lambda) = 2\pi  \frac{\partial}{\partial H}I_\circ (H,\lambda),
$$
which shows that $\Pi_\circ(H,\lambda)$ can be reconstructed from $I_\circ (H,\lambda)$, so that we finally come to the following equivalent  version of 
Proposition \ref{prop:5.2}.

\begin{proposition} \label{prop:5.3} 
In the same assumptions as in Proposition \ref{prop:5.2}, the following two statements are equivalent.
\begin{enumerate}
\item[(i)] In a tubular neighborhood of the parabolic orbit $\gamma_0$ there is a (real-analytic) diffeomorphism $\Phi$  such that
\begin{itemize}
\item $\Phi$ preserves $H$ and $F$;
\item $\Phi^* (\widetilde \Omega) = \Omega$.
\end{itemize}
\item[(ii)]  The actions (real-analytic on the 
``swallow-tail domain'' corresponding to a family of ``narrow'' Liouville tori)
corresponding to the family of vanishing cycles 
(cf. \eqref{eq:action:narrow})
coincide, $I_\circ(H,F)= \widetilde I_\circ (H,F)$.   
\hfill $\square$
\end{enumerate}
\end{proposition}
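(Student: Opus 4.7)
\medskip\noindent\emph{Proof plan.}
The plan is to view Proposition \ref{prop:5.3} as a repackaging of Proposition \ref{prop:5.2}: the two conditions on $\Pi$ and $\Pi_\circ$ appearing there should collapse into the single condition $I_\circ=\widetilde I_\circ$. By Proposition \ref{prop:5.2}, it therefore suffices to prove that the conjunction of
\begin{itemize}
\item[(a)] $\Pi(H,\lambda)-\widetilde\Pi(H,\lambda)$ extends real-analytically to a neighbourhood of $(0,0)$,
\item[(b)] $\Pi_\circ(H,\lambda)=\widetilde\Pi_\circ(H,\lambda)$ on the swallow-tail domain,
\end{itemize}
is equivalent to $I_\circ=\widetilde I_\circ$. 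The two ingredients I will rely on are the differentiation formula $\Pi_\circ = 2\pi\,\partial I_\circ/\partial H$ recorded just before the statement, and the observation (already used in the proof of Proposition \ref{prop:5.1}) that $\Pi$ and $\Pi_\circ$ share the same logarithmic coefficient $\alpha(H,\lambda)$ in their asymptotic expansions at $\Sigma_{\mathrm{hyp}}$.

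For the direction (i) $\Rightarrow$ (ii), I would apply (b) to obtain $\partial_H(I_\circ-\widetilde I_\circ) = 0$ on the swallow-tail domain, so $I_\circ-\widetilde I_\circ$ is a function of $\lambda$ alone. The normalisation \eqref{eq:action:narrow} forces both $I_\circ$ and $\widetilde I_\circ$ to vanish along the elliptic branch $\Sigma_{\mathrm{ell}}$, where the vanishing cycle collapses to a point. Since $\Sigma_{\mathrm{ell}}$ meets each $\lambda$-slice of the swallow-tail domain in exactly one point, the $\lambda$-dependent function $I_\circ-\widetilde I_\circ$ vanishes on a full curve parametrised by $\lambda$, and hence identically.

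For (ii) $\Rightarrow$ (i), differentiating $I_\circ=\widetilde I_\circ$ in $H$ immediately yields (b). To obtain (a), I would compare the logarithmic asymptotics at $\Sigma_{\mathrm{hyp}}$ recorded just before the statement,
$$
\Pi = \alpha\ln\bigl|3\sqrt3\,H - 2(-\lambda)^{3/2}\bigr| + \beta, \qquad
\Pi_\circ = \alpha\ln\bigl|3\sqrt3\,H - 2(-\lambda)^{3/2}\bigr| + \beta_\circ,
$$
with the same coefficient $\alpha$ in both. Since (b) forces $\alpha=\widetilde\alpha$, the logarithmic singularities cancel in $\Pi-\widetilde\Pi$, which is therefore real-analytic in a neighbourhood of $\Sigma_{\mathrm{hyp}}$. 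Away from $\Sigma_{\mathrm{hyp}}$, both $\Pi$ and $\widetilde\Pi$ are already real-analytic near $(0,0)$ (the elliptic and parabolic strata do not obstruct the analyticity of passage times for open trajectories), so by analytic continuation $\Pi-\widetilde\Pi$ extends real-analytically to a full neighbourhood of $(0,0)$, giving (a).

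The main technical point will be handling the cusp at $(0,0)$, where $\Sigma_{\mathrm{ell}}$ and $\Sigma_{\mathrm{hyp}}$ meet. In the forward direction, one must check that the elliptic boundary value $I_\circ|_{\Sigma_{\mathrm{ell}}}=0$ is attained uniformly down to the cusp so the integration in $H$ is legitimate there; in the reverse direction, one must ensure the analytic extension of $\Pi-\widetilde\Pi$ across $\Sigma_{\mathrm{hyp}}$ passes through the cusp itself. Both points can be settled by invoking the explicit real-analytic asymptotics exhibited in the proof of Proposition \ref{prop:5.1}, together with a Riemann-type extension argument across a codimension-one analytic variety.
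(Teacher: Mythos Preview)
Your overall plan matches the paper's: reduce to Proposition~\ref{prop:5.2}, use $\Pi_\circ = 2\pi\,\partial_H I_\circ$ together with the normalisation of $I_\circ$ on $\Sigma_{\mathrm{ell}}$, and exploit the shared logarithmic coefficient $\alpha$. Your argument for (i)$\Rightarrow$(ii) and the step (ii)$\Rightarrow$(b) are correct.

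There is, however, a genuine gap in your argument for (ii)$\Rightarrow$(a). You assert that ``away from $\Sigma_{\mathrm{hyp}}$, both $\Pi$ and $\widetilde\Pi$ are already real-analytic near $(0,0)$'' and that ``the parabolic strata do not obstruct the analyticity of passage times for open trajectories''. This is false: at $\lambda=0$ the open trajectory passes through the parabolic point when $H=0$, and Lemma~\ref{periodsExp} shows explicitly that $\Pi(H,0)$ blows up like $H^{-1/6}$. So the individual passage times are \emph{not} analytic at the cusp. Your proposed Riemann-type extension does not help, because you have not established that the \emph{difference} $\Pi-\widetilde\Pi$ is bounded (let alone holomorphic on a punctured neighbourhood) at $(0,0)$; the $H^{\pm 1/6}$ singularities of $\Pi$ and $\widetilde\Pi$ would have to be shown to cancel, and nothing you have said forces that.

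The paper sidesteps this by not proving (a) directly. It observes that in the proof of Proposition~\ref{prop:5.1} the \emph{only} use of condition (a) is to extract $\alpha=\widetilde\alpha$, i.e.\ equality of the period over the second (complex) basis cycle. But this equality already follows from (b) alone, since $\Pi_\circ$ has the same logarithmic expansion $\alpha\ln|\,\cdot\,|+\beta_\circ$ near $\Sigma_{\mathrm{hyp}}$ with the \emph{same} coefficient $\alpha$. Thus (b) by itself yields equality of both complex periods, hence condition~\eqref{eq:maincondition}, hence the existence of $\psi_\lambda$ and of $\Phi$. In short, the correct route is (ii)$\Rightarrow$(b)$\Rightarrow$(i) directly through the internals of the proof of Proposition~\ref{prop:5.1}, bypassing condition (a) altogether.
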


We now want to give one more version of the criterion for the existence of $\Phi$ by omitting the condition $F=\lambda$ which, in particular, means that $F$ is a $2\pi$-periodic integral (equivalently, the action variable $I_1$) simultaneously for the both integrable systems.

Consider  $H$ and $F$ commuting with respect to $\Omega$ and $\widetilde\Omega$ in a tubular neighborhood of a parabolic orbit $\gamma_0$.  Notice that now we are not allowed to assume that these two integrable systems share the same canonical coordinate system $(x,y,\lambda,\varphi)$  as we did in Propositions \ref{prop:5.2} and \ref{prop:5.3}.

Let $dF|_{\gamma_0} \ne 0$. We will say that $\Omega$ and $\wt\Omega$ {\em induce}
\begin{itemize}
\item {\em the same orientation} of $\gamma_0$ if the Hamiltonian flows of $F$ w.r.t. $\Omega$ and $\wt\Omega$ induce the same orientation of $\gamma_0$;
\item {\em the same coorientation} of $\gamma_0$ if  (the restrictions of) 
$\Omega$ and $\wt\Omega$ induce the same orientation of a (local) 2-dimensional surface in $\{F=F(\gamma_0)\}$ transversal to $\gamma_0$ (i.e., on a 2-dim Poincar\'e section).
\end{itemize}

Without loss of generality, we may (and will) assume that $\Omega$ and $\wt\Omega$ induce the same orientation and the same coorientation of $\gamma_0$. Indeed, we can easily achieve this condition  by using additional maps  $(x,y,\lambda,\varphi) \mapsto  (x,y,\lambda,-\varphi)$ and  $(x,y,\lambda,\varphi) \mapsto  (-x,y,\lambda,\varphi)$ (written in a canonical coordinate system from Proposition \ref{pro:coord}) that change respectively the orientation and coorientation without changing the functions $F$ and $H$.

As above we can define two natural action variables for each of these two integrable systems  $I(H,F)$, $I_\circ(H,F)$ and $\widetilde I(H,F)$, $\widetilde I_\circ(H,F)$. Here $I(H,F)$ and $\widetilde I(H,F)$ are smooth on a certain neighborhood $U(\gamma_0)$ and  are generators of  the Hamiltonian $S^1$-actions w.r.t. $\Omega$ and $\widetilde\Omega$ respectively.  

Alternatively, we may define $I(H,F)$  by 
$$
I(H,F) = \frac{1}{2\pi}\oint_{\gamma} \varkappa, \quad \mbox{where } d\varkappa = \Omega
$$ 
and $\gamma=\gamma_{H,F}$ is a closed cycle on the fiber $\mathcal L_{H,F}$ that is homotopic to $\gamma_0$  (recall that locally our fibration  can be understood as the direct product  of $S^1$ and a three-dimensional foliated domain $\mathcal V$ shown in Figure 2,   then  $\gamma_{H,F}$ can be taken of the form $S^1 \times \{P\}$ where $P\in \mathcal V$ is a point lying on the corresponding fiber).

The other action variable $I_\circ(H,F)$ is only defined on the family of ``narrow'' Liouville tori  corresponding to {\it small oriented loops}  $\mu_\circ = \mu_\circ (H, F)$ shown in Fig.~2:
$$
I_\circ (H,F)= \frac{1}{2\pi}\oint_{\mu_\circ}  \varkappa , \quad  \mbox{where }
d\varkappa = \Omega.
$$
In other words, $I_\circ(H,F)$ is a function defined on the ``swallow-tail'' domain on $\R^2(H,F)$ bounded by the bifurcation diagram $\Sigma$ (this definition coincides with \eqref{eq:action:narrow} up to, perhaps, changing the sign).

The actions $\widetilde I(H,F)$ and $\widetilde I_\circ(H,F)$ for the second system are defined in a similar way by integrating $\wt\varkappa$,  $d\wt\varkappa = \wt \Omega$,   over the same cycles $\gamma$ and $\mu_\circ$ with the same orientations.

\begin{thm} \label{thm:5.4}
Suppose that the singular fibration defined by the functions $H$ and $F$ is Lagrangian w.r.t. both the symplectic forms $\Omega$ and $\wt\Omega$. Suppose that $\Omega$ and $\wt\Omega$ induce the same orientation and the same coorientation of a parabolic orbit $\gamma_0$.
Then the following two statements are equivalent.

\begin{enumerate}

\item[(i)] In a tubular neighborhood of the parabolic orbit $\gamma_0$ there is a (real-analytic) diffeomorphism $\Phi$  such that

\begin{itemize}
\item $\Phi$ preserves $H$ and $F$;
\item $\Phi^* (\widetilde \Omega) = \Omega$.
\end{itemize}

\item[(ii)]  These two integrable systems have common action variables described above, i.e.,    
$$
I (H,F)= \widetilde I (H,F) + \const  \quad \mbox{and} \quad   
I_\circ(H,F)= \widetilde I_\circ (H,F). 
$$  
\end{enumerate}
\end{thm}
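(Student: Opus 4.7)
My plan is to reduce Theorem \ref{thm:5.4} to Proposition \ref{prop:5.3}, which already handles the case where both symplectic forms are written in one and the same canonical chart $(x,y,\lambda,\varphi)$ with $F=\lambda$ and $H=x^2+y^3+\lambda y$. The bulk of the work for (ii) $\Rightarrow$ (i) is constructing such a simultaneous chart out of the hypotheses; once this is done, the equality $I_\circ=\wt I_\circ$ will supply the hypothesis of Proposition \ref{prop:5.3} directly and produce $\Phi$.

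I would dispatch (i) $\Rightarrow$ (ii) first, as it is essentially homological bookkeeping. Since $\Phi$ preserves $H$ and $F$, it preserves every fiber $\mathcal L_{h,f}$; in particular it preserves the parabolic orbit $\gamma_0$ (the unique $1$-dimensional $\R^2$-orbit in a tubular neighborhood). The orientation hypothesis forces $\Phi|_{\gamma_0}$ to be orientation-preserving, and the coorientation hypothesis similarly fixes the orientation of the vanishing cycle $\mu_\circ$. Hence $\Phi$ preserves, up to homology with orientation, both the orbit-type cycle $\gamma$ and $\mu_\circ$, so pulling back a primitive of $\wt\Omega$ produces a primitive of $\Omega$ whose periods over $\gamma$ and $\mu_\circ$ reproduce $\wt I$ and $\wt I_\circ$; the additive constant in $I=\wt I+\const$ merely reflects the $H^1(U(\gamma_0))\cong\R$ ambiguity in the choice of primitive.

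For (ii) $\Rightarrow$ (i) I would proceed in three reductions. \emph{Step 1.} Since $I(H,F)=\wt I(H,F)+\const$, the single function $\hat F:=I(H,F)-I(\gamma_0)$ generates $2\pi$-periodic Hamiltonian flows with respect to \emph{both} $\Omega$ and $\wt\Omega$; replacing $F$ by $\hat F$ is a pure reparametrization of the base of the Lagrangian fibration and does not touch either symplectic form, so afterwards $F$ is a common free $S^1$-generator with bifurcation diagram in $\{F\le 0\}$ (signs fixed by the orientation assumption). \emph{Step 2.} Apply Proposition \ref{prop:2.1}/Corollary \ref{cor:2.1} to make a further base reparametrization $\wt H=\wt H(H,F)$ which puts the bifurcation diagram in the canonical cusp form \eqref{eq:canonbd1}; again this is invisible to the symplectic forms. \emph{Step 3.} Both systems now satisfy the hypotheses of Proposition \ref{pro:coord2}, so each admits a canonical chart; identifying the two charts via Remark \ref{rem:about2forms} (this is where the coorientation hypothesis is spent, to match orientations of the $(x,y)$-plane) brings us exactly to the setup \eqref{eq:omega}--\eqref{eq:tildeomega} of Proposition \ref{prop:5.3}, which then produces the desired $\Phi$.

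The central obstacle, as I see it, is verifying that the equality $I_\circ=\wt I_\circ$ survives the reduction above. The clean way around it is to observe that $I_\circ$ is by definition $\tfrac{1}{2\pi}$ times the integral of the symplectic form over the disk bounded by the vanishing cycle---a quantity intrinsic to the Lagrangian fibration and the symplectic form, independent both of how the base $\R^2(H,F)$ is coordinatized and of any fiberwise diffeomorphism that preserves the fibration and the orientation data. Thus $I_\circ$ is preserved under each of the base reparametrizations in Steps 1--2 and under the fiberwise identification of Step 3, so the hypothesis $I_\circ=\wt I_\circ$ from (ii) transports verbatim to the canonical setting and unlocks Proposition \ref{prop:5.3}.
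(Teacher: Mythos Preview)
Your proposal is correct and follows essentially the same route as the paper: for (ii)$\Rightarrow$(i) you replace $F$ by the common action $\hat F=\pm I(H,F)+\const$, normalise $\hat H$ so the bifurcation diagram is standard, invoke Proposition~\ref{pro:coord2} to get canonical charts for each form, identify the charts via a fiberwise diffeomorphism, and apply Proposition~\ref{prop:5.3}; for (i)$\Rightarrow$(ii) you use the orientation/coorientation hypotheses to argue $\Phi$ preserves the oriented cycles $\gamma,\mu_\circ$ and hence the actions. Your treatment of the survival of $I_\circ=\wt I_\circ$ under the chart identification is in fact more explicit than the paper's, which simply asserts that the hypotheses of Theorem~\ref{thm:5.4} continue to hold after pulling back by the identifying map $\Psi$.
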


\begin{proof}

Suppose (ii) holds true. First of all we replace the functions $F$ and $H$ by new functions $\hat F$  and $\hat H$  satisfying the following conditions  (cf. Proposition  \ref{prop:2.1} and Remark \ref{rem:new1}):
\begin{itemize}

\item $\hat F = \pm I(H, F) + \mathrm{const}$  where $\pm$ and $\mathrm{const}$ are chosen in such a way that  $\hat F=0$ on the parabolic trajectory $\gamma_0$ and $\hat F<0$ on the swallow-tail domain of the bifurcation diagram;

\item $\hat H$ is chosen in such a way that the bifurcation diagram of  $\hat{\mathcal F}=(\hat F, \hat H)$ takes the standard form \eqref{eq:canonbd1}.    
\end{itemize}

After this we apply  Proposition \ref{pro:coord2} which says that formulas from Proposition \ref{pro:coord} holds true exactly for the functions $\hat F$ and $\hat H$.   In other words, we can introduce {\it two different} ``good''  coordinate systems $(x,y,\lambda, \varphi)$ and $(\tilde x, \tilde y, \tilde \lambda, \tilde \varphi)$  as in  Proposition \ref{pro:coord} for $(\hat H, \hat F, \Omega)$ and  $(\hat H, \hat F, \wt \Omega)$ respectively  (notice that  $\lambda = \tilde\lambda$ automatically  as both $\lambda$ and $\tilde \lambda$ coincide with $\hat F$).   

The next step is to consider the map $\Psi: (x,y,\lambda, \varphi) \mapsto (\tilde x, \tilde y, \tilde \lambda, \tilde \varphi)$ and after this continue working with the forms $\Omega$ and $\Psi^*(\widetilde\Omega)$.  Now $(x,y,\lambda, \varphi)$ is a common ``good'' coordinate system for the both systems and the conditions of Theorem \ref{thm:5.4} are still fulfilled for 
$\Omega$ and $\Psi^*(\widetilde\Omega)$.   
After this,  it remains to apply Proposition \ref{prop:5.3}  for the integrable systems $(\hat H, \hat F, \Omega)$ and $(\hat H, \hat F, \Psi^*(\widetilde\Omega))$.

The fact that (i) implies (ii) follows from the assumption that the symplectic forms $\Omega$ and $\wt \Omega$ induce the same  orientation and co-orientation on $\gamma_0$. Indeed this  implies that $\Phi$ preserves the homology class of $\gamma$ and $\mu_\circ$ on each ``narrow'' torus. Therefore  if we set $\varkappa = \Phi^*\wt\varkappa$ in the definition of the actions $I(H,F)$ and $I_\circ(H,F)$, then $I (H,F)= \widetilde I (H,F)$ and $I_\circ(H,F)= \widetilde I_\circ (H,F)$.
\end{proof}

In fact, we do not even need to mention $H$ and $F$ in the statement of Theorem \ref{thm:5.4} at all.  We may simply say:

\begin{thm} \label{thm:5.5}
Consider a singular fibration with a parabolic orbit $\gamma_0$ which is 
Lagrangian with respect to two symplectic structures $\Omega$ and $\widetilde\Omega$.
Suppose that $\Omega$ and $\wt\Omega$ induce the same orientation and the same coorientation of $\gamma_0$.
The necessary and sufficient condition
for the existence of a (real-analytic) diffeomorphism $\Phi$ in a 
tubular
neighborhood of $\gamma_0$ sending each fiber to itself
and such that $\Phi^* (\widetilde \Omega) = \Omega$ is that these two systems have common action variables in the sense that
for every closed cycle $\tau$ on any ``narrow'' torus   
we have  
$$
\oint_{\tau}  \varkappa =  \oint_{\tau}  \widetilde\varkappa, \ \ \mbox{for} \  
d\varkappa = \Omega,  \ d\widetilde\varkappa = \widetilde\Omega,
$$
where $\varkappa$ and $\widetilde\varkappa$ are chosen in such a way that $\oint_{\gamma_0}   \varkappa = \oint_{\gamma_0}  \widetilde\varkappa =0.
$\hfill $\square$
\end{thm}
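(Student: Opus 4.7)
The plan is to deduce Theorem~\ref{thm:5.5} from Theorem~\ref{thm:5.4} by choosing any real-analytic commuting functions $H, F$ that generate the given Lagrangian fibration in a neighborhood of $\gamma_0$, and then translating the period-integral condition into the action-variable condition of Theorem~\ref{thm:5.4}.

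First, I would pick any pair of real-analytic commuting functions $H, F$ defining the fibration near $\gamma_0$ (such a pair exists from the local $\mathbb R^2$-action on a Lagrangian fibration). Since the fibers are Lagrangian for both $\Omega$ and $\widetilde\Omega$, these functions Poisson-commute with respect to both symplectic forms. The fact that $\gamma_0$ is parabolic is an intrinsic property of the momentum map germ (Remark~\ref{rem:new1}), and the hypotheses about orientation and coorientation carry over verbatim to the setting of Theorem~\ref{thm:5.4}.

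Next, I would show that the period-integral condition is equivalent to the action-variable condition of Theorem~\ref{thm:5.4}. The first homology of any narrow Liouville torus in the swallow-tail region is freely generated by two cycles: a cycle $\gamma$ homotopic to $\gamma_0$, and the vanishing cycle $\mu_\circ$ shown in Fig.~\ref{fig:2}. By linearity in homology, the blanket condition $\oint_\tau \varkappa = \oint_\tau \widetilde\varkappa$ for every closed cycle $\tau$ is equivalent to the two scalar equalities
$$
\oint_\gamma \varkappa = \oint_\gamma \widetilde\varkappa, \qquad \oint_{\mu_\circ} \varkappa = \oint_{\mu_\circ} \widetilde\varkappa,
$$
i.e., $I(H,F) = \widetilde I(H,F)$ and $I_\circ(H,F) = \widetilde I_\circ(H,F)$ on the swallow-tail domain. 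The normalization $\oint_{\gamma_0}\varkappa = \oint_{\gamma_0}\widetilde\varkappa = 0$ pins down the additive constant in the first equality of Theorem~\ref{thm:5.4}: as the narrow torus degenerates onto the cuspidal fiber, $\gamma$ deforms continuously to $\gamma_0$, so both $I$ and $\widetilde I$ extend continuously to zero on the parabolic orbit. Theorem~\ref{thm:5.4} applied to $(H,F,\Omega,\widetilde\Omega)$ then produces the desired fiber-preserving diffeomorphism $\Phi$ with $\Phi^*\widetilde\Omega = \Omega$. The converse is straightforward: if $\Phi$ exists, the shared orientation and coorientation of $\gamma_0$ guarantee that $\Phi_*[\gamma]=[\gamma]$ and $\Phi_*[\mu_\circ]=[\mu_\circ]$ on each narrow torus, and taking $\varkappa = \Phi^*\widetilde\varkappa$ produces equality of all period integrals.

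The main obstacle I anticipate is verifying that the period integrals over $\gamma$ and $\mu_\circ$ are unambiguously determined by the single normalization $\oint_{\gamma_0}\varkappa = 0$, independently of the choice of primitive $\varkappa$. This follows because the tubular neighborhood $U(\gamma_0)$ deformation-retracts onto $\gamma_0 \cong S^1$, so $H^1(U(\gamma_0); \mathbb R) \cong \mathbb R$ is generated by the class dual to $[\gamma_0]$; the period of $\varkappa$ over $\gamma_0$ therefore fixes $\varkappa$ up to closed forms with zero $\gamma_0$-period, and Stokes' theorem expresses $\oint_\gamma\varkappa$ in terms of $\int_S \Omega$ for any 2-chain $S$ with $\partial S = \gamma-\gamma_0$, while $\oint_{\mu_\circ}\varkappa = \int_{D_{\mu_\circ}}\Omega$ since $\mu_\circ$ bounds a disk inside $U(\gamma_0)$. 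Both quantities therefore become intrinsic symplectic invariants of $\Omega$, justifying the claimed equivalence of invariants.
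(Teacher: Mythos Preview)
Your proposal is correct and matches the paper's intended approach: the paper presents Theorem~\ref{thm:5.5} as an immediate coordinate-free reformulation of Theorem~\ref{thm:5.4} and gives no separate proof (only a $\square$). You have accurately supplied the translation --- identifying the generators $\gamma$ and $\mu_\circ$ of $H_1$ of a narrow torus, matching the normalization $\oint_{\gamma_0}\varkappa=0$ to the additive constant in Theorem~\ref{thm:5.4}(ii), and checking well-definedness of the period integrals via $H^1(U(\gamma_0))\cong\R$ --- which is exactly what the paper leaves implicit.
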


Notice that due to analyticity it is sufficient to compare  the actions only on the family of ``narrow'' tori,  although $I$ and $\tilde I$ are defined on the whole neighborhood $U(\gamma_0)$.

\medskip

Finally, we want to relax the condition that each fiber goes to itself  (indeed, this assumption makes no sense at all if we want to compare parabolic orbits for two different integrable systems).

Assume that we are given two integrable systems with parabolic orbits $\gamma_0$ and $\wt\gamma_0$, respectively. For the both systems we consider the bifurcation diagrams (or bifurcation complexes), $\Sigma$ and $\wt\Sigma$ respectively, and the 
``swallow-tail domains'' corresponding to the families of ``narrow'' Liouville tori.  On each of these domains
we have two actions $I$ and $I_\circ$  (as functions of $H$ and $F$) and correspondingly $\wt I$ and $\wt{I}_\circ$  (as functions of $\wt H$ and $\wt F$) defined as above.  Without loss of generality we will assume that these action variables are ``normalised'' in such a way that 
\begin{itemize}
\item all of them vanish at  the corresponding cusp point, 
\item $I_\circ$ and $\wt{I}_\circ$ are positive on the corresponding ``swallow-tail'' domains,
\item $I$ and $\tilde I$ are negative on the corresponding ``swallow-tail'' domains.
\end{itemize}
Combining Theorem \ref {thm:5.4} with Proposition \ref {prop:lift}  we obtain

\begin{thm} \label{thm:5.6}
The necessary and sufficient condition for the existence of a real-analytic fiberwise symplectomorphism $\Phi: U(\gamma_0) \to \widetilde U (\widetilde{\gamma}_0)$ between some tubular neighborhoods $U(\gamma_0),\widetilde U (\widetilde{\gamma}_0)$ of the parabolic orbits $\gamma_0,\wt\gamma_0$ is that these two systems have common action variables in the sense that 
there is a real-analytic diffeomorphism
\begin{equation}
\label{eq:mapHF}
\phi: (H, F) \mapsto (\widetilde H, \widetilde F) 
\end{equation}
between some neighborhoods of the cusp points $(H(\gamma_0),F(\gamma_0))$ and $(\wt H(\wt\gamma_0),\wt F(\wt\gamma_0))$ in $\R^2$
that 
\begin{itemize}
\item respects the bifurcation diagrams together with their partitions into hyperbolic and elliptic branch\footnote{Equivalently, we may say that $\phi$  defines a (local) homeomorphism between the corresponding bifurcation complexes.}:
$$
\phi(\Sigma)=\wt\Sigma, \quad \mbox{moreover} \quad
\phi(\Sigma_{\mathrm{ell}})=\wt\Sigma_{\mathrm{ell}} \quad \mbox{and} \quad
\phi(\Sigma_{\mathrm{hyp}})=\wt\Sigma_{\mathrm{hyp}},
$$
\item and preserves the action variables described above: $I=\wt I\circ\phi$ and $I_\circ=\wt I_\circ\circ\phi$, i.e., for the action variables defined on the ``swallow-tail domains'' we have 
$$
I(H,F)=\widetilde I (\widetilde H(H,F), \widetilde F(H,F)) \quad \mbox{and} \quad I_\circ(H,F)=\widetilde I_\circ (\widetilde H(H,F), \widetilde F(H,F)).
\eqno \square
$$
\end{itemize}
\end{thm}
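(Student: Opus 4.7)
The plan is to derive Theorem~\ref{thm:5.6} by stacking two results already available: Proposition~\ref{prop:lift}, which lifts any analytic diffeomorphism of the bases respecting the bifurcation diagram to a fiberwise diffeomorphism upstairs, and Theorem~\ref{thm:5.4}, which characterises fiber-preserving symplectic equivalence by equality of the action variables. For the necessity direction, a fiberwise symplectomorphism $\Phi$ automatically descends to an analytic diffeomorphism $\phi:\R^2(H,F)\to\R^2(\wt H,\wt F)$, and since critical values go to critical values and the topology of the singular fibers distinguishes elliptic from hyperbolic branches, $\phi$ preserves the decomposition $\Sigma=\Sigma_{\mathrm{ell}}\cup\Sigma_{\mathrm{hyp}}$. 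Taking $\Phi^{*}\wt\varkappa$ as a primitive of $\Omega$ (it vanishes on $\gamma_0$ because $\wt\varkappa$ vanishes on $\wt\gamma_0$) and integrating over the cycles $\gamma_{H,F}$ and $\mu_\circ(H,F)$ used to define $I,I_\circ$ yields the values of $\wt I,\wt I_\circ$ on the image cycles, giving $I=\wt I\circ\phi$ and $I_\circ=\wt I_\circ\circ\phi$ once the orientations of the cycles are matched.

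For sufficiency, apply Proposition~\ref{prop:lift} to the given $\phi$ to obtain a real-analytic fiberwise diffeomorphism $\Phi_0:U(\gamma_0)\to\wt U(\wt\gamma_0)$ with $\wt{\mathcal F}\circ\Phi_0=\phi\circ\mathcal F$. The pulled-back form $\Omega_1:=\Phi_0^{*}\wt\Omega$ is then a symplectic structure on $U(\gamma_0)$ making $H,F$ Poisson commute and defining the same Lagrangian foliation as $\Omega$. Composing $\Phi_0$, if necessary, with the sign-flip maps $(x,y,\lambda,\varphi)\mapsto(-x,y,\lambda,\varphi)$ or $\varphi\mapsto-\varphi$ from Section~\ref{sec:3} (which do not alter the hypothesis on $\phi$), one arranges that $\Omega$ and $\Omega_1$ induce the same orientation and the same co-orientation of $\gamma_0$. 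Pushing the distinguished cycles $\gamma,\mu_\circ$ forward by $\Phi_0$, the assumption $I=\wt I\circ\phi$ and $I_\circ=\wt I_\circ\circ\phi$ translates into the equality of the actions of $\Omega$ and $\Omega_1$ on these cycles. Theorem~\ref{thm:5.4} then produces a fiber-preserving real-analytic symplectomorphism $\Phi_2:(U(\gamma_0),\Omega)\to(U(\gamma_0),\Omega_1)$, and the composition $\Phi=\Phi_0\circ\Phi_2$ is the desired fiberwise symplectomorphism.

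The delicate step is the sign and orientation bookkeeping in the sufficiency direction: one must verify that the normalisations of $I,I_\circ$ used in (ii) (vanishing at the cusp point, positivity of $I_\circ$ and negativity of $I$ on the swallow-tail domain) are exactly those needed to match the action-variable hypothesis of Theorem~\ref{thm:5.4} after the sign-flip adjustments of $\Phi_0$, and that $\Phi_0$ transports the relevant homology classes of $\gamma$ and of the small vanishing cycle $\mu_\circ$ on the narrow tori over the swallow-tail domain. This reduces to checking that the orientation of $\gamma_0$ as an $S^1$-orbit and its co-orientation from the bifurcation diagram are preserved by $\Phi_0$; once both sides of this bookkeeping are fixed, the remainder of the argument is formal.
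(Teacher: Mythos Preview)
Your proposal is correct and follows precisely the route the paper intends: the paper states Theorem~\ref{thm:5.6} immediately after the sentence ``Combining Theorem~\ref{thm:5.4} with Proposition~\ref{prop:lift} we obtain'' and closes it with a $\square$, giving no further argument. You have simply spelled out how that combination works, including the necessary orientation/co-orientation adjustments via the sign-flip maps, which is exactly the content the paper leaves implicit.
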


The latter conclusion basically means that the only symplectic invariants of hyperbolic orbits are {\it action variables}.  This conclusion does not provide any tools to decide whether a suitable map \eqref{eq:mapHF} (making the actions equal) exists or not, but some necessary conditions can be easily found.   Some of them have
been already described in Section \ref{sec:4},  e.g., the function $f(\cdot)$ from Proposition \ref{prop:3.3.11}.  This function is a symplectic invariant of a parabolic singularity which ``corresponds'' to the level $\lambda =0$, where
$\lambda$, as above, denotes the first action variable $I(H,F)$.  We now want to describe another non-trivial symplectic invariant which will be a function $h(\lambda)$, $\lambda <0$.

Since $\lambda=\lambda(H,F)$ is a real-analytic function, we can consider it as a parameter on the hyperbolic branch $\Sigma_{\mathrm{hyp}}$ of the bifurcation diagram $\Sigma$.   Consider  $I_\circ (H,\lambda)$ as a function of $H$ (with $\lambda$ as a parameter). This function is defined on the interval   
$$
\left(-2(-\lambda)^{3/2}/(3\sqrt3),2(-\lambda)^{3/2}/(3\sqrt3)\right),
$$
is strictly increasing from 0 to its maximum attained on the hyberbolic branch. We denote it by  $h(\lambda)=\max_H I_\circ (H,\lambda)$.  Obviously, $h(\lambda)$  does not depend on the choice of commuting functions $H$ and $F$ defining the Lagrangian fibration, so that  $h(\lambda)$ can be considered as a symplectic invariant of a parabolic singularity. 

The problem of an explicit description of a complete set of symplectic invariants
is equivalent,  as shown above, to the analysis of the asymptotics of the function $I_\circ (H,\lambda)$. More precisely,  we should describe invariants of such functions under (real-analytic) transformations of the form $(H,\lambda)  \mapsto \bigl( \widetilde H (H,\lambda), \widetilde \lambda = \lambda\bigr)$.


\section{Semi-local  invariants of cusp singularities} \label{sec:6}

Finally, we want to describe semi-local invariants of cusp singularities.  In other words, we now consider a saturated neighborhood of a compact singular fiber $\mathcal L_0$ containing a parabolic orbit, i.e., cuspidal torus.  We assume that this fiber  contains no other critical points, so that the topology of the fibration in a neighborhood of $\mathcal L_0$ is  standard and illustrated in Figure 3.   This Figure also shows the bifurcation complex, i.e., the base of this fibration, which consists of two  2-dimensional strata (attached to each other along $\Sigma_{\mathrm{hyp}}$,  one of the branches of the bifurcation diagram $\Sigma$ that corresponds to the family of hyperbolic orbits).  Each stratum represents a family of Liouville tori and therefore we can naturally assign a pair of action variables to each of them.  Our goal is to show that fibrations with the same actions are symplectomorphic.  

 In a neighborhood  $U(\mathcal L_0)$ of the singular fiber $\mathcal L_0$,  on all neighboring Liouville tori we can choose a natural basis of  cycles in the first homology group of $H_1(T^2_{F,H}, \mathbb Z)$ 
where $T^2_{H,F}$ is the Liouville torus defined by fixing the values of the integrals $F$ and $H$ respectively.   These cycles are shown in Figure 3.  One of them corresponds to the $S^1$-action defined on $U(\mathcal L_0)$  (in Figure 3, this cycle $\gamma$ is denoted by $S^1$).  The other cycle can be obtained by considering a global 3-dimensional cross-section to this $S^1$-action.  Since this $S^1$-action  (and the corresponding $S^1$-fibration) is topologically trivial, such a cross section exists.  It is illustrated on the left in Figure 3 and denoted by $V$  so that we may think of  $U(\mathcal L_0)$ as the direct product $V \times S^1 = U(\mathcal L_0)$.    Each Liouville torus  $T^2_{F,H}$ intersects $V$ along a closed curve  (these curves are shown in Figure 3) and this curve is taken as the second basis cycle $\mu$ in $H_1(T^2_{F,H}, \mathbb Z)$.  More precisely, we need to take into account that for a point $(F,H)$ from the swallow-tail zone,  we will have two disjoint Liouville tori.   The corresponding cycles will be denoted by $\mu$ and $\mu_\circ$, where $\mu_\circ$ is used for the vanishing cycle on the family of ``narrow'' tori, the other, i.e., $\mu$,  corresponds to a ``wide'' torus. 

Notice that the first cycle $\gamma$ is uniquely defined by the $S^1$-action. The cycle $\mu_\circ$ is also well defined by the topology of the fibration  (as a vanishing cycle). The other cycle $\mu$ is not.  It is easy to see that $\mu$ is defined up to the transformation of the form $\mu \mapsto \mu + k\gamma$,   $k\in \mathbb Z$.   This is caused by ambiguity in the choice of the cross-section $V$ which can be chosen in many homotopically different ways (this phenomenon is discussed  and explained in details in \cite{BolsFomBook}).  

Summarizing,  on each stratum of the bifurcation complex, we have a pair of action variables  $I_\gamma, I_\mu$  and $I_\gamma, I_{\mu_\circ}$ (the latter for the swallow-tail stratum).   Each of these functions can be treated as a real-analytic function of $H$ and $F$.  In fact,  we have already considered the actions $I_\gamma$ and $I_{\mu_\circ}$ in the previous Section \ref{sec:5}, where they were denoted by $I(H,F)$ and $I_\circ (H,F)$.   We will keep this notation here, i.e., we set  $I_\gamma = I$, $I_{\mu_\circ} = I_\circ$. The remaining action will be denoted by $I_\mu$ so that we have 3 action variables $I, I_\circ$ and $I_\mu$.  The first two of them are well-defined,  but $I_\mu$ is defined modulo transformation $I_\mu \mapsto I_\mu + k I$. 

Also notice that $I(H,F)$ is real-analytic everywhere  (strictly speaking we need to distinguish this action for the families of ``narrow'' and ``wide'' tori, but due to real-analyticity  $I(H,F)$, as function of $H$ and $F$,  is the same for the both families).  The function  $I_\mu (H, F)$  is defined and is real-analytic everywhere except for the hyperbolic branch $\Sigma_{\mathrm{hyp}}$ of the bifurcation diagram.  When approaching  $\Sigma_{\mathrm{hyp}}$ the function tends to certain finite limits, but these limits from above and from below are different.   The function $I_\circ$ is defined on the swallow-tail domain and is continuous on  its closure.

\vskip10pt


\begin{figure}[htbp]\label{fig3}
\begin{center}
\includegraphics[width=0.80\textwidth]{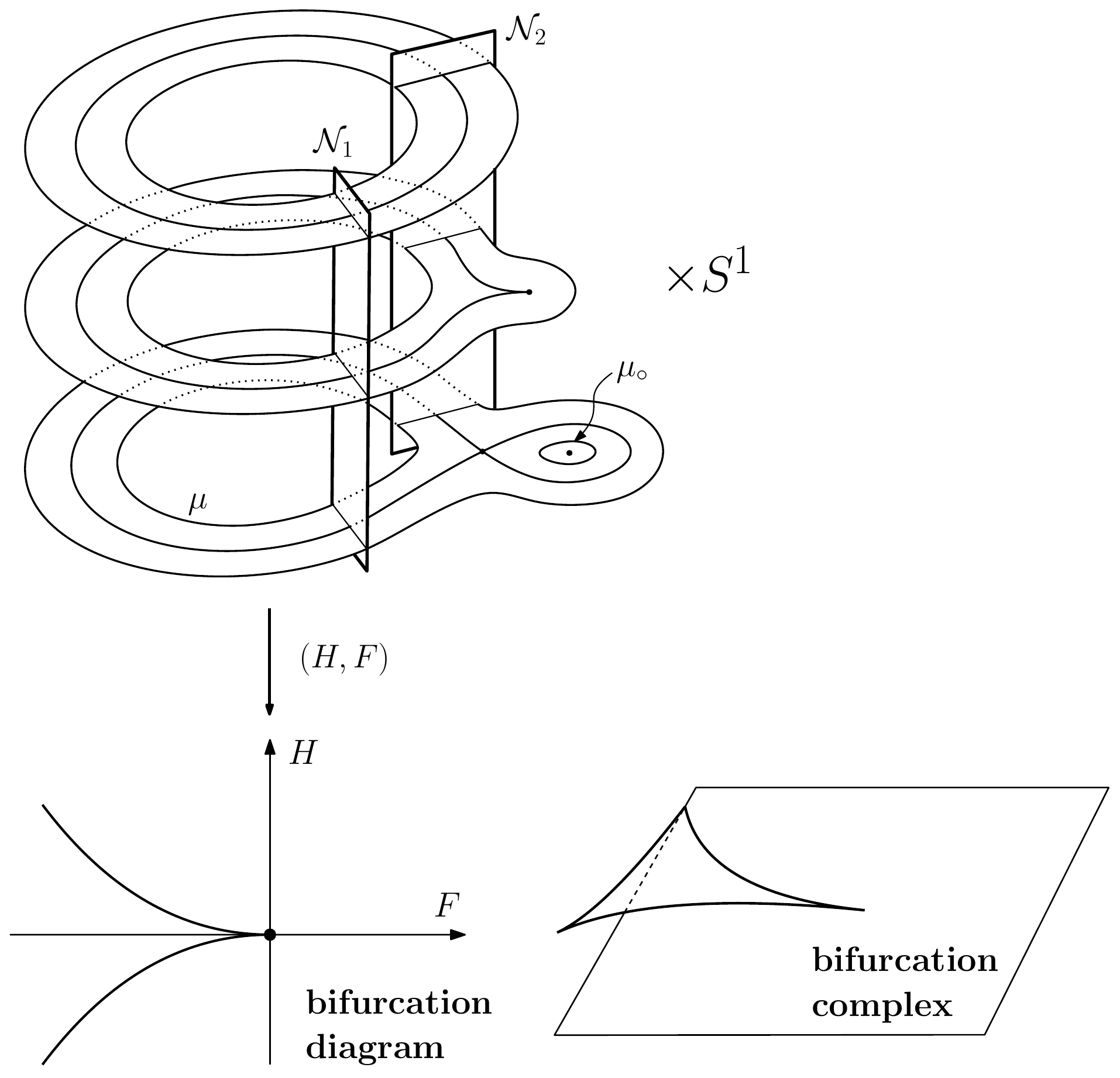}
\end{center}
\caption {Singular fibration near a cuspidal torus} \label {fig:3}
\end{figure}

Our final result basically states that the systems with equal actions are symplectomorphic.  We will give two versions of this result.
Consider two integrable Hamiltonian systems $\bigl(H,F,\Omega, U(\mathcal L_0)\bigr)$ and $\bigl(\widetilde H,\widetilde F,\widetilde \Omega, \widetilde U(\widetilde{\mathcal L}_0)\bigr)$ defined on some neighborhoods\footnote{We do not specify the sizes of these neighborhoods, but assume that they are sufficiently small. In other words, we are talking about germs of fibrations and germs of maps.} of cuspidal tori $\mathcal L_0$ and $\widetilde{\mathcal L}_0$.

\begin{thm}
\label{th:6.1}
Assume that there is a fiberwise diffeomorphism $\Psi : U(\mathcal L_0) \to \widetilde U(\widetilde{\mathcal L}_0)$ that preserves the actions in the sense that for every cycle $\tau \subset \mathcal L_{H,F}$ we have
$$
\oint_{\Psi(\tau)} \wt \varkappa = \oint_{\tau} \varkappa
$$ 
for some 1-forms $\varkappa$ and $\wt \varkappa$ satisfying $d\varkappa = \Omega$, $d\wt\varkappa=\wt\Omega$.
Then there exists a fiberwise symplectomorphism  $\Phi: U(\mathcal L_0)\to \widetilde U(\widetilde{\mathcal L}_0)$. \end{thm}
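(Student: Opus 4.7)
The plan is to reduce the problem to a parametric Moser interpolation on a single manifold with two symplectic forms, using Theorem \ref{thm:5.6} to handle the parabolic orbit $\gamma_0$ itself and then extending across the rest of the cuspidal torus. First I would apply Theorem \ref{thm:5.6} to the parabolic orbits $\gamma_0\subset\mathcal L_0$ and $\wt\gamma_0\subset\wt{\mathcal L}_0$ to obtain a fiberwise symplectomorphism $\Phi_0:U(\gamma_0)\to\wt U(\wt\gamma_0)$; the equal-actions hypothesis of the present theorem supplies, in particular, the base diffeomorphism $\phi$ needed as input for Theorem \ref{thm:5.6}. After composing the given $\Psi$ with suitable fiberwise (non-symplectic) diffeomorphisms of $\wt U(\wt{\mathcal L}_0)$ I would arrange that $\Psi$ coincides with $\Phi_0$ on a smaller tubular neighborhood of $\gamma_0$ while still preserving all actions; pulling back by this modified $\Psi$ turns the problem into one of finding a fiberwise symplectomorphism on $U(\mathcal L_0)$ between two real-analytic symplectic forms $\Omega_0:=\Omega$ and $\Omega_1:=\Psi^*\wt\Omega$, both of which make the fibration Lagrangian, induce the same orientation, coincide near $\gamma_0$, and have identical action integrals along every closed cycle on every Liouville torus.

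Next I would set $\alpha:=\Omega_1-\Omega_0$. This is a closed real-analytic 2-form on $U(\mathcal L_0)$ which vanishes near $\gamma_0$ and restricts to zero on every regular fiber (by the Lagrangian condition). The equal-actions hypothesis translates into the assertion that for every closed loop $\tau$ on any Liouville torus and any local primitive $\beta_{\mathrm{loc}}$ of $\alpha$ one has $\oint_\tau \beta_{\mathrm{loc}}=0$. This should allow the construction of a global real-analytic primitive $\beta$ of $\alpha$ on $U(\mathcal L_0)$ such that $\beta$ vanishes on all vectors tangent to fibers and $\beta\equiv 0$ near $\gamma_0$: on the regular stratum $\beta$ is written explicitly in action-angle coordinates as a combination of the $d\varphi_j$, and the formulas are patched across the singular locus using the real-analyticity of $I=I_\gamma$ in a full neighborhood of $\mathcal L_0$ together with the known behaviour of $I_\circ$ and $I_\mu$ at $\Sigma_{\mathrm{hyp}}$ and at the cusp.

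With $\beta$ in hand, a Moser argument will finish the proof: the family $\Omega_t:=\Omega_0+t\alpha$, $t\in[0,1]$, is symplectic (possibly after shrinking $U(\mathcal L_0)$, using that $\Omega_0$ and $\Omega_1$ induce the same orientation), and the time-dependent vector field $X_t$ determined by $i_{X_t}\Omega_t=-\beta$ is tangent to fibers because $\beta$ annihilates fiber-tangent vectors while the fibers are Lagrangian for $\Omega_t$. Its time-$1$ flow $\Phi$ is therefore a fiberwise diffeomorphism with $\Phi^*\Omega_1=\Omega_0$, equal to the identity near $\gamma_0$, so that $\Psi\circ\Phi:U(\mathcal L_0)\to\wt U(\wt{\mathcal L}_0)$ is the sought fiberwise symplectomorphism.

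The main obstacle is the global construction of the primitive $\beta$ in the second step. On the regular stratum and near $\gamma_0$ this is routine, but extending $\beta$ real-analytically across the hyperbolic branch $\Sigma_{\mathrm{hyp}}$ and through the cuspidal fiber $\mathcal L_0$ itself requires a careful analysis of how the three action variables $I$, $I_\circ$, $I_\mu$ interact across the singular locus; this is the point at which the full strength of the equal-actions hypothesis, rather than merely the parabolic-orbit version from Theorem \ref{thm:5.6}, is used.
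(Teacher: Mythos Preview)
Your first step—invoking Theorem \ref{thm:5.6} to obtain a local fiberwise symplectomorphism near the parabolic orbit—is exactly what the paper does. After that, however, your strategy diverges from the paper's, and the divergence creates a real difficulty.

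The modification step is the problem. You want to compose $\Psi$ with fiberwise diffeomorphisms of $\wt U(\wt{\mathcal L}_0)$ so that the result agrees with $\Phi_0$ on a tubular neighborhood of $\gamma_0$. In the real-analytic category (which is the setting of the paper) this cannot be done innocently: if two real-analytic maps on a connected domain coincide on an open set, they coincide everywhere. So either the modified $\Psi$ is already a global real-analytic extension of $\Phi_0$—which is precisely the object you are trying to construct—or you have to perform the gluing with bump functions, in which case $\Psi^*\wt\Omega$ is only smooth and the remaining argument (and the resulting $\Phi$) leaves the real-analytic category. The same issue resurfaces when you try to patch a primitive $\beta$ that is identically zero near $\gamma_0$ with one given by action-angle formulas on the regular stratum: a nonzero real-analytic $\beta$ cannot vanish on an open set.

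The paper sidesteps both issues by not pulling back to a single manifold and not running Moser. Instead, having produced $\Phi_{\mathrm{loc}}$ from Theorem \ref{thm:5.6}, it extends $\Phi_{\mathrm{loc}}$ directly via the explicit flow formula
\[
\Phi(y)=\wt\sigma^{(t_1,t_2)}\circ\Phi_{\mathrm{loc}}\circ\sigma^{(-t_1,-t_2)}(y),
\]
where $\sigma,\wt\sigma$ are the Hamiltonian $\R^2$-actions generated by $(H,F)$ and $(\wt H,\wt F)$, and $(t_1,t_2)$ is chosen so that $\sigma^{(-t_1,-t_2)}(y)$ lands in the domain of $\Phi_{\mathrm{loc}}$. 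This is automatically real-analytic as a composition of real-analytic maps; the only thing to check is that the value is independent of the choice of $(t_1,t_2)$. That reduces (Lemma \ref{lem:6.3}, Corollary \ref{cor:6.5}) to the statement that the period lattices of the two $\R^2$-actions on corresponding tori coincide, which is exactly what the equal-actions hypothesis guarantees. The extension across singular fibers is then immediate, since every point of $W=U(\mathcal L_0)\setminus U(\gamma_0)$ is regular and can be flowed into the domain of $\Phi_{\mathrm{loc}}$.

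So your Moser outline is a reasonable heuristic in the $C^\infty$ world, but as written it does not deliver a real-analytic symplectomorphism, and the ``main obstacle'' you flag is in fact bypassed entirely by the paper's flow-extension argument rather than confronted head-on.
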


\begin{rem}\label{rem:converse}
The converse statement is obviously true, since a fiberwise symplectomorphism $\Phi$ preserves the actions:
$$
\oint_{\Phi(\tau)} \wt \varkappa = \oint_{\tau} \varkappa
$$ 
where $\varkappa$ and $\wt \varkappa$ are related by $\Phi^*\wt \varkappa = \varkappa$.
\end{rem}

A stronger version is as follows.  
For each system we compute the actions $I(H,F)$, $I_\circ(H,F)$ and $I_\mu (H,F)$ and respectively  $\widetilde I(\widetilde H,\widetilde F)$, $\widetilde I_\circ(\widetilde H,\widetilde F)$ and $\widetilde I_{\widetilde \mu} (\widetilde H,\widetilde F)$  for the second system as explained above.

\begin{thm}
\label{th:6.2}
 Assume that there is a local real-analytic diffeomorphism  $\phi : (H,F) \mapsto (\wt H, \wt F)$,  $\wt H=\wt H(H,F)$ and $\wt F=\wt F(H,F)$
that 
\begin{itemize}
\item respects the bifurcation diagrams together with their partitions into hyperbolic and elliptic branches:
$$
\phi(\Sigma)=\wt\Sigma, \quad \mbox{moreover} \quad
\phi(\Sigma_{\mathrm{ell}})=\wt\Sigma_{\mathrm{ell}} \quad \mbox{and} \quad
\phi(\Sigma_{\mathrm{hyp}})=\wt\Sigma_{\mathrm{hyp}},
$$
\item 
 and makes the actions equal (for some choice of $\mu$ and $\widetilde\mu$):
$$
\begin{aligned}
I (H,F) &= \wt I (\wt H(H,F), \wt F(H,F)), \\
I_\circ (H,F) &= \wt I_\circ (\wt H(H,F), \wt F(H,F)) \ \mbox{and} \\
I_\mu (H,F) &= \wt I_{\widetilde \mu}  (\wt H(H,F), \wt F(H,F)).
\end{aligned}
$$
\end{itemize}
Then there exists a fiberwise symplectomorphism  $\Phi: U(\mathcal L_0)\to \widetilde U(\widetilde{\mathcal L}_0)$.
\end{thm}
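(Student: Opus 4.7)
The plan is to deduce Theorem \ref{th:6.2} from Theorem \ref{th:6.1}. Concretely, I want to construct a fiberwise real-analytic diffeomorphism $\Psi\colon U(\mathcal L_0)\to\widetilde U(\widetilde{\mathcal L}_0)$ that descends to $\phi$ on the bases and sends the distinguished basis of cycles $\{\gamma,\mu,\mu_\circ\}$ to $\{\widetilde\gamma,\widetilde\mu,\widetilde\mu_\circ\}$. Once such a $\Psi$ is in hand, the three equalities $I=\widetilde I\circ\phi$, $I_\circ=\widetilde I_\circ\circ\phi$, $I_\mu=\widetilde I_{\widetilde\mu}\circ\phi$ translate, by linearity in homology, into the cycle-integral equality $\oint_{\Psi(\tau)}\widetilde\varkappa=\oint_\tau\varkappa$ for \emph{every} closed 1-cycle $\tau$ in any fiber, where $\varkappa$ and $\widetilde\varkappa$ are primitives of $\Omega$ and $\widetilde\Omega$ (the normalization $\oint_{\gamma_0}\varkappa=\oint_{\gamma_0}\widetilde\varkappa=0$ can be arranged by subtracting exact 1-forms). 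Theorem \ref{th:6.1} then produces the required symplectomorphism $\Phi$.

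To build $\Psi$ I would combine several normal-form results. On a tubular neighborhood of the parabolic orbit, Proposition \ref{prop:lift} furnishes a lift of $\phi$. On tubular neighborhoods of the hyperbolic periodic orbits of $\Sigma_{\mathrm{hyp}}$, the standard fact that such orbits have no local smooth invariants supplies a lift, and over the regular part Arnold-Liouville provides action-angle coordinates. These local lifts can be glued to a globally-defined fiberwise real-analytic diffeomorphism because the smooth type of a cuspidal fibration is uniquely determined by its stratified base together with its partition into elliptic and hyperbolic branches (Lerman-Umanskii \cite{LermanUmanskii1,LermanUmanskii2}, Kalashnikov \cite{Kalashnikov}, Efstathiou \cite{Efstathiou}), and $\phi$ preserves exactly this structure by hypothesis.

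Next I would normalise $\Psi$ so that it matches the chosen bases in homology. The generator of the $S^1$-action is canonical up to sign, so after possibly composing with the orientation-reversing involution, $\Psi$ intertwines the two $S^1$-actions and hence $\Psi_*\gamma=\widetilde\gamma$; the correct sign is dictated by the sign conventions for $I$ and $\widetilde I$. The vanishing cycle $\mu_\circ$ is topologically distinguished (it is the cycle of the narrow Liouville tori that collapses on the elliptic branch), so $\Psi_*\mu_\circ=\widetilde\mu_\circ$ is automatic. The cycle $\mu$ is defined only modulo an integer multiple of $\gamma$, and this is precisely the freedom needed: if $\Psi_*\mu=\widetilde\mu+k\widetilde\gamma$, replacing $\mu$ by $\mu-k\gamma$ on one side yields $\Psi_*\mu=\widetilde\mu$, which is consistent with the hypothesis $I_\mu=\widetilde I_{\widetilde\mu}\circ\phi$ being required only for \emph{some} admissible choice of $\mu,\widetilde\mu$. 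Any residual discrepancy along fibers can then be killed by composing with a fiberwise translation along the $\R^2$-action (which is fiber-preserving).

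The main obstacle is the global consistency of the real-analytic lift across the hyperbolic stratum $\Sigma_{\mathrm{hyp}}$, where the torus bundle over the regular part has non-trivial monodromy and so $\mu$ is multi-valued. The key point is that the three actions $I, I_\circ, I_\mu$ together determine the integer affine structure on the base of the regular Lagrangian fibration (on both the wide-torus stratum and the swallow-tail stratum, consistently across $\Sigma_{\mathrm{hyp}}$), and the assumed equalities encode exactly the identification of these affine structures under $\phi$; hence the monodromy representations of both fibrations are intertwined by $\phi$, which is precisely the obstruction that must vanish in order for the local lifts to glue into the global $\Psi$ needed for Theorem \ref{th:6.1}.
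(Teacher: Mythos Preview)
Your plan goes in the opposite direction from the paper's, and the detour introduces a genuine gap.

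The paper does \emph{not} reduce Theorem~\ref{th:6.2} to Theorem~\ref{th:6.1}; logically it is the other way round (Theorem~\ref{th:6.1} is the special case where $\phi$ is the map on bases induced by the given $\Psi$). The paper's proof of Theorem~\ref{th:6.2} proceeds directly: Theorem~\ref{thm:5.6} (which needs only the two equalities $I=\widetilde I\circ\phi$ and $I_\circ=\widetilde I_\circ\circ\phi$) already produces a real-analytic fiberwise \emph{symplectomorphism} $\Phi_{\mathrm{loc}}$ between tubular neighborhoods of the parabolic orbits. One then \emph{extends} this local symplectomorphism to a full neighborhood of the cuspidal torus by the explicit Hamiltonian-flow formula
\[
\Phi(y)=\widetilde\sigma^{(t_1,t_2)}\circ\Phi_{\mathrm{loc}}\circ\sigma^{(-t_1,-t_2)}(y),
\]
which is well defined precisely when $\phi$ is an integer affine equivalence on the wide-torus stratum (Lemma~\ref{lem:6.3}, Corollary~\ref{cor:6.5}); this is where the third hypothesis $I_\mu=\widetilde I_{\widetilde\mu}\circ\phi$ is used. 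The extension to singular fibers is then automatic because the right-hand side remains real-analytic there. No global non-symplectic $\Psi$ is ever constructed or needed.

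Your route, by contrast, requires building a global real-analytic fiberwise diffeomorphism $\Psi$ by ``gluing'' local lifts coming from Proposition~\ref{prop:lift}, from hyperbolic normal forms, and from Arnold--Liouville charts. This is the gap: in the real-analytic category there is no partition-of-unity mechanism, so local real-analytic diffeomorphisms cannot simply be patched. The references you invoke (Lerman--Umanskii, Kalashnikov, Efstathiou) give $C^\infty$ classification, and the sentence ``the smooth type is uniquely determined'' does not supply a real-analytic $\Psi$. The paper sidesteps this entirely: the flow formula above is a single global real-analytic expression, so nothing needs to be glued. If you want to salvage your strategy, you would essentially be forced to rediscover this flow-extension argument to produce $\Psi$, at which point you may as well carry the symplectic structure along from the start, which is exactly what the paper does.

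Two smaller remarks: the normalization $\oint_{\gamma_0}\varkappa=0$ you invoke belongs to Theorem~\ref{thm:5.5}, not Theorem~\ref{th:6.1}; and ``composing with a fiberwise translation along the $\R^2$-action'' does not change homology classes of cycles, so it cannot be used to adjust $\Psi_*\mu$ as you suggest.
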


\begin{rem}\label{rem:converse2}
Notice that the converse statement is also true: a fiberwise symplectomorphism $\Phi:U(\mathcal L_0)\to \widetilde U(\widetilde{\mathcal L}_0)$ induces a diffeomorphism $\phi$ between the bases of the fibrations which automatically satisfies the properties above (where the choice of $\wt \mu$ is induced by $\Phi$ and $\mu$).
\end{rem}

\begin{rem}\label{rem:3}
{\rm We can rewrite this statement in a slightly different and shorter way.     For each of the above integrable systems we may consider the momentum map $\pi :  U(\mathcal L_0) \to B \subset \R^2(H, F)$ and $\pi :  \widetilde U(\widetilde{\mathcal L}_0) \to \widetilde B \subset \R^2(\widetilde H, \widetilde F)$,  where $B$ and $\widetilde B$ are some neighborhoods of the corresponding cusp points of the bifurcation diagrams.  Then we can think of the actions as functions on $B$ (more precisely on the corresponding domains defined by the bifurcation diagrams).   Then Theorem~\ref{th:6.2} can be rephrased as follows:

{\it Assume that there exists a local real-analytic diffeomorphism $\phi : B \to \widetilde B$ respecting the bifurcation diagrams $\Sigma$ and $\wt \Sigma$ and such that 
$I = \widetilde I\circ \phi$,  $I_\circ = \widetilde {I}_\circ \circ \phi$ and $I_\mu = \widetilde I_{\widetilde\mu} \circ \phi$. Then there exist a fiberwise symplectomorphism  $\Phi: U(\mathcal L_0)\to \widetilde U(\widetilde{\mathcal L}_0)$.}
}\end{rem}

The proof of this theorem is based on the following lemma. Consider two (non-singular) integrable systems $(H, F, \Omega)$ and $(\widetilde H, \widetilde F, \widetilde \Omega)$ defined in some neighborhoods  $T^2 \times B$ and  $\widetilde T^2\times \widetilde B$ of regular Liouville tori. Here $B$ and $\tilde B$ are 2-dimensional discs viewed as the bases of the corresponding (regular) Lagrangian fibrations  endowed with induced integer affine structures   (action variables).  The functions $(H,F)$ and $(\widetilde H, \widetilde F)$ are treated as smooth functions  on $B$ and $\widetilde B$ respectively. We also consider the Hamiltonian $\R^2$-actions  $\sigma^{(t_1, t_2)}$ and $\widetilde{\sigma}^{(t_1, t_2)}$, $(t_1, t_2)\in \R^2$ generated by the commuting functions $(H, F)$ and $(\tilde H, \tilde F)$.  Here $\sigma^{(t_1, t_2)}$ denotes the composition of the Hamiltonian shifts along vector fields $X_H$ and $X_F$ by time $t_1$ and time $t_2$ respectively.  Similarly for $\tilde\sigma^{(t_1, t_2)}$.

\begin{lemma}\label{lem:6.3}
Assume that we have a real-analytic diffeomorphism  
$$
\phi : B \to \tilde B, \quad  
$$  
which provides an (integer) affine equivalence between $B$ and $\widetilde B$.   Let   $H = \wt H \circ\phi$ and  $F = \wt F \circ\phi$   and consider two Liouville tori $T_p =T^2 \times \{p\}$   and  $\wt T_{\phi(p)} = \widetilde T^2 \times \{\phi (p)\}$ where $p\in B$ is an arbitrary point {\rm(}in other words, these tori correspond to each other under the map $\phi: B \to\widetilde B${\rm)}.    Let $x\in T_p$ and $\tilde x \in \widetilde T_{\phi(p)}$ be arbitrary two points from these fibers.   

Then $\sigma^{(t_1,t_2)} (x) = x$   {\rm(}or more generally $\sigma^{(t_1, t_2)}(x) = \sigma^{(t'_1, t'_2)}(x)${\rm)}  if and only if 
$\widetilde \sigma^{(t_1,t_2)} (\tilde x) = \tilde x$  {\rm(}respectively $\widetilde\sigma^{(t_1, t_2)}(\tilde x) = \widetilde\sigma^{(t'_1, t'_2)}(\tilde x)${\rm)}. 
\end{lemma}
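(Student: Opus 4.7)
The plan is to reduce the statement to an equality of period lattices of the two Hamiltonian $\R^2$-actions at corresponding Liouville tori. Since each $\R^2$-action is transitive on a single torus $T_p$, the condition $\sigma^{(t_1,t_2)}(x)=\sigma^{(t_1',t_2')}(x)$ for some $x \in T_p$ is equivalent to $(t_1-t_1',\,t_2-t_2') \in \Lambda_p$, where $\Lambda_p \subset \R^2$ is the isotropy (period) lattice of the action on $T_p$ and does not depend on the choice of $x$. Thus it suffices to prove that $\Lambda_p = \wt\Lambda_{\phi(p)}$ for every $p \in B$, and both parts of the lemma follow immediately.

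The next step is to compute $\Lambda_p$ in action-angle coordinates. Pick a basis $\gamma_1,\gamma_2$ of $H_1(T_p,\mathbb Z)$, let $I_1, I_2$ be the corresponding action variables on $B$, and $\theta_1,\theta_2$ the conjugate angles $\!\!\mod 2\pi$. In these coordinates $\Omega = dI_1 \wedge d\theta_1 + dI_2 \wedge d\theta_2$. Writing $H$ and $F$ as functions of $(I_1, I_2)$, one has
$$X_H = \sum_{j} \frac{\partial H}{\partial I_j}\, \partial_{\theta_j}, \qquad X_F = \sum_{j} \frac{\partial F}{\partial I_j}\, \partial_{\theta_j},$$
so the flow $\sigma^{(t_1,t_2)}$ translates $\theta_j$ by $t_1\,\partial H/\partial I_j + t_2\,\partial F/\partial I_j$ for $j=1,2$. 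Therefore
$$\Lambda_p = \left\{ (t_1,t_2)\in\R^2 \mid t_1 \tfrac{\partial H}{\partial I_j}(p) + t_2 \tfrac{\partial F}{\partial I_j}(p) \in 2\pi\mathbb Z \mbox{ for } j=1,2\right\},$$
which shows that $\Lambda_p$ is entirely determined by the germ of the map $(I_1, I_2) \mapsto (H,F)$ at $p$.

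I then invoke the hypotheses. The assumption that $\phi : B \to \wt B$ is an integer affine equivalence means that, after possibly replacing the basis of $H_1(T_{\phi(p)}, \mathbb Z)$ by one related via an element of $GL(2,\mathbb Z)$, we have $I_i = \wt I_i \circ \phi + \const$. Combined with $H = \wt H \circ \phi$ and $F = \wt F \circ \phi$, this gives the equality of Jacobians $\partial(H,F)/\partial(I_1,I_2)$ at $p$ and $\partial(\wt H,\wt F)/\partial(\wt I_1,\wt I_2)$ at $\phi(p)$, up to the $GL(2,\mathbb Z)$ change of basis in the $I$'s. Since $(2\pi\mathbb Z)^2$ is invariant under $GL(2,\mathbb Z)$, this gives $\Lambda_p = \wt\Lambda_{\phi(p)}$, as required.

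The only delicate point is the $GL(2,\mathbb Z) \ltimes \R^2$ ambiguity in the choice of action variables, but the definition of $\Lambda_p$ is visibly invariant under it: the $GL(2,\mathbb Z)$-freedom is absorbed by $GL(2,\mathbb Z)$-invariance of $(2\pi\mathbb Z)^2$, and the translational freedom (adding constants to $I_i$) does not affect the partial derivatives. Once this bookkeeping is done, the argument is a straightforward unwinding of the action-angle formalism.
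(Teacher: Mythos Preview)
Your proof is correct and follows essentially the same approach as the paper: both reduce the statement to the equality of period (isotropy) lattices $\Lambda_p=\wt\Lambda_{\phi(p)}$, compute these lattices via the Jacobian of $(H,F)$ with respect to action variables, and conclude from the affine-equivalence hypothesis that the relevant Jacobians agree. The only cosmetic differences are your explicit bookkeeping of the $GL(2,\mathbb Z)\ltimes\R^2$ ambiguity (the paper absorbs this into its definition of affine equivalence by choosing $I_i=\wt I_i\circ\phi$ from the outset) and the $2\pi$ normalisation of the angles.
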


\begin{proof}  We will give a proof of this statement in the case of $n$ degrees of freedom.   Recall that $B$ and $\widetilde B$ are endowed with integer affine structures induced by the action variables.  By definition,  $\phi: B \to \widetilde B$ is an  (integer)  affine equivalence if $\phi$ sends ``actions to actions''. More precisely,
let $\wt I_1, \dots, \wt I_n$  be action variables for $\tilde B$,  which means that these functions define the Hamiltonian action of the {\it standard} torus $\R^n/\Gamma_0$ where $\Gamma_0=\mathbb Z^n$ is the standard  integer lattice in $\R^n$. 
We say that $\phi :B \to \wt B$ is an affine equivalence, if $I_1=\wt I_1 \circ \phi, \dots , I_n=\wt I_n \circ \phi$ are action variables on $B$.

If in Lemma  \ref{lem:6.3} instead of $(H, F)$ and $(\widetilde H, \widetilde F)$ we consider $(I_1, I_2)$ and $(\wt I_1, \wt I_2)$, then the statement  is obvious:   both relations $\sigma^{(t_1, t_2)} (x) = x$ and $\wt \sigma^{(t_1, t_2)} (\wt x) = \wt x$  simply mean that $(t_1, t_2)$ belongs to the standard integer lattice, i.e., $t_1, t_2 \in \mathbb Z$.

Let us see what happens if take arbitrary functions  $(H, F)$  or, more generally, $(F_1, F_2, \dots F_n)$ in the case of $n$ degrees of freedom.  The relation $\sigma^{(t_1, \dots, t_n)} x = x$  means that  $(t_1,\dots, t_n)$ belongs to the period lattice $\Gamma \subset \R^n$ which is the stationary subgroup of $x$ in the sense of the Hamiltonian $\R^n$-action generated by $F_1, F_2, \dots F_n$.   Since this lattice is the same for any point $x$ from a fixed torus $T_p$, $p \in B$,  we may denote it by $\Gamma(T_p)$.   This lattice is not standard anymore and it depends on two things,  the torus  $T_p$  (or just a point $p\in B$) and the generators $F_1, F_2, \dots F_n$ of the Hamiltonian $\R^n$-action.

If we know the expressions of $F_1, \dots, F_n$ in terms of the actions $I_1,\dots, I_n$, then the lattice $\Gamma(T_p)$ is easy to describe. Namely: 
$$
\Gamma(T_p) =  \Gamma_0 \cdot J^{-1}(p),
$$
where $\Gamma_0$ is the standard integer lattice and $J(p)$ denotes the Jacobi matrix  $J(p) = \left(  J^i_j= \frac{\partial F_i}{\partial I_j}|_p\right)$.  In more details,
$$
(t_1, \dots , t_n) \in \Gamma(T_p) \quad \mbox{if and only if} \quad (t_1, \dots , t_n) = (k_1,\dots, k_n) \cdot J^{-1}(p)
$$
for $ (k_1,\dots, k_n)\in \Gamma_0$,  i.e., for some vector with integer components $k_i  \in \mathbb Z$. 

The same, of course, holds for $\wt x \in \wt T_{\phi(p)}$,  that is 
$$
(t_1, \dots , t_n) \in \Gamma(\wt T_{\phi(p)}) \quad \mbox{if and only if} \quad (t_1, \dots , t_n) = (k_1,\dots, k_n) \cdot \wt J^{-1}(\phi(p))
$$
where $\wt J(\phi(p)) = \left( \wt J^i_j= \frac{\partial \wt F_i}{\partial \wt I_j}|_{\phi(p)}\right)$.  It remains to notice that under our assumptions these matrices coincide.   The reason is obvious:  since $I_k = \wt I_k \circ \phi$ and also $F_i = \wt F_i \circ \phi$,  we see that 
$F_i = f_i(I_1,\dots,I_n)$ implies that $\wt F = f_i (\wt I_1, \dots , \wt I_n)$, i.e.,   $F_i$ depends on $I_1,\dots,I_n$  exactly in the same way as 
$\wt F_i$ depends on $\wt I_1,\dots, \wt I_n$ so that the corresponding partial derivatives (being computed at $p$ and $\phi(p)$, i.e., at those points for which $(I_1,\dots,I_n) = (\wt I_1,\dots, \wt I_n)$) obviously coincide. In other words, we have proved that $\Gamma(T_p) = \Gamma (\wt T_{\phi(p)})$, which is equivalent to our statement. \end{proof}

This lemma implies the following two extension results.

Under the assumptions and notation from Lemma \ref{lem:6.3}, assume that $N$ and $\widetilde N$ are Lagrangian (real-analytic) sections of the Lagrangian fibrations  $\pi: T^2 \times B \to B$ and $\widetilde \pi: \widetilde T^2 \times \widetilde B \to \widetilde B$ respectively.   Since the sections $N$ and $\widetilde N$ can be naturally identified with the bases $B$ and $\widetilde B$,  the map $\phi : B \to \widetilde B$  induces a natural map between $N$ and $\tilde N$ which we denote by the same letter $\phi : N \to \widetilde N$.  For any point $y\in T^2 \times B$ we can find  (not uniquely!)  $(t_1(y), t_2(y))\in \R^2$ such that $x=\sigma^{(t_1(y), t_2(y))}(y) \in N$.  Consider the map $\Phi: T^2 \times B \to \widetilde T^2\times \widetilde B$ defined by
$$
\Phi (y) = \wt \sigma^{(-t_1(y), -t_2(y))}(\phi(x)), \quad \mbox{where }  x=\sigma^{(t_1(y), t_2(y))}(y) \in N.
$$

\begin{cor} 
\label{cor:6.4}
The map $\Phi (y)$ is well defined  and is a fiber-wise real-analytic diffeomorphism  satisfying $\Phi^*(\widetilde\Omega) = \Omega$.
\end{cor}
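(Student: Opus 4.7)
The plan is to verify three assertions in order: well-definedness of $\Phi$, that it is a fiberwise real-analytic diffeomorphism, and the symplectic identity $\Phi^*\widetilde\Omega=\Omega$.

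For well-definedness, I would note that $N$ meets each fiber $T_p$ in a unique point $x=x(y)$, so only the vector $(t_1(y),t_2(y))\in\R^2$ with $\sigma^{(t_1,t_2)}(y)=x$ is ambiguous, and precisely modulo the period lattice $\Gamma(T_p)$. Since Lemma \ref{lem:6.3} gives $\Gamma(T_p)=\Gamma(\widetilde T_{\phi(p)})$, the image $\widetilde\sigma^{(-t_1,-t_2)}(\phi(x))$ does not depend on the choice of $(t_1,t_2)$. Locally the $(t_i(y))$ can be chosen real-analytic, hence $\Phi$ is real-analytic. It sends $T_p$ into $\widetilde T_{\phi(p)}$ because $\widetilde\sigma$ preserves fibers and $\phi(x)\in\widetilde T_{\phi(p)}$, and running the construction with the two systems swapped (using $\phi^{-1}$) produces a two-sided inverse, so $\Phi$ is a fiberwise diffeomorphism.

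The core of the argument is the symplectic identity, for which my plan is to introduce $N$-adapted local coordinates built from the $\R^2$-action itself. Fix $x_0\in N$ and parametrize a neighborhood of $x_0$ by $\Psi(t,x)=\sigma^{-t}(x)$ on a small $W\times V\subset\R^2\times N$; the differential of $\Psi$ at $(0,x_0)$ sends $\partial_{t_i}$ to $-X_{F_i}(x_0)$ and is the identity on $T_{x_0}N$, and since $N$ is Lagrangian and transverse to the fibers (on which $X_{F_1},X_{F_2}$ span the tangent space), $\Psi$ is a local real-analytic diffeomorphism. Perform the analogous construction on the other side with $\widetilde\Psi(t,\widetilde x)=\widetilde\sigma^{-t}(\widetilde x)$. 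In these coordinates the formula defining $\Phi$ collapses to $\Phi\circ\Psi(t,x)=\widetilde\Psi(t,\phi(x))$, i.e.\ $\Phi$ becomes the ``identity'' in the $t$-direction combined with $\phi$ on the base.

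A short computation (using $\{F_i,F_j\}=0$ for the $dt_i\wedge dt_j$ part, the identity $\iota_{X_{F_i}}\Omega=-dF_i$ together with the invariance of $F_i$ under $\sigma$ for the cross terms, and $N$ being Lagrangian for the pure $N$-tangent part) gives $\Psi^*\Omega=\sum_i dt_i\wedge dF_i$ with $F_i$ regarded as a function on $N$, and the analogous formula for $\widetilde\Psi^*\widetilde\Omega$. Since by hypothesis $F_i=\widetilde F_i\circ\phi$ on $N$, these two pullbacks agree under $(t,x)\mapsto(t,\phi(x))$, which is exactly $\Phi^*\widetilde\Omega=\Omega$ in the chart. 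The only non-trivial point I expect is the passage from this local identity to a global one on $T^2\times B$; that is precisely what the global well-definedness of $\Phi$, guaranteed by Lemma \ref{lem:6.3}, delivers for free, since the symplectic identity is a local statement and $\Phi$ has already been shown to be a single-valued real-analytic map.
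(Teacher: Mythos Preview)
Your proposal is correct and follows essentially the same route as the paper: well-definedness from Lemma~\ref{lem:6.3}, then the introduction of flow-based coordinates $(t_1,t_2,F_1,F_2)$ in which $\Omega$ takes the canonical form $\sum dt_i\wedge dF_i$ and $\Phi$ becomes the identity in $t$ combined with $\phi$ on the base. The only difference is presentational: you spell out the three-way decomposition of $\Psi^*\Omega$ (torus--torus, torus--section, section--section) using $\{F_i,F_j\}=0$, $\iota_{X_{F_i}}\Omega=-dF_i$, and the Lagrangian condition on $N$, whereas the paper simply asserts that $(H,F,t_1,t_2)$ is a canonical coordinate system and that $\Phi$ reads as the identity in it.
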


\begin{proof} The fact that $\Phi$ is well defined (i.e., does not depend on the choice of $(t_1, t_2)\in \R^2$  with the property $\sigma^{(t_1, t_2)}(y) \in N$) follows from Lemma \ref{lem:6.3}.  To show that $\Phi$ is symplectomorphism, i.e., $\Phi^*(\widetilde\Omega) = \Omega$,  we notice that the position of each point $y \in T^2 \times B$ is defined by the values of $H, F$   (which can be understood as coordinates on $B$) and $t_1, t_2$ (which can be understood as coordinates on the torus $T^2$ with the ``origin'' $(0,0)$ located on $N$).  These four functions define a canonical coordinate system, i.e.,
$$
\Omega = dH\wedge dt_1 + dF\wedge dt_2. 
$$
A similar canonical coordinate system  $\tilde H, \tilde F, \tilde t_1, \tilde t_2$ can be defined on $\wt T^2 \times \wt B$ by using the action $\wt\sigma$ and the Lagrangian section $\wt N$.  It remains to notice that our map $\Phi$ in these coordinate systems, by construction, takes the form
$\wt H = H$, $\wt F = F$, $\wt t_1=t_1$, $\wt t_2 = t_2$.  \end{proof}

Let $U\subset T^2 \times B$ be an open subset such that the intersection of $U$ with each fiber is connected and non-empty.  Let  $\Phi_{\mathrm{loc}} :  U \to \widetilde U$ be a real-analytic fiber-wise diffeomorphism with a certain open subset $\widetilde U \subset \widetilde T^2 \times \widetilde B$ such that $\Phi_{\mathrm{loc}}^*(\widetilde\Omega) = \Omega$.  Since $\Phi_{\mathrm{loc}}$ is fiberwise and $U$ intersects each fiber,   $\Phi_{\mathrm{loc}}$ induces a real-analytic map $\phi$ between the bases $B$ and $\widetilde B$.  

\begin{cor}\label{cor:6.5}
$\Phi_{\mathrm{loc}}$ can be extended  up to 
a real-analytic fiber-wise diffeomorphism  $\Phi: T^2 \times B \to \widetilde T^2 \times \widetilde B$ with the property $\Phi^*(\widetilde\Omega) = \Omega$   if and only $\phi: B \to \widetilde  B$ is an integer affine equivalence.  
\end{cor}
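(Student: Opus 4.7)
For the ``only if'' direction: given a global fiberwise symplectomorphism $\Phi$ extending $\Phi_{\mathrm{loc}}$, I pull back any primitive $\widetilde\varkappa$ of $\widetilde\Omega$; then $\Phi^*\widetilde\varkappa$ is a primitive of $\Omega$ differing from a fixed primitive $\varkappa$ by a closed 1-form whose periods $c_1,c_2$ on a fiber homology basis $\gamma_1,\gamma_2$ are constants (since $H^1(T^2\times B)=H^1(T^2)$ as $B$ is contractible). Being fiberwise, $\Phi$ induces a single element $M\in GL(2,\mathbb Z)$ on fiber homology, so integration along $\Phi_*^{-1}\widetilde\gamma_k=\sum_j M_k^j\gamma_j$ gives $\widetilde I_k\circ\phi=\sum_j M_k^j(I_j+c_j)$, which is exactly the condition that $\phi$ be an integer affine equivalence.

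For the ``if'' direction (the main content): I pass to action--angle coordinates. The hypothesis that $\phi$ is integer affine allows me to choose action variables $I=(I_1,I_2)$ on $B$ and $\widetilde I$ on $\widetilde B$ with $\widetilde I\circ\phi=I$, together with angle coordinates $\theta\in T^2$ and $\widetilde\theta\in\widetilde T^2$ coming from a choice of Lagrangian sections on each side. In these coordinates the symplectic forms are $\sum dI_j\wedge d\theta_j$ and $\sum d\widetilde I_j\wedge d\widetilde\theta_j$, and $\Phi_{\mathrm{loc}}$, being fiberwise with $\widetilde I=I$, has the form $(I,\theta)\mapsto (I, F(I,\theta))$ on $U$. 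Imposing $\Phi_{\mathrm{loc}}^*(\sum d\widetilde I_j\wedge d\widetilde\theta_j)=\sum dI_j\wedge d\theta_j$ forces $\partial F_j/\partial\theta_k=\delta_{jk}$ and $\partial F_j/\partial I_k=\partial F_k/\partial I_j$; integrating, $F(I,\theta)=\theta+g(I)$ for some $g=(g_1,g_2)$ with the 1-form $g_1\,dI_1+g_2\,dI_2$ closed.

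The hypotheses on $U$ are used next: surjectivity $\pi(U)=B$ and connectedness of $U\cap\pi^{-1}(p)$ make $g(p)\in\mathbb R^2/\mathbb Z^2$ unambiguous for every $p\in B$, yielding a globally real-analytic $g:B\to\mathbb R^2/\mathbb Z^2$, and simple connectedness of the 2-disc $B$ gives a single-valued lift $g=dS$ with $S:B\to\mathbb R$. I then define $\Phi(I,\theta):=(I,\theta+g(I))$ on all of $T^2\times B$; reversing the computation above shows $\Phi^*\widetilde\Omega=\Omega$, and by construction $\Phi|_U=\Phi_{\mathrm{loc}}$. The main obstacle, and the point where all hypotheses are used, is the normalization step: the integer-affine assumption on $\phi$ is needed precisely to enforce $\widetilde I=I$ (otherwise a non-integer Jacobian would enter the $\theta$-equations and obstruct descent to the tori), while the simple connectedness of $B$ is what turns the closed, locally defined shift into a globally single-valued lift.
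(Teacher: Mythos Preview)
Your proof is correct, but it takes a genuinely different route from the paper's.

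The paper's argument is dynamical: it propagates $\Phi_{\mathrm{loc}}$ along the Hamiltonian $\mathbb R^2$-actions $\sigma$ and $\widetilde\sigma$ generated by $(H,F)$ and $(\widetilde H,\widetilde F)$ via the explicit formula
\[
\Phi(y)=\widetilde\sigma^{(t_1,t_2)}\circ\Phi_{\mathrm{loc}}\circ\sigma^{(-t_1,-t_2)}(y),
\]
where $(t_1,t_2)$ is chosen so that $\sigma^{(-t_1,-t_2)}(y)\in U$. Uniqueness of the extension is observed first, and well-definedness of this formula is reduced to the coincidence of period lattices $\Gamma(T_p)=\Gamma(\widetilde T_{\phi(p)})$, which is exactly Lemma~\ref{lem:6.3}. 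The connectedness of $U\cap\pi^{-1}(p)$ enters when comparing two choices of $(t_1,t_2)$ via a continuous path inside $U$.

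Your argument instead passes to global action--angle coordinates on both sides (which exist because $B$, $\widetilde B$ are discs), normalises so that $\widetilde I=I$, and reads off from the symplectomorphism condition that $\Phi_{\mathrm{loc}}$ is a fibrewise translation $\theta\mapsto\theta+g(I)$; the extension is then the obvious global translation. This is more elementary and makes the structure of $\Phi$ completely explicit; it also bypasses Lemma~\ref{lem:6.3}, whose content is absorbed into the normalisation $\widetilde I=I$. A minor remark: your step ``$g=dS$'' is superfluous, since $\Phi(I,\theta)=(I,\theta+g(I))$ is already well defined with $g:B\to T^2$, and the symmetry $\partial g_j/\partial I_k=\partial g_k/\partial I_j$ (established on $\pi(U)=B$) is all that is needed for $\Phi^*\widetilde\Omega=\Omega$.

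What the paper's approach buys is that the flow formula makes sense even without global action--angle coordinates and carries over verbatim to neighbourhoods of singular fibers; this is precisely how it is reused in the proof of Theorem~\ref{th:6.2}. Your coordinate-based approach is cleaner for the regular statement at hand but would not extend as directly to the singular setting.
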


\begin{proof}   First of all we notice that such an extension (if it exists) is always unique.  Indeed,   since $\Phi$ is a symplectomorphism,  we have
$$
\Phi \circ \sigma^{(t_1, t_2)} =  \wt \sigma^{(t_1, t_2)} \circ \Phi,
$$
where $\sigma$ and $\wt \sigma$ are Hamiltonian $\R^2$-actions generated by $H,F$  and  $\wt H=H\circ \Phi^{-1}$, $\wt F= F\circ \Phi^{-1}$ respectively.   Therefore for any $y \in T^2 \times B$,   its image $\Phi (y)$ is uniquely defined by:
\begin{equation}
\label{eq:25}
\Phi(y) = \wt \sigma^{(t_1, t_2)} \circ \Phi_{\mathrm{loc}} \circ \sigma^{(-t_1, -t_2)} (y), 
\end{equation}
where $(t_1, t_2)$ are chosen in such a way that $\sigma^{(-t_1, -t_2)} (y)\in U$  (such $(t_1, t_2)\in\R^2$ exists as each orbit of the action $\sigma$ has a non-trivial intersection with $U$).   Moreover,  this formula can be understood as an explicit formula for the required extension.  
In a neighborhood of every point $y$,  the expression $\wt \sigma^{(t_1, t_2)} \circ \Phi_{\mathrm{loc}} \circ \sigma^{(-t_1, -t_2)}$  (with fixed $(t_1, t_2)$) is a composition of three real-analytic fiberwise symplectomorphisms.  So the only condition we need to check is that formula \eqref{eq:25} is well defined, i.e.,  does not depend on the choice of $(t_1, t_2)\in\R^2$. 

Assume that
$$
y = \sigma^{(t_1, t_2)} (x) = \sigma^{(t'_1, t'_2)} (x') \quad \mbox{with } x,x'\in U.  
$$

We need to check that
\begin{equation}
\label{eq:26}
\wt\sigma^{(t_1,t_2)} \circ \Phi_{\mathrm{loc}} \circ \sigma^{(-t_1,-t_2)} (y) = 
\wt\sigma^{(t'_1,t'_2)} \circ \Phi_{\mathrm{loc}} \circ \sigma^{(-t'_1,-t'_2)} (y)
\end{equation}
or, equivalently,
\begin{equation}
\label{eq:27}
\wt\sigma^{(t_1,t_2)} \circ \Phi_{\mathrm{loc}} (x) = 
\wt\sigma^{(t'_1,t'_2)} \circ \Phi_{\mathrm{loc}} (x').
\end{equation}
By our assumption,  the intersection of $U$ with each torus (interpreted now as an orbit of $\sigma$) is connected,   therefore there exists a continuous curve $(\varepsilon_1(s), \varepsilon_2(s))$,  $s\in [0,1]$ and  $\varepsilon_1(0)= \varepsilon_2(0)=0$ such that
$$
\sigma^{(\varepsilon_1(s), \varepsilon_2(s))} (x) \in  U \quad \mbox{for all } s\in[0,1] \quad \mbox{and} \quad \sigma^{(\varepsilon_1(1), \varepsilon_2(1))} (x)= x'.
$$
Since $\Phi_{\mathrm{loc}}$ is a fiberwise symplectomorphism, we have 
$$
\Phi_{\mathrm{loc}} \circ \sigma^{(\varepsilon_1(s), \varepsilon_2(s))} (x) =  \wt \sigma^{(\varepsilon_1(s), \varepsilon_2(s))} \circ \Phi_{\mathrm{loc}} (x)
$$
for any $s$ and, in particular,
$$
\Phi_{\mathrm{loc}} (x') =  \wt \sigma^{(\varepsilon_1(1), \varepsilon_2(1))} \circ \Phi_{\mathrm{loc}} (x).
$$

Hence \eqref{eq:27} can be rewritten as 
\begin{equation}
\label{eq:28}
\wt\sigma^{(t_1,t_2)} \bigl( \Phi_{\mathrm{loc}} (x)\bigr)=\wt\sigma^{(t'_1+\varepsilon_1(1),t'_2+\varepsilon_2(1))} \bigl( \Phi_{\mathrm{loc}} (x)\bigr).
\end{equation}

On the other hand,  since $ \sigma^{(t_1, t_2)} (x) = \sigma^{(t'_1, t'_2)} (x')$, we also have 
\begin{equation}
\label{eq:29}
\sigma^{(t_1, t_2)} (x) = \sigma^{(t'_1+\varepsilon_1(1), t'_2+\varepsilon_2(1))} (x).
\end{equation}

According to Lemma \ref{lem:6.3},   if $\phi$ is an affine equivalence then \eqref{eq:29} implies  \eqref{eq:28} and therefore \eqref{eq:26}, as needed.
 
 The necessity of  the condition that  $\phi: B \to \widetilde  B$ is an integer affine equivalence is obvious:   every fiberwise symplectomorphism induces an affine equivalence between $B$ and $\wt B$.
\end{proof}

We now use Corollary \ref{cor:6.5}  to prove Theorem \ref{th:6.2}. 

\begin{proof} First we apply Theorem \ref{thm:5.6} which guarantees the existence of a real-analytic fiberwise diffeomorphism $\Phi_{\mathrm{loc}}$ \footnote{In Theorem \ref{thm:5.6}, this map was denoted by $\Phi$.} between some neighborhoods of parabolic trajectories $\gamma_0 \subset \mathcal L_0$ and $\wt\gamma_0\subset \wt{\mathcal L}_0$.  We now need to extend $\Phi_{\mathrm{loc}}$ up to the desired fiberwise 
symplectomorphism $\Phi: U(\mathcal L_0)\to \widetilde U(\widetilde{\mathcal L}_0)$.  

According to Corollary \ref{cor:6.5} such an extension exists for all Liouville tori  (more precisely,  we only need to consider ``wide'' Liouville tori  because all ``narrow'' Liouville tori are already contained in the domain of $\Phi_{\mathrm{loc}}$).  Thus, it remains to explain why this map can be extended by continuity to each singular fiber.   

 On  Fig.~3 we can see the domain $U$ on which $\Phi_{\mathrm{loc}}$ is already defined and the complementary domain $W$ to which $\Phi_{\mathrm{loc}}$ should be extended.  Without loss of generality we may assume that  both domains are bounded by the sections $\mathcal N_1$ and $\mathcal N_2$.  Namely, $U$ is located to the right of $\mathcal N_1$ and $\mathcal N_2$ and contains all singular orbits including the parabolic one.  The complimentary domain $W$ is located to the left of $\mathcal N_1$ and $\mathcal N_2$ and contains no singularities at all.     
 
 Let $y \in W$ be an arbitrary point located on one of singular fibers and $V(y)$ be a sufficiently small neighborhood of $y$.   Then there exists $(t_1, t_2)\in \R^2$ such that $\sigma^{(-t_1, -t_2)}(V(y)) \subset U$ and we may apply our extension formula \eqref{eq:26} to define $\Phi$ on $V(y)$.    Obviously, this formula defines a real-analytic fiberwise (local) symplectomorphism from $V(y)$ to its image in $\widetilde U(\widetilde{\mathcal L}_0)$ and moreover, due to the uniqueness of such an extension, this map coincides with $\Phi$ that has been already defined on non-singular fibers (Liouville tori).   This is equivalent to saying that $\Phi$ can be naturally extended (by continuity) from Liouville tori to  all singular fibers. This completes the proof.  \end{proof}

\begin{rem}  Our final remark is that the statement of Theorem \ref{th:6.2} given in Remark  \ref{rem:3} can be also understood in terms of natural affine structures defined on $B$ and $\widetilde B$.   

A necessary and sufficient condition for the existence of a {\it semi-local} fiberwise symplectomorphism  $\Phi: U(\mathcal L_0) \to \wt U(\wt{\mathcal L}_0)$ between neighborhoods of two cuspidal tori $\mathcal L_0$ and $\wt{\mathcal L}_0$ is that the corresponding bases $B$ and $\widetilde B$ are {\it locally} equivalent as manifolds with singular integer affine structures.  Moreover, every affine equivalence $\phi:B \to \wt B$ can be lifted up to a fiberwise symplectomorphism $\Phi$. 

Thus, our paper gives a partial answer to Problem 27 from the collection \cite{BITOpenProb} of  open problems in the theory of finite-dimensional integrable systems.  
\end{rem}


\section{Appendix}


In this appendix we give a formal proof of the statement made in Remark \ref{rem:new1},  namely we prove the following

\begin{proposition}
\label{prop:ap1}
Let $P$ be a parabolic point of a momentum mapping $\mathcal F=(F,H): M^4 \to \R^2$ in the sense of Definition \ref{def:parabolic}  we   {\rm(}in particular,  $dF(P)\ne 0${\rm)} and 
\begin{equation}
\label{eq:transformation}
\wt H = \wt H (H,F), \quad \wt F = \wt F (H,F)
\end{equation} 
be a non-degenerate transformation such that $d\wt F(P) \ne 0$.   Then $P$ is still parabolic w.r.t. $\wt H$ and $\wt F$.
\end{proposition}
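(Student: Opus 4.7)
The plan is to reduce to canonical coordinates via Proposition~\ref{prop:2.1} and then verify the three conditions of Definition~\ref{def:parabolic} for $(\wt H,\wt F)$ by direct computation. Since parabolicity of $(\wt H,\wt F)$ is a statement about $(\wt H,\wt F)$ alone, I will first apply Proposition~\ref{prop:2.1} to the given parabolic system $(H,F)$ to choose local coordinates $(x,y,\lambda,\varphi)$ around $P = 0$ in which $H = x^2 + y^3 + \lambda y$ and $F = \lambda$. The transformation provided by Proposition~\ref{prop:2.1} is itself non-degenerate and composes with $(\wt H(H,F),\wt F(H,F))$ without destroying non-degeneracy of the Jacobian or the condition $d\wt F(P)\ne 0$, so I may assume without loss of generality that $H,F$ are already in this canonical form.

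In this setup $dH|_P = 0$, $dF|_P = d\lambda|_P$, and $k = 0$. Denoting partial derivatives of $\wt H,\wt F$ at $(H(P),F(P)) = (0,0)$ without subscripts, $d\wt F(P)\ne 0$ forces $\wt F_F\ne 0$, and $\wt k = \wt H_F/\wt F_F$ is determined by $d\wt H(P) = \wt k\, d\wt F(P)$. Put $G(H,F) := \wt H - \wt k\,\wt F$; then $G_F = 0$ while $G_H = (\wt H_H \wt F_F - \wt H_F \wt F_H)/\wt F_F \ne 0$ by the non-degeneracy hypothesis. A chain-rule expansion of $\Hess(G(H,F))$ at $P$, using $dH|_P = 0$ and $\Hess(F)|_P = 0$, collapses to
\[
\Hess(\wt H - \wt k \wt F)|_P \;=\; G_H\,\Hess(H)|_P \;+\; G_{FF}\, d\lambda\otimes d\lambda,
\]
whose matrix in the basis $(dx,dy,d\lambda,d\varphi)$ has upper-left $3{\times}3$ block of determinant $-2G_H^3\ne 0$. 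This gives rank $3$ and verifies (iii).

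For (i), the key observation is that $T_P\{\wt F = \wt F(P)\} = \ker d\wt F|_P = \ker d\lambda|_P = T_P\{F = F(P)\}$, because $d\wt F|_P$ is a non-zero scalar multiple of $d\lambda$. Restricting the Hessian above to $\{d\lambda = 0\}$ annihilates the $d\lambda\otimes d\lambda$ term and leaves $2G_H\, dx^2$, of rank $1$; since $P$ is critical for $G$ on $M$ and $\wt H_0$ differs from $G|_{\{\wt F = \wt F(P)\}}$ by an additive constant, this restriction coincides with $\Hess(\wt H_0)|_P$. For (ii), the kernel contains $v = \partial_y$; I will parametrize the curve $\gamma(t) = (0,t,\lambda(t),0)$ on $\{\wt F = \wt F(P)\}$ by the implicit relation $\wt F(t^3 + t\lambda(t),\lambda(t)) = \wt F(0,0)$, from which implicit differentiation gives $\lambda(t) = -(\wt F_H/\wt F_F)\,t^3 + O(t^4)$; substitution then yields $\wt H_0(\gamma(t)) = \wt H(0,0) + G_H\, t^3 + O(t^4)$, so $v^3 \wt H_0 = 6G_H \ne 0$. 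The main obstacle is the bookkeeping in this last step: one must verify that the cubic coefficient is exactly $G_H = \wt H_H - \wt k \wt F_H$, arising from combining the direct contribution $\wt H_H t^3$ with the cancelling contribution $-\wt k \wt F_H t^3$ coming from the $t^3$-term of $\lambda(t)$. Once this is made explicit, the single inequality $G_H\ne 0$, which is precisely the non-degeneracy of the transformation, controls all three conditions simultaneously.
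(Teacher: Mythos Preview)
Your proof is correct and the computations check out cleanly: the single quantity $G_H = (\wt H_H \wt F_F - \wt H_F \wt F_H)/\wt F_F$ indeed governs all three conditions simultaneously, and your implicit-function analysis of the curve $\gamma(t)$ on $\{\wt F=\wt F(P)\}$ gives the cubic term $G_H t^3$ exactly as you claim.

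Your approach, however, is genuinely different from the paper's. The paper proves Proposition~\ref{prop:ap1} in a self-contained, coordinate-free way: it first reduces to $dH(P)=0$, then shows via Taylor expansion (Lemma~\ref{lem:ap1}) that $d^2H|_{\ker d\mathcal F}$ and $d^2\wt H|_{\ker d\mathcal F}$ are proportional, handles condition~(iii) by a matrix-algebra argument (Lemma~\ref{lem:ap2}) using formula~\eqref{eq:seconddiff}, and treats condition~(ii) by separately varying $F$ with $H$ fixed and then $H$ with $F$ fixed. You instead invoke Proposition~\ref{prop:2.1} to pass to explicit canonical coordinates and compute directly. This is shorter and more concrete, and it makes transparent that the non-degeneracy of the Jacobian is the single inequality driving everything.

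One point you should address explicitly: the paper's presentation of Proposition~\ref{prop:2.1} cites Remark~\ref{rem:new1} (i.e.\ Proposition~\ref{prop:ap1} itself) in the paragraph preceding its statement, to justify that $P$ remains parabolic after the preliminary transformation of the form~\eqref{eq:transf1}. Read literally, this makes your reduction circular. The fix is easy --- either note that the proof of Proposition~\ref{prop:2.1} really goes through Proposition~\ref{prop:lift} applied to $(H,F)$ and the canonical model (both independently parabolic), so the intermediate parabolicity claim is not actually needed; or base your reduction on Lemma~\ref{lem:HF} alone, which gives $H=\pm(x^2+y^3+b(\lambda)y+a(\lambda))$, $F=\lambda$ without any appeal to Proposition~\ref{prop:ap1}. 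In the latter case your computation still works with minor changes (the $3\times 3$ determinant in~(iii) becomes $-2G_H^3\,b'(0)^2$, and the curve analysis for~(ii) picks up harmless $a,b$ terms). You should flag whichever route you take.
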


\begin{proof}

First of all, we notice that in Definition \ref{def:parabolic}  we can replace $H$ by $H-\mathrm{const} F$ and, in particular,   by $H - kF$  where $k\in \R$ is chosen in such a way that $d(H-kF)=0$.  In other words, without loss of generality we may assume that $dH(P)=0$   and similarly for $\wt H$.  
Under this additional assumption, the quadratic differential $d^2 H(P)$ makes sense on the whole tangent space $T_PM^4$. Taking into account that the tangent space to the hypersurface $\{ F = F(P)\}$ coincides with the kernel of the differential $d\mathcal F$  (here we use the fact that $\ker d\mathcal F (P) = \ker dF(P) = T_P\{ F = F(P)\}$), we can reformute the first condition (i) as follows:

(i)  the rank of the restriction $d^2 H (P)|_{\ker d\mathcal F}$ equals 1.

The advantage of such a reformulation is that now this condition does not depend on the choice of $F$ at all.   Therefore to verify the invariance of Condition (i) w.r.t. transformation \eqref{eq:transformation}, it is sufficient to prove 

\begin{lemma}
\label{lem:ap1}
Let $\wt H = \wt H(H, F)$  and  $d\wt H (P) = 0$,  then the forms $d^2 H (P)|_{\ker d\mathcal F}$ and $d^2 \wt H (P)|_{\ker d\mathcal F}$ are proportional with a non-zero factor.
\end{lemma}

\begin{proof}
It is sufficient to compare the Taylor expansions of  $H$ and $\tilde H$  at the point $P$ up to second order terms.   Let 
$$
\Delta \wt H \simeq  a_1 \Delta H + a_2 \Delta F + a_{11} \Delta H^2 + 2 a_{12} \Delta H \Delta F  +  a_{22} \Delta F^2 \dots 
$$
Since $d\wt H(P) = dH(P) = 0$, we conclude that $a_2 = 0$ and $a_1\ne 0$ and obtain:
\begin{equation}
\label{eq:TaylorH}
\Delta \wt H \simeq \frac{a_1}{2} d^2 H (\Delta x, \Delta x) + a_{22} \left( dF(\Delta x)\right) ^2 + \dots
\end{equation}
(all the other terms are of order $\ge 3$ and we omit them) or equivalently:
\begin{equation}
\label{eq:seconddiff}
d^2 \wt H = a_1 d^2 H  + 2a_{22}  dF \otimes dF
\end{equation}
(the differentials and second differentials are taken at the point $P$). Restricting to $\ker dF = \ker d\mathcal F$,  we get $d^2 \wt H (P)|_{\ker d\mathcal F} = a_1 d^2 \wt H (P)|_{\ker d\mathcal F}$ with $a_1\ne 0$, as required.
\end{proof}


Suppose  that Condition (i) holds.  Then the invariance of Condition (iii) w.r.t. transformation \eqref{eq:transformation}  amounts  to the following 

\begin{lemma}
\label{lem:ap2}
 Let $\wt H = \wt H(H, F)$, $d\wt H (P) = 0$ and $\rank (d^2 H)|_{\ker d\mathcal F} = 1$.  Then the conditions  $\rank d^2 H (P) = 3$ and $\rank d^2 \wt H (P) = 3$ are equivalent.
\end{lemma}

\begin{proof}
It is sufficient to use formula \eqref{eq:seconddiff}, namely:
$$
d^2 \wt H = a_1 d^2 H  + 2 a_{22}  dF \otimes dF.
$$ 
In general, these two forms $d^2 \wt H$  and $d^2 H$ do not necessarily have the same rank,  but under the condition that $\rank (d^2 H)|_{\ker d\mathcal F} = 1$   (using $\ker d\mathcal F = \ker dF$), the statements  $\rank d^2 \wt H =3$ and $\rank d^2  H =3$  become equivalent (simple exercise in Matrix Algebra). Indeed,  if we choose a basis  $e_1, e_2, e_3, e_4$ in such a way that $e_1, e_2, e_3$ span $\ker dF$ and $e_2, e_3$ span $\ker d^2 H|_{\ker dF}$ and, in addition, $dF(e_4)=1$ we will see  that in matrix terms, the above formula \eqref{eq:seconddiff} can be rewritten as
$$
d^2 H = \begin{pmatrix} \alpha & 0 & 0 & \beta \\
0 & 0 & 0 & \gamma \\
0 & 0 & 0 & \delta \\
\beta & \gamma & \delta & \lambda
\end{pmatrix},  \qquad d^2 \wt H = a_1  \begin{pmatrix} \alpha & 0 & 0 & \beta \\
0 & 0 & 0 & \gamma \\
0 & 0 & 0 & \delta \\
\beta & \gamma & \delta & \lambda 
\end{pmatrix} + 2a_{22}\begin{pmatrix}
0 & 0 & 0  & 0 \\
0 & 0 & 0 & 0 \\
0 & 0 & 0 & 0 \\
0 & 0 & 0 & 1
\end{pmatrix}.
$$
Now it is easy to see that both statements $\rank d^2 H = 3$ and $\rank d^2 \wt H = 3$ are equivalent to the condition $(\gamma,\delta)\ne (0,0)$, which completes the proof. \end{proof}

Finally, we need to verify the invariance of Condition (ii) w.r.t. transformation \eqref{eq:transformation}.
We first show that this condition does not change if we change $F$.

Consider a new function  $\wt F = \wt F ( F , H)$.   Since scaling $F\mapsto \mathrm{const}\cdot F$ does not affect the surface $\{ F = F(P)\}$, we may assume that  $\frac{\partial \wt F} {\partial F}|_{\mathcal F(P)} = 1$.    

According to the definition of $v^3 H_0$ we need to differentiate the same function $H$ but along two different curves  $\gamma(t) \subset \{ F = F(P)\}$ and $\tilde \gamma (t)\subset \{ \wt F = \wt F(P)\}$.

Let us choose local coordinates $x_1, \dots, x_n$ on $M$ in such a way that $x_1 = F$  and $P=(0,\dots, 0)$. It is easy to see that if we set $\tilde x_1 = \wt F$, then still 
$\tilde x_1, x_2, \dots , x_n$ is a good coordinate system. Moreover, the Jacobi matrix of the corresponding transformation at $P$ is the identity.

In coordinates $x_1, \dots, x_n$, the curve $\gamma(t)$ can be defined as  $\gamma(t) = (0, x_2(t), \dots, x_n(t))$.  The curve $\tilde \gamma(t)$ in the same coordinate system will be defined as 
$$
\tilde \gamma (t) = (x_1(t),  x_2(t), \dots, x_n(t)) = \gamma(t) + (x_1(t), 0, \dots, 0)
$$
(all functions are the same except for  $x_1(t)$ which should be chosen in such a way that  $\wt F = 0$ along the curve.  In other words, $x_1$ can be found as a function of the other variables $x_2,\dots, x_n$ from the implicit relation
$$
0=\wt F ( F, H)=\wt  F \bigl(x_1,  H(x_1, x_2,\dots, x_n)\bigr) \quad  \Leftrightarrow \quad  x_1=g (x_2,\dots, x_n)  
$$  
and correspondingly $x_1(t) = g\bigl(x_2(t), \dots, x_n(t)\bigr)$.

Our goal is to show that 
\begin{equation}
\label{eq:2}
\frac{d^3}{dt^3} |_{t=0} H (\tilde \gamma (t)) = \frac{d^3}{dt^3} |_{t=0} H (\gamma (t)).
\end{equation}

It is easy to see that the Taylor expansion of  $x_1=g (x_2,\dots, x_n)$ starts with quadratic terms.  Indeed, if   (w.l.o.g. we assume that  $F(P)=0$ and $H(P)=0$  so that $\Delta F = F$ and $\Delta H = H$)
$$
\wt F (F, H) =   F + b_2 H + b_{11} F^2 + 2 b_{12} FH + b_{22} H^2 + \dots
$$
then we need to resolve the equation (with respect to $x_1$)
\begin{equation}
\label{eq:1}
0= x_1 + b_2 H + b_{11} x_1^2 + \dots =  x_1 + b_2  \frac{1}{2} \sum_{i,j=1}^n \frac{\partial^2 H}{\partial x_i\partial x_j} x_ix_j + b_{11} x_1^2 +\dots  
\end{equation}

These quadratic terms are sufficient to reconstruct the quadratic terms of the Taylor expansion of the function $x_1=g(x_2,\dots, x_n)$.   This can be done by using the implicit function theorem, but we can also use the substitution
$$
x_1 = g(x_2, \dots, x_n) = \sum_{i,j=2}^n  c_{ij} x_i x_j + \dots
$$
into \eqref{eq:1}.  If we collect  (after substitution) all quadratic terms we obtain:
$$
0 = \sum_{i,j=2}^n  c_{ij} x_i x_j + b_2  \frac{1}{2} \sum_{i,j=2}^n \frac{\partial^2 H}{\partial x_i\partial x_j} x_ix_j  + \dots
$$
which means that up to a constant factor the quadratic expansions of $g$ and $H_0$ coincide:
$$
g(x_2,\dots, x_n) = - \frac{b_2}{2}  \sum_{i,j=2}^n \frac{\partial^2 H}{\partial x_i\partial x_j} x_ix_j + \dots
$$
or
\begin{equation}
\label{eq:3}
d^2 g (\xi, \xi) = - b_2 d^2 H (\xi, \xi)  \quad \mbox{for any $\xi \in \ker dF=\ker d\mathcal F$.}
\end{equation}

Now we are ready to verify \eqref{eq:2}. We have:
$$
\frac{d^3}{dt^3} |_{t=0} H (\tilde \gamma (t))  = 
d^3H (\tilde\gamma', \tilde\gamma',\tilde\gamma') + 3d^2H (\tilde \gamma',\tilde \gamma'') + dH (\tilde \gamma'''),
$$
and 
$$
\frac{d^3}{dt^3} |_{t=0} H (\gamma (t))  = 
d^3H (\gamma', \gamma',\gamma') + 3d^2H(\gamma',\gamma '') + dH(\gamma''').
$$

Since $\gamma' = \tilde\gamma' = v$ and $dH (P) = 0$, we only need to compare the middle terms. In the second relation this term vanishes because  $\gamma'' \in \ker dF$ (as the first component of $\gamma(t)$ identically vanishes) and $\gamma'$ belongs to the kernel of $(d^2H)|_{\ker dF}$.

Consider the difference between $d^2H (\tilde \gamma',\tilde \gamma'')$ and $d^2H ( \gamma', \gamma'')$.  Since $\tilde \gamma' = \gamma'$, we have 
$$
d^2H (\tilde \gamma',\tilde \gamma'') - d^2H ( \gamma', \gamma'') = d^2 H (\gamma', (\tilde \gamma - \gamma)'').
$$
But $\tilde\gamma - \gamma = (g(x_2(t), \dots, x_n(t)), 0, \dots, 0)$ so that for the potentially non-zero component of $\tilde\gamma - \gamma$ we get
$$
\frac{d^2}{dt^2}|_{t=0} g(x_2(t), \dots, x_n(t)) =  d^2 g (\gamma', \gamma') = \mbox{(see \eqref{eq:3})} = - b_2 \, d^2 H (\gamma',\gamma').
$$
It remains to notice that $d^2 H (\gamma',\gamma') = 0$ as $v=\gamma' \in \ker (d^2 H)|_{\ker d\mathcal F}$. Thus, $(\tilde \gamma - \gamma)''=0$.

Thus, we have shown that condition (iii) does not depend on the choice of $F$ (keeping $H$ fixed). The last step is to show that  (iii) does not depend on the choice of $H$  (keeping $F$ fixed).  

Again we use the Taylor expansion \eqref{eq:TaylorH}, but now up to third order terms
$$
\Delta \wt H = a_1 \Delta H +  2 a_{12} \Delta H \Delta F + a_{22} (\Delta F)^2 + a_{222}(\Delta F) + \dots, \quad a_1\ne 0.
$$
We do not need other terms  (like $(\Delta H)^2$ for example)  as in local coordinates $\Delta H$ starts with quadratic terms.  This formula shows that under the condition $\Delta F = 0$,  the Taylor expansions of $H$ and $\wt H$  (up to cubic terms) are proportional with a non-zero factor.  In particular for any curve $\gamma(t)$  lying on the surface $\{ F = F(P)\} = \{ \Delta F = 0\}$ we have 
$$
\frac{d^3}{dt^3}|_{t=0} \wt H(\gamma(t)) = a_1  \frac{d^3}{dt^3}|_{t=0} H(\gamma(t)) \quad \mbox{or equivalently} \quad
v^3 \wt H_0 = a_1  v^3  H_0,
$$
as needed.   This shows that (ii)  is invariant under transformations \eqref{eq:transformation} completing the proof of Proposition \ref{prop:ap1}.
\end{proof}

We also want to explain one important phenomenon mentioned in the proof of Proposition  \ref{prop:5.1}: for hyperbolic points, this coefficient in front of logarithm is known to be proportional to the period of the second (invisible in the real setting) cycle on the complex leaf $\mathcal L_{H,\lambda}$.  More rigorously, this statement can be formulated as follows.

Consider an analytic integrable system with one degree of freedom with the Hamiltonian of the form  $H=xy$ and symplectic structure $\omega = f(x,y) dx\wedge dy$.

Thinking of $x$ and $y$ as real variables, consider one-parameter family of curves  
$$
\gamma_H = \{ xy = H, \ 0<x\le 1, \ 0<y \le 1\}
$$
and the function   (cf. Section \ref{sec:4})
$$
\Pi (H) = \int_{\gamma_H} \frac {\omega}{dH}\, .
$$

It can be easily checked by an explicit computation that this function has the following asymptotics at zero:
$$
\Pi(H) = a(H) \ln H + b(H),
$$
where $a(H)$ and $b(H)$ are both real-analytic in a neighborhood of zero. Similar to Section \ref{sec:4}, here we integrate along a trajectory of the Hamiltonian flow between two sections $N_1 = \{y=1\}$ and $N_2 = \{x=1\}$.    If we change these sections, then $b(H)$ changes too whereas $a(H)$ remains the same so that $a(H)$ has an invariant meaning, i.e., does not depend on the choice of local coordinates $(x,y)$.

On the other hand if we think of $x$ and $y$ as complex variables, then the level $\{ xy = H\}$, from the real viewpoint, is a surface locally homeomorphic to a cylinder  which contains a non-trivial cycle of the form
$$
\hat \gamma_H =\{ x(t) = H e^{it}, \  y(t) = e^{-it} \},
$$  
so that we can introduce another function 
$$
\hat \Pi (H) = \int_{\hat \gamma_H} \frac {\omega}{dH}.
$$
This function is analytic in $H$.

The relation between $\Pi (H)$ and $\hat \Pi (H)$ is given by the following

\begin{proposition}
\label{prop:ap2}
We have $a(H) = \pm \frac{1}{2\pi i} \hat \Pi (H) $  or equivalently,
$$
\Pi (H) =  \pm \frac{1}{2\pi i} \hat \Pi (H) \ln H  + b(H)
$$
with $b(H)$ being analytic\footnote{The sign $\pm$ reflects the fact that both $\Pi (H)$ and $\hat \Pi (H)$ depend on the choice of orientations on the curves $\gamma_H$ and $\hat\gamma_H$.}.
\end{proposition}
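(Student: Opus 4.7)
The plan is to compute both integrals via an explicit power-series expansion of $f$ and match the coefficient of $\ln H$ in $\Pi(H)$ against $\hat\Pi(H)$ term by term. The first step is to identify the Gelfand-Leray form. On the level set $\{xy = H\}$ we have $dH = y\,dx + x\,dy$, so the $1$-form $\eta := -f(x,y)\,dx/x$ (equivalently $f(x,y)\,dy/y$) satisfies $dH \wedge \eta = f\,dx\wedge dy = \omega$; hence $\omega/dH$ is the restriction of $\eta$ to $\{xy = H\}$. Write $f(x,y) = \sum_{m,n\ge 0} a_{mn} x^m y^n$, convergent in a polydisc around the origin.

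For the complex cycle, parametrize $\hat\gamma_H$ by $x = He^{it}$, $y = e^{-it}$, $t\in[0,2\pi]$; then $dx/x = i\,dt$ and only diagonal terms $m=n$ survive the angular integration:
\begin{equation*}
\hat\Pi(H) \;=\; -i\int_0^{2\pi}\sum_{m,n} a_{mn}\,H^m\, e^{i(m-n)t}\,dt \;=\; -2\pi i \sum_{m\ge 0} a_{mm}\,H^m.
\end{equation*}
For the real cycle, parametrize $\gamma_H$ by $x\in[H,1]$ with $y=H/x$, so that
\begin{equation*}
\Pi(H) \;=\; -\int_H^1 f(x,H/x)\,\frac{dx}{x} \;=\; -\sum_{m,n} a_{mn}\,H^n\,\int_H^1 x^{m-n-1}\,dx.
\end{equation*}
The inner integral equals $(1-H^{m-n})/(m-n)$ for $m\ne n$ (contributing a polynomial in $H$) and $-\ln H$ for $m=n$ (contributing the logarithm). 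Collecting terms,
\begin{equation*}
\Pi(H) \;=\; \Bigl(\sum_{m\ge 0} a_{mm}\,H^m\Bigr)\ln H \;+\; b(H), \qquad b(H) \;:=\; \sum_{m\ne n} a_{mn}\,\frac{H^m - H^n}{m-n},
\end{equation*}
where $b(H)$ is a convergent real-analytic series near $H=0$ by a routine majorant estimate using $|H^m - H^n|/|m-n|\le |H|^{\min(m,n)}(1+|H|^{|m-n|})$.

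Comparing the two displayed formulas yields $a(H) = -\hat\Pi(H)/(2\pi i)$, which is the claimed identity up to the $\pm$ sign that is absorbed by orientation choices for $\gamma_H$ and $\hat\gamma_H$. The only real subtlety is bookkeeping of signs and orientation conventions; the computation itself is elementary once the Gelfand-Leray form is identified. Conceptually, the identity is a Picard-Lefschetz statement: the monodromy $H\mapsto e^{2\pi i}H$ transports the real chain $\gamma_H$ into a chain differing from $\gamma_H$ by $\pm \hat\gamma_H$, and integrating the holomorphic form $\eta$ over this difference produces $\pm \hat\Pi(H)$, which is precisely the coefficient of the logarithmic branch of $\Pi(H)$.
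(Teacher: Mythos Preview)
Your proof is correct, but it follows a genuinely different route from the paper's. The paper invokes the isochore Morse lemma of Colin de Verdi\`ere and Vey \cite{CVey} to find local coordinates in which $H=xy$ and $\omega = g(H)\,dx\wedge dy$, i.e., the density depends only on $H$. In those coordinates both $\Pi$ and $\hat\Pi$ are one-line computations: $\Pi(H) = -g(H)\ln H$ and $\hat\Pi(H) = -2\pi i\, g(H)$, and the result follows (the change of sections induced by the coordinate change only alters the analytic part $b(H)$, which the paper has already noted is non-invariant). Your approach instead keeps the general density $f(x,y)=\sum a_{mn}x^m y^n$ and computes both integrals directly, discovering by hand that only the diagonal coefficients $a_{mm}$ contribute to $\hat\Pi$ and to the log coefficient of $\Pi$. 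This is more elementary and self-contained --- you avoid citing an external normal-form lemma --- at the cost of a small convergence estimate for $b(H)$. Your closing Picard--Lefschetz remark is exactly the ``monodromy argument'' the paper alludes to but does not spell out. One minor point: your majorant bound is stated a bit loosely; the cleanest way to justify analyticity of $b$ is to note that for $|H|<1$ one has $|H^m-H^n|/|m-n|\le |H|^{\min(m,n)}$, and then use $|a_{mn}|\le C R^{-m-n}$ with $R>1$ (legitimate since $f$ is analytic past the unit square where the sections live).
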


\begin{proof}
One can proof this fact by using monodromy arguments  (which is nice and conceptual), but we will use a well known ``isochore Morse lemma'' \cite{CVey}  which allows to get this result by an explicit computation:  there exists a local coordinate system such that
$$
H = xy \qquad \mbox{and} \qquad \omega = f(H) dx\wedge dy.
$$

The form $\dfrac{\omega}{dH}$  can be replaced by the form $ f(H) y^{-1} dy$ and we obtain:
$$
\hat \Pi (H) = \int_{\hat \gamma_H} \frac {\omega}{dH} = \int_{\hat \gamma_H} f(H) y^{-1} dy = f(H) \int_0^{2\pi}  ( e^{-it} )^{-1} d ( e^{-it} ) =
-2\pi  i  f(H)
$$

On the other hand,   $\gamma_H$ can be parametrised as  $y=t$,  $x=H t^{-1}$, $t\in [H, 1]$ and we get:
$$
\Pi (H) = \int_{\gamma_H} \frac {\omega}{dH} = \int_{\gamma_H}  f(H) y^{-1} dy = f(H) \int_H^1 \frac{dt}{t} = - f(H) \ln H. 
$$

Comparing these formulas for $\hat \Pi (H) $ and $\Pi (H)$  gives the required result.  \end{proof}

\bibliographystyle{plain}
\nocite{*} 
\bibliography{references}

\end{document}